\documentclass{article}
\usepackage{graphicx} %

\usepackage{latexsym,amssymb,amsmath,amsfonts,amsthm}
\numberwithin{equation}{section}
\usepackage{enotez,hyperref,float}
\usepackage{forest}
\usepackage{thmtools, thm-restate}

\usetikzlibrary{arrows.meta, positioning}
\usetikzlibrary{decorations.markings}

\usepackage{color}
\usepackage{epsfig}
\usepackage{enumerate}

\usepackage{svg}

\newtheorem{theorem}{Theorem}

\newtheorem{corollary}[theorem]{Corollary}

\newtheorem{lemma}{Lemma}

\graphicspath{ {./assets/} }

\newtheorem{proposition}{Proposition}
\newtheorem*{proposition*}{Proposition}
\newtheorem{remark}{Remark}

\newcommand{\EE}{{\mathbb{E}}}

\DeclareMathOperator*{\argmin}{arg\,min}

\newcommand{\Cov}{\mathrm{Cov}}

\newcommand{\gsimas}[2]{\overset{{#1}\to {#2}}{\gtrsim}}
\newcommand{\lsimas}[2]{\overset{{#1}\to {#2}}{\lesssim}}
\newcommand{\simas}[2]{\overset{{#1}\to {#2}}{\sim}}
\newcommand{\ggas}[2]{\overset{{#1}\to {#2}}{\gg}}
\newcommand{\llas}[2]{\overset{{#1}\to {#2}}{\ll}}
\newcommand{\ordas}[2]{\overset{{#1}\to {#2}}{\asymp}}

\def\beq{ \begin{equation} }
\def\eeq{ \end{equation} }

\def\square{\vcenter{\vbox{\hrule height .4pt
  \hbox{\vrule width .4pt height 5pt \kern 5pt
        \vrule width .4pt} \hrule height .4pt}}}

\def\sqz{\kern-0.2em}

\usepackage{graphicx}
\graphicspath{%
    {converted_graphics/}%
    {/}%
}

\usepackage[
backend=biber,
style=numeric,
sorting=none
]{biblatex}
\usepackage{authblk}

\title{A Central Limit Theorem for the Driver Site Frequency Spectrum}
\author[1,2,3]{Armaan Ahmed}
\author[4]{Jasmine Foo}
\author[5]{Kevin Leder}

\affil[1]{Department of Applied Mathematics \& Statistics, Johns Hopkins University}
\affil[2]{Department of Mathematics, Johns Hopkins University}
\affil[3]{Department of Statistics, University of California, Berkeley}
\affil[4]{Department of Mathematics, University of Minnesota}
\affil[5]{Department of Industrial and Systems Engineering, University of Minnesota}

\usepackage{nicematrix}

\usepackage{lineno}

\providecommand{\classification}[2]
{
  \small	
  \textbf{\textit{#1---}} #2
}

\begin{document}

\maketitle
\begin{abstract}
  We consider a supercritical two-type continuous-time linear birth-death process with mutation and selection, in which wild-type individuals give rise to mutant offspring with a larger net-growth rate. We previously had obtained first-order law of large number results for the driver site frequency spectrum. We return to obtain a second-order central limit theorem, in which we show stable convergence of a scaling of the deviation of the SFS from its first-order limit. We provide a central limit theorem at both large detection times and sizes. Then, we apply these results to obtain confidence intervals for a class of consistent estimators for the fitness advance.
\end{abstract}

\classification{Keywords}{Site Frequency Spectrum, Birth-Death Process, Central Limit Theorem, Cancer Evolution}

\classification{MSC2020}{Primary: 60J80. Secondary: 92D10, 92D25, 60F05}

\section{Introduction}

Cells can accumulate mutations in a sequential manner, through which it becomes cancerous \cite{bielas2006,hanahan2022}. One way to summarize the data concerning the frequency of these mutations is through the site frequency spectrum (denoted $(S_j(t))_{j=1}^\infty$). $(S_j(t))_{j=1}^\infty$ is a random vector with the $j$th coordinate denoting the number of mutations present in $j$ cells at time $t$. These mutations can either be deleterious, neutral, or beneficial. Here, we consider the setting of driver (i.e. beneficial) mutations which is relevant in studying the development of cancer, as in our previous work \cite{ahmed2026}.

Specifically, the model we consider is a two-type continuous-time linear birth-death process, wherein type-0 cells can mutate to type-1 cells with a fitness increase in its birth rate. Under this model, we had previously obtained a law of large numbers concerning $S_j(t)$ and an associated estimator for the fitness increase of the type-1 clone over the type-1 clone \cite{ahmed2026}. Here, we provide second-order asymptotics, or a central limit theorem, for the SFS and use it to construct confidence intervals for our estimator.

Prior works have shown first-order-type results for the SFS under deleterious \cite{cheek2018}, neutral \cite{durrett2013,bozic2016,williams2016,cheek2018,gunnarsson2025,schweinsberg2025}, or selective/rescue \cite{cheek2018,bonnet2024,ahmed2026} evolution. Second-order results are somewhat harder to come-by. By exploiting recently-developed coalescent tools for a sample taken from a Galton-Watson process \cite{lambert2018,harris2020}, Schweinsberg and Shuai \cite{schweinsberg2025} have derived a central limit theorem for a sample of the SFS in the neutral mutation setting. They use a backwards-in-time approach exploiting the coalescent structure of a sample taken from a Galton-Watson tree \cite{lambert2018}, whereas in this study we use a different forward-in-time approach to obtain a central limit theorem.

\subsection{Outline}

In Section \ref{sec:clt}, we prove one of our main results concerning the convergence of the scaled deviations of $S_j(t)$. We also examine the large time deviations when $j=j(t)$ depends on $t$. In Section \ref{sec:guass_process}, we provide a law of iterated logarithm result for this Gaussian process. We extend the CLT when detecting at detection sizes in Section \ref{sec:clt_detection}. Finally in Section \ref{sec:estimator_ci}, we provide an application of our results to constructing confidence intervals for an estimator for the fitness advance constructed in our previous work \cite{ahmed2026}. Most proofs are deferred to Section \ref{sec:proofs}.

\section{Model Description} 
Consider a continuous-time Markov branching process with two cell types, denoted type-0 and type-1.
Each type-0 cell independently divides at exponential rate $a$ and dies at rate $d$,  and the total type-0 population count at time $t$ is denoted $Z_0(t)$. Each type-1 cell  divides at rate $a+b$ and die at rate $d$, where $b>0$ reflects a selective advantage. The total type-1 population count at time $t$ is denoted $Z_1(t)$.
Type-1 cells are created via driver mutation acquisition. The total population is denoted $Z(t)=Z_0(t)+Z_1(t)$ 

Type-0 cells produce type-1 cells through two mechanisms, (i) {\it replication-dependent:} at each type-0 division, precisely one offspring is type-1 with probability $v$ and type-0 with probability $1-v$\footnote{Here, we are making a simplification and ignoring the possibility that both type-0 offspring mutate to type-1 cells. This scenario occurs with probability $\mathcal{O}(v^2)$.}, and (ii) {\it replication-independent:} each type-0 cell mutates to type-1 at rate $\mu$, independently of division \cite{abascal2021}. Thus, the effective birth rate of type-0 cells is $a(1-v)$ and the net growth rate is $\lambda_0 \equiv a(1-v)-d-\mu$. The net growth rate of type-1 cells is $\lambda_1 \equiv a+b-d$. Throughout, we assume supercriticality of the type-$0$ population, namely $\lambda_0>0$. The non-extinction event of the type-0 population is denoted $\Omega_0^\infty=\{\forall t:Z_0(t)>0\}$. It is well known that $\mathbb{P}(\Omega_0^\infty):=q_0=1-p_0$, where $p_0=(d+\mu)/(a(1-v))$. The probability of extinction of the type-1 population, conditional on starting with a single type-1 cell, is $p_1=d/(a+b)$. We also let $q_1=1-p_1$ denote its probability of non-extinction.

We denote the transition probabilities of a single type-1 clone by $P_j(t)=\mathbb{P}(Z_1(t)=j|Z_1(0)=i)$.
Let $S_j(t)$ denote the number of type-1 driver clones of size $j$ at time $t$; we refer to $S_j(t)$  as the `driver SFS.' Specifically, letting $Z_1^{(i)}(s)$ denote the size of the $i$th type-1 clone $s$ units after it is initiated:
\[S_j(t)=\sum_{i:T_i\leq t}\mathbf{1}_{Z_1^{(i)}(t-T_i)=j}\]
We also denote $M_t=\#\{i:T_i\leq t\}$ as the number of mutation events from type-$0$ to type-$1$ that have occurred in the interval $[0,t]$. We explore the SFS at both large detection times as well as large detection times $\tau_n=\inf\{t\geq 0:Z(t)\geq n\}$.

It is well-known that $\lim_{t\to\infty}e^{-\lambda_0 t}Z_0(t)$ converges almost surely and in $L^2$ to a limiting random variable $Y$.  The limiting random variable $Y$ is 0 with probability $p_0$, and on the event of survival it is exponential with rate $q_0$ (for example see Theorem 1 in \cite{durrettbranching2015}). Also, $W=\lim_{t\to\infty}e^{-\lambda_1 t}Z(t)=\lim_{t\to\infty}e^{-\lambda_1}Z_1(t)$ almost surely (see Lemma \ref{lem:W_form}).

We will be showing stable convergence with respect to certain $\sigma$-algebras, and so we make reference to certain important $\sigma$-filtrations

\begin{itemize}
    \item Let $\mathcal{F}_t^{\leq i}$ for $i\in \{0,1\}$ denote the smallest $\sigma$-algebra generated either (if $i=0$) by the type-0 process in the time interval $[0,t]$ or (if $i=1$) by the type-0 and type-1 process in the time interval $[0,t]$. The $i=0$ case is most often used, so that we also write $\mathcal{F}_t:=\mathcal{F}_t^{\leq 0}$. We also let $\mathcal{F}_\infty^{\leq i}=\sigma\left(\bigcup_{t\geq 0}\mathcal{F}_t^{\leq i}\right)$. Once again, for shorthand we write $\mathcal{F}_\infty:=\mathcal{F}_\infty^{\leq 0}$.
    \item Let $\mathcal{B}_t^{\leq i}$ for $i\in \{0,1\}$ denote the smallest $\sigma$-algebra containing $\mathcal{F}_t^{\leq i}$ as well as the timings of the type-1 mutation times that occur within the interval $[0,t]$. We also write $\mathcal{B}_t:=\mathcal{B}_t^{\leq0}$. We also let $\mathcal{B}_\infty^{\leq i}=\sigma\left(\bigcup_{t\geq 0}\mathcal{B}_t^{\leq i}\right)$. Once again, for shorthand we write $\mathcal{B}_\infty:=\mathcal{B}_\infty^{\leq 0}$.
    \item Let $\mathcal{I}_t$ be the smallest $\sigma$-algebra containing $\mathcal{B}_t^{\leq 1}$ as well as the identities of the clones to which each type-1 cell belongs to. Finally, define $\mathcal{I}_\infty=\sigma\left(\bigcup_{t\geq 0}\mathcal{I}_t\right)$.
\end{itemize}
Clearly, $\mathcal{F}_t^{\leq i}\subseteq \mathcal{B}_t^{\leq i}\subseteq \mathcal{I}_t$. $(Z_0(t),Z_1(t))$ is adapted to $\mathcal{F}_t^{\leq 1}$. $(Z_0(t),Z_1^B(t))$ and $S_j(t)$ are adapted to $\mathcal{I}_t$. For a sub-$\sigma$-algebra $\mathcal{G}$ of our ambient probability space, $\mathcal{G}$-stable convergence of $\{Q_t\}_t$ to $Q_\infty$ means that for all bounded $\mathcal{G}$-measurable $Q'$ and continuous, bounded $f$:
\begin{align*}
    \mathbb{E}[Q'f(Q_t)]\to \mathbb{E}[Q'f(Q_{\infty})].
\end{align*}

Letting $\ell\in \mathbb{R}\cup \{\pm \infty\}$ and $g,h:\mathbb{R}\to \mathbb{R}_{\geq 0}$, we also use the following asymptotic notation:

\begin{align*}
    g\simas{t}{\ell}h&\Longleftrightarrow \lim_{t\to\ell}\frac{g(t)}{h(t)}=1\\
    g\lsimas{t}{\ell} h&\Longleftrightarrow \limsup_{t\to\ell}\frac{g(t)}{h(t)}<\infty\\
    g\gsimas{t}{\ell} h&\Longleftrightarrow h\lsimas{t}{\ell} g\\
    g\llas{t}{\ell} h&\Longleftrightarrow \lim_{t\to \ell}\frac{g(t)}{h(t)}=0\\
    g\ggas{t}{\ell} h&\Longleftrightarrow h\llas{t}{\ell} g\\
    g\ordas{t}{\ell} h&\Longleftrightarrow g\lsimas{t}{\ell} h\text{ and }g\gsimas{t}{\ell} h
\end{align*}

Additionally, the Landau-O notation is used to specify in-line error terms. That is, letting $g:\mathbb{R}\to \mathbb{R}$ and $h:\mathbb{R}\to \mathbb{R}_{\geq 0}$ be real-functions, we say:
\begin{align*}
    g=\mathcal{O}_{t\to \ell}(h)&\Longleftrightarrow |g|\lsimas{t}{\ell}h,\\
    g=o_{t\to \ell}(h)&\Longleftrightarrow |g|\llas{t}{\ell}h.
\end{align*}

\section{Results}
\subsection{Central Limit Theorem of the SFS}\label{sec:clt}
Our first result is a weak convergence result for the net of centered and scaled SFS vectors in the large time limit. This result is strengthened to establish $\mathcal{I}_\infty$-stable convergence in order to later prove a weak convergence result in the large detection size limit.
\begin{restatable}{theorem}{CLTProduct}\label{thm:clt}
  Let:
    \[X_{t,j}=e^{-\lambda_0 t/2}\left(S_j(t)-(av+\mu)Y\int_0^t P_j(t-s)e^{\lambda_0 s}\,ds\right).\]
    Then:
    \begin{enumerate}
        \item \begin{align*}
        (X_{t,j})_{j=1}^\infty|\Omega_0^\infty\overset{t\to\infty}{\Rightarrow}(X_{\infty,j})_{j=1}^\infty|\Omega_0^\infty
    \end{align*}
    $\mathcal{I}_\infty$-stably, treating $(X_{t,j})_{j=1}^\infty\in \ell^p$ for $t\in [0,\infty]$ and $p\in [2,\infty]$. 
    Conditional on $Y$, $(X_{\infty,j})_{j=1}^\infty$ is a mean zero Gaussian process with covariance function $K(j,k|Y)=\mathbb{E}[X_{\infty,j}X_{\infty,k}|Y]$ (where the Gaussian process is independent of $\mathcal{I}_\infty$). The conditional covariance function is given by $K(j,k|Y)=Y\sigma_{j,k}$, with $\sigma_{j,k}$ defined as in Lemma \ref{eq:sigmajk_def}. Asymptotically if $j\neq k$:
    \begin{align}
        \sigma_{j,k}=\frac{C}{jk}\left(\frac{1}{(j+k)^{\lambda_0/\lambda_1}}+\mathcal{O}_{j+k\to \infty}(1/(j+k)^{\lambda_0/\lambda_1+1})\right)\label{eq:sigma_asymptotic}
    \end{align} 
    and along the diagonal
    \begin{align}
        \sigma_{j,j}=\frac{C_2}{j^{\lambda_0/\lambda_1+1}}+\mathcal{O}_{j\to\infty}\left(\frac{1}{j^{\lambda_0/\lambda_1+2}}\right)
    \end{align}
    for explicitly determinable constants $C,C_2$.
        \item\begin{align}
    \{X_{t,\geq 1}\}\cup (X_{t,j})_{j=1}^\infty|\Omega_0^\infty\overset{t\to\infty}{\Rightarrow}\{X_{\infty,\geq 1}\}\cup (X_{\infty,j})_{j=1}^\infty|\Omega_0^\infty\label{eq:extended_clt}
\end{align}
    $\mathcal{I}_\infty$-stably, in $\ell^p$ for $p\in [2,\infty]$. The limit vector, conditional on $Y$, is a 0-mean Gaussian process with covariance kernel $\mathbb{E}[X_{\infty,i}X_{\infty,j}|Y]=Y\sigma_{i,j}$, $\mathbb{E}[X_{\infty,i}X_{\infty,\geq j}|Y]=Y\sum_{k=j}^\infty\sigma_{i,k}$, and $\mathbb{E}[X_{\infty,\geq i}X_{\infty,\geq j}|Y]=Y\sum_{k_1=i,k_2=j}\sigma_{k_1,k_2}$.
    \end{enumerate}
        
\end{restatable}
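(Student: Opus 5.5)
The plan is to split the fluctuation into two pieces, adapted to the filtrations $\mathcal{B}_\infty\subseteq\mathcal{I}_\infty$:
\begin{align*}
X_{t,j}&=e^{-\lambda_0 t/2}\Big(S_j(t)-\sum_{i:T_i\le t}P_j(t-T_i)\Big)\\
&\quad+e^{-\lambda_0 t/2}\Big(\sum_{i:T_i\le t}P_j(t-T_i)-(av+\mu)Y\int_0^tP_j(t-s)e^{\lambda_0 s}\,ds\Big)=:A_{t,j}+B_{t,j}.
\end{align*}

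\textbf{Clone term $A_{t,j}$.} Conditional on $\mathcal{B}_\infty$, the clones $Z_1^{(i)}$ are independent type-1 birth-death processes. So $A_{t,j}$ is a sum of $M_t$ independent centered bounded terms $\mathbf 1_{Z_1^{(i)}(t-T_i)=j}-P_j(t-T_i)$. Its conditional covariance is
\[
e^{-\lambda_0 t}\sum_{i}\big(P_j(t-T_i)\delta_{jk}-P_j(t-T_i)P_k(t-T_i)\big).
\]
Using $e^{-\lambda_0 t}\sum_i f(t-T_i)\to (av+\mu)Y\int_0^\infty f(u)e^{-\lambda_0 u}\,du$ a.s. (mutation times form a Cox process with intensity $(av+\mu)Z_0(s)$, and $e^{-\lambda_0 s}Z_0(s)\to Y$), this converges to a $Y$-multiple of an explicit kernel. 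A conditional Lindeberg CLT, with Lindeberg trivial since the summands are bounded and there are order $e^{\lambda_0 t}$ of them, then gives conditional Gaussian finite-dimensional limits.

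\textbf{Compensator term $B_{t,j}$.} Write $\sum_iP_j(t-T_i)=\int_0^tP_j(t-s)\,dM_s$. Subtract the compensator $(av+\mu)\int_0^tP_j(t-s)Z_0(s)\,ds$. This leaves
\begin{itemize}
\item a Poisson-type martingale fluctuation of order $e^{\lambda_0 t/2}$, and
\item the term $(av+\mu)\int_0^tP_j(t-s)\big(Z_0(s)-Ye^{\lambda_0 s}\big)\,ds$.
\end{itemize}
For the second piece I use the standard CLT for supercritical branching processes: $Z_0(s)-Ye^{\lambda_0 s}=e^{\lambda_0 s/2}\cdot O(1)$, so its contribution involves $e^{-\lambda_0 t/2}\int_0^t P_j(t-s)e^{\lambda_0 s/2}\,ds$. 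Since $P_j(u)\sim c_j e^{-\lambda_1 u}$-type decay holds for fixed $j$, the integral is dominated by $s$ near $t$, so this piece is genuinely of order one and also produces a Gaussian contribution.

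\textbf{Combining the pieces and stability.} Handle all sources jointly through a martingale CLT in the nested filtration: first the type-0 fluctuations, then mutation-time noise given $\mathcal{F}_\infty$, then clone noise given $\mathcal{B}_\infty$. Each layer's conditional characteristic function converges to a Gaussian one with variance $Y\times(\text{const})$. Multiplying these yields $\mathcal{I}_\infty$-stable convergence to a mixed Gaussian independent of $\mathcal{I}_\infty$, via the standard criterion that $\mathbb{E}[Q'e^{i\langle u,X_t\rangle}]\to\mathbb{E}[Q'e^{-Y u^T\sigma u/2}]$. Summing the three limiting kernels defines $\sigma_{j,k}$ as in Lemma~\ref{eq:sigmajk_def}.

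\textbf{Asymptotics of $\sigma_{j,k}$.} These follow from the explicit geometric form $P_j(u)=(1-\alpha_u)(1-\beta_u)\beta_u^{j-1}$ with $\beta_u\to1$ like $1-ce^{-\lambda_1u}$. Then integrals $\int_0^\infty\beta_u^{j+k}e^{-\lambda_0u}\,du$ behave like a Beta function, giving $(j+k)^{-\lambda_0/\lambda_1}$ times a rational prefactor $1/(jk)$ after integration by parts. The diagonal term is handled analogously.

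\textbf{Infinite-dimensional upgrade (main obstacle).} Passing to $\ell^p$, $p\ge2$, and adding $X_{t,\ge1}$ requires tightness. It suffices to show
\[
\lim_{J\to\infty}\limsup_{t\to\infty}\mathbb{E}\Big[\sum_{j>J}X_{t,j}^2\,\Big|\,\mathcal{F}_\infty\Big]=0,
\]
and, for part 2, the analogous bound for the tail sum $\sum_{j>J}X_{t,j}$. These are controlled uniformly using $\sum_{j}P_j(u)^2$ and $\sum_{j>J}P_j(u)$ bounds together with the decay $\sigma_{j,j}\sim j^{-\lambda_0/\lambda_1-1}$, which is summable. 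The tail-sum control for $X_{t,\ge J}$ is the delicate step, because it carries the cumulative variance $\sum_{k_1,k_2\ge J}\sigma_{k_1,k_2}$, which is finite only thanks to the off-diagonal $1/(jk(j+k)^{\lambda_0/\lambda_1})$ decay. I would verify this summability first, and then use an $L^2$ maximal estimate on the conditional martingale to get uniformity in $t$.
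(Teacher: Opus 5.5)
Your route differs from the paper's. You split the noise into clone, mutation-time and type-0 layers and multiply conditional characteristic functions. The paper instead cuts at a large time $T$ and writes $X_{t,j}=e^{-\lambda_0T/2}\sum_{i=1}^{Z_0(T)}X^{T,(i)}_{t-T,j}+o(1)$, with summands that are i.i.d.\ and mean zero given $\mathcal{I}_T$. It then runs a classical characteristic-function argument that needs only the covariance limit of Lemma~\ref{lem:sigma_large_time} and a uniform fourth-moment bound.

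\textbf{The gap: your layering fails for $\mu>0$.} A replication-independent mutation is itself a downward jump of $Z_0$. So, given the type-0 path, the mutation times are \emph{not} a Cox process directed by $(av+\mu)Z_0(s)\,ds$, and $L_t=e^{-\lambda_0t}Z_0(t)$ and $R_t$ have nonzero cross-variation $[L,R]_t=-\sum e^{-\lambda_0 T}$ over those events. Concretely, let $D$ count the downward jumps of $Z_0$. Given $\mathcal{F}_\infty$, each such jump is a mutation with probability $\mu/(d+\mu)$, so
\[
\mathbb{E}\Bigl[\int_0^tP_j(t-s)\,dR_s\Bigm|\mathcal{F}_\infty\Bigr]=\frac{\mu}{d+\mu}\int_0^tP_j(t-s)\bigl(dD_s-(d+\mu)Z_0(s)\,ds\bigr).
\]
This is of the same order $e^{\lambda_0t/2}$ as the fluctuation and is correlated with your type-0 term. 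If you let $\mathcal{F}_\infty$ see the mutation labels, this piece becomes $\mathcal{F}_\infty$-measurable, with the same conclusion. Your middle layer is therefore not conditionally centred. The sum of your three kernels equals $\sigma_{j,k}$ minus the cross term
\[
\mu(av+\mu)\int_0^\infty e^{-\lambda_0 s}\Bigl(P_j(s)\int_0^sP_k(z)\,dz+P_k(s)\int_0^sP_j(z)\,dz\Bigr)\,ds.
\]
So the claim that summing them reproduces $\sigma_{j,k}$ holds only when $\mu=0$.

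\textbf{How to repair it.} Condition only once, on $\mathcal{B}_\infty$, to peel off $A_{t,j}$; there the clones really are conditionally independent. Then treat $B_{t,j}$ as a single martingale functional in $(\mathcal{B}_z)_z$. Using $Z_0(s)-Ye^{\lambda_0 s}=-e^{\lambda_0 s}\int_s^\infty dL_z$,
\[
e^{\lambda_0t/2}B_{t,j}=\int_0^\infty\bigl(\mathbf{1}_{z\le t}P_j(t-z)\,dR_z-(av+\mu)H_{t,j}(z)\,dL_z\bigr),
\]
with deterministic $H_{t,j}(z)=\int_0^{t\wedge z}P_j(t-s)e^{\lambda_0 s}\,ds$. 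A stable martingale CLT whose bracket includes $[L]$, $[R]$ and $[L,R]$ then produces exactly the missing term.

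This also fixes a second soft spot. The one-dimensional branching CLT ($Z_0(s)-Ye^{\lambda_0 s}=e^{\lambda_0 s/2}O(1)$) does not by itself give a Gaussian limit for $\int_0^tP_j(t-s)(Z_0(s)-Ye^{\lambda_0 s})\,ds$. That integral runs over a window of $s$ near $t$ and needs process-level control.

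\textbf{Part 2.} The tail-sum bound is not needed for tightness: appending one real coordinate with bounded second moment preserves tightness in $\ell^p$. It is only one way to identify the joint finite-dimensional limit including $X_{t,\ge1}$. The paper gets that limit instead by rerunning the covariance and moment bounds with $\ge j$ in place of $j$.

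\textbf{Comparison.} The paper's subtree decomposition bundles every noise source and all cross-correlations into each i.i.d.\ summand, and it makes $\mathcal{I}_\infty$-stability immediate by taking $T\ge s$. Your repaired route exposes the variance source by source, at the cost of the martingale-CLT machinery with a random limiting bracket.
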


\begin{proof}
    See Section \ref{app:clt_time}
\end{proof}

Point (2) is a useful extension of Point (1) as it can describe convergence of the scaled deviations of the SFS greater than certain sizes. In particular, using that $X_{\leq j}=\sum_{k=1}^j X_k$ and that $\ell^p \ni \{x_{\geq 1}\}\cup (x_k)_{k=1}^\infty\to (x_{\geq 1},\sum_{k=1}^j x_k)$ is continuous, we obtain from \ref{eq:extended_clt} that for all $j$:
\begin{align*}
    \{X_{t,\geq 1}\}\cup \{X_{t,\leq j}\}\overset{t\to\infty}{\Longrightarrow} \{X_{\infty,\geq 1}\}\cup \{X_{\infty,\leq j}\}
\end{align*}
$\mathcal{I}_\infty$-stably, in $\mathbb{R}^2$. This is useful in describing the asymptotic variance for the class of estimators of the fitness advance $\{\widehat{b}_{\leq j}(\cdot)\}_{j=1}^\infty$ which we describe in Section \ref{sec:estimator_ci}, as for each $j$, $\widehat{b}_{\leq j}(\cdot)$ depends on the proportion of mutations present at frequency $\leq j$. 

{\bf\em Qualitative comparison of our proof to that of Schweinsberg and Schuai \cite{schweinsberg2025}} Our proof relies on breaking up the the driver SFS into contributions from each type-0 lineage at large time $T$. In detail, we can decompose:
\begin{align}
    X_{t,j}=e^{-\lambda_0T/2}\sum_{i=1}^{Z_0(T)}X_{t-T,j}^{T,(i)}+o_{t\to\infty}(1),\label{eq:fundamental_decomposition}
\end{align}
where $(X_{t-T}^{T,(i)})_{t\geq T}|\mathcal{I}_T\overset{d}{=}(X_t)_{t\geq 0}$ on $\{i\leq Z_0(T)\}$ and $(X_{t-T}^{T,(i)})_{t\geq T}|\mathcal{I}_T$ are i.i.d. over $i\in \{1,\dots,Z_0(T)\}$. As $\exp(\lambda_0T)\asymp Z_0(T)$ as $T\to\infty$ on $\Omega_0^\infty$, so \ref{eq:fundamental_decomposition} is a standardized sum of i.i.d. deviations. Therefore, similar arguments as used in the proof of the central-limit-theorem proves convergence in finite-dimensional distributions. A tightness argument thus gives Point 1 of Theorem \ref{thm:clt}. This argument quite easily extends when also considering fluctuations in the number of clones above a certain size, yielding Point 2.

This proof structure differs fundamentally from that of Schweinsberg and Schuai \cite{schweinsberg2025}, who investigate the SFS of a sample of a neutrally growing continuous-time birth-death process. They note that the length of the ancestral branches in the sampled population supporting $k$ precisely leaves can be obtained from the coalescence times. Thus, by using the explicit formulas for the joint density of these coalescence times available in \cite{lambert2018}, they obtain limit theorems for these branch lengths. Since branch lengths are related to the SFS (assuming that mutations accrue in a replication-dependent manner in our terminology), limit theorems for the SFS can thus be devised. Therefore, we take a forward-in-time approach whereas Schweinsberg and Schuai take a backwards-in-time approach.

{\bf\em The case of increasing $j(\cdot)$} And, we can consider fluctuations of the SFS if we let $j$ vary with $t$. Interestingly in this case, the limiting variance will depend on the expectation of (a scaling of) $S_{j(t)}(t)$.

\begin{restatable}{proposition}{CLTJt}\label{prop:CLT_Jt}
    Let $j(t)\ll \exp(t\lambda_0\lambda_1/(\lambda_0+\lambda_1))$ such that $j$ is monotone increasing and $j(t)\to\infty$ as $t\to\infty$. Then:
    \[
    e^{-t\lambda_0/2}j(t)^{\frac{\lambda_1+\lambda_0}{2\lambda_1}}\left(S_{j(t)}(t)-(av+\mu)Y\int_0^t P_{j(t)}(t-s)e^{\lambda_0 s}\,ds\right)\Rightarrow X_{\infty,j(\cdot)}
    \]
    $\mathcal{I}_\infty$-stably. Conditional on $Y$, $X_{\infty,j(\cdot)}$ is distributed as a 0-mean Gaussian with random variance $Y\cdot L$. Here, $L$ is the same as $L_{j(\cdot)}$ from Proposition 5, Point 3 of \cite{ahmed2026}:
    \[L:=\lim_{t\to\infty}e^{-\lambda_0 t}j(t)^{(\lambda_0+\lambda_1)/\lambda_1}\mathbb{E}[S_{j(t)}(t)].\]
\end{restatable}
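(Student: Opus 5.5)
We follow the same forward-in-time decomposition used for Theorem~\ref{thm:clt}, but with a triangular-array central limit theorem in place of a fixed-index one, since the index $j(t)$ now moves with $t$. Write $\widetilde X_t:=e^{-t\lambda_0/2}j(t)^{\frac{\lambda_0+\lambda_1}{2\lambda_1}}\bigl(S_{j(t)}(t)-(av+\mu)Y\int_0^tP_{j(t)}(t-s)e^{\lambda_0 s}\,ds\bigr)$.

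The first step is to split $\widetilde X_t$ into two pieces. The first is the \emph{clone-level} fluctuation
\[
A_t:=e^{-t\lambda_0/2}j(t)^{\frac{\lambda_0+\lambda_1}{2\lambda_1}}\sum_{i:T_i\le t}\bigl(\mathbf 1_{Z_1^{(i)}(t-T_i)=j(t)}-P_{j(t)}(t-T_i)\bigr).
\]
The second is the \emph{mutation-time} fluctuation
\[
B_t:=e^{-t\lambda_0/2}j(t)^{\frac{\lambda_0+\lambda_1}{2\lambda_1}}\Bigl(\sum_{i:T_i\le t}P_{j(t)}(t-T_i)-(av+\mu)Y\int_0^tP_{j(t)}(t-s)e^{\lambda_0 s}ds\Bigr).
\]

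The term $B_t$ should be negligible, and here we reuse the estimates behind Point~3 of Proposition~5 in \cite{ahmed2026}. The compensator of the mutation point process is $(av+\mu)Z_0(s)\,ds$. So $B_t$ splits further into a martingale term and a term controlled by $Z_0(s)-Ye^{\lambda_0 s}=O(e^{\lambda_0 s/2})$.

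\begin{itemize}
\item \textbf{Martingale term.} Its conditional variance is of order
\[
e^{-\lambda_0 t}j^{\frac{\lambda_0+\lambda_1}{\lambda_1}}\int_0^t P_j(t-s)^2e^{\lambda_0 s}\,ds .
\]
This is smaller than the clone-level variance by a factor $\sup_u P_j(u)\lesssim 1/j\to0$.
\item \textbf{Fluctuation of $Z_0$.} Using the geometric form of $P_j(u)$, which is essentially $e^{-\lambda_1 u}(1-e^{-\lambda_1 u}q_1)^{j}$ up to constants, the mass of $\int_0^tP_j(t-s)e^{\lambda_0 s/2}ds$ is concentrated near $t-s\approx\lambda_1^{-1}\log j$. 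This gives a contribution of order $e^{-\lambda_0 t/2}j^{\frac{\lambda_0+\lambda_1}{2\lambda_1}}\cdot j^{-1-\lambda_0/(2\lambda_1)}e^{\lambda_0 t/2}=j^{-1/2}\to0$.
\end{itemize}

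I would verify these two estimates carefully; they rely on the growth condition on $j(t)$ so that the relevant window $t-\lambda_1^{-1}\log j$ lies well inside $[0,t]$.

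The main term is $A_t$. Conditional on $\mathcal B_\infty$, the clones are independent, so $A_t$ is a sum of independent centered Bernoulli terms. Its conditional variance is
\[
V_t=e^{-\lambda_0 t}j^{\frac{\lambda_0+\lambda_1}{\lambda_1}}\sum_i P_j(t-T_i)\bigl(1-P_j(t-T_i)\bigr).
\]
By the law of large numbers of \cite{ahmed2026} (Proposition~5, Point~3) applied with $P_j$ in place of the indicator, together with $P_j\le 1/j$, this converges almost surely on $\Omega_0^\infty$ to $Y\cdot L$. Here $L$ is the limit of $e^{-\lambda_0 t}j^{(\lambda_0+\lambda_1)/\lambda_1}\mathbb E[S_{j(t)}(t)]$ normalized by $\mathbb E[Y]$, which matches the stated form once the $Y$ factor is accounted for. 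Each summand is bounded by $e^{-\lambda_0 t/2}j^{\frac{\lambda_0+\lambda_1}{2\lambda_1}}$. This bound tends to $0$ exactly because $j(t)\ll \exp(t\lambda_0\lambda_1/(\lambda_0+\lambda_1))$, so the Lindeberg condition holds trivially. The conditional Lindeberg--Feller theorem then gives $\mathbb E[e^{i\theta A_t}\mid\mathcal B_\infty]\to e^{-\theta^2YL/2}$ almost surely on $\Omega_0^\infty$.

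For $\mathcal I_\infty$-stability, it suffices to test against $\mathcal I_s$-measurable bounded $Q'$ for fixed $s$ and then use a density argument. Clones born before $s$ contribute only boundedly many indicators, each scaled by a vanishing factor. Removing them costs $o(1)$, and what remains is conditionally independent of $\mathcal I_s$ given $\mathcal B_\infty$. Dominated convergence then yields
\[
\mathbb E[Q'e^{i\theta\widetilde X_t}]\to\mathbb E[Q'e^{-\theta^2YL/2}].
\]

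The hardest step will be the sharp asymptotics of $V_t$ and of $B_t$ uniformly in a growing $j$. That is, showing that the integrals against $P_{j(t)}$ localize at $t-s\approx\lambda_1^{-1}\log j(t)$ with the precise power $j^{-(\lambda_0+\lambda_1)/\lambda_1}$, and that almost-sure replacement of $Z_0(s)$ by $Ye^{\lambda_0 s}$ is legitimate there. For this I would lean on the computations already carried out for $L_{j(\cdot)}$ in \cite{ahmed2026}.
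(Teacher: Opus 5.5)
Your proposal is correct, but it is organized differently from the paper's proof. The paper computes the limiting variance by the law of total variance, reusing the decomposition of Lemma~\ref{lem:sigma_large_time}. In that decomposition, every term except $e^{-\lambda_0 t}j(t)^{(\lambda_0+\lambda_1)/\lambda_1}\mathbb{E}\bigl[\sum_{T_i\le t}P_{j(t)}(t-T_i)(1-P_{j(t)}(t-T_i))\bigr]$ is killed by $P_{j(t)}\le 1/j(t)$. The paper then asserts that the branching argument of Proposition~\ref{prop:weak_topology_convergence} carries over: split at a fixed time $T$ into $Z_0(T)$ conditionally i.i.d.\ subpopulations, Taylor-expand the conditional characteristic function, and let $t\to\infty$ and then $T\to\infty$.

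Your estimate on $B_t$ has the same content as the paper's vanishing ``outer'' variance, but you use it differently. You condition on $\mathcal{B}_\infty$, so that what remains is a sum of conditionally independent, bounded, centered Bernoulli terms, and you apply Lindeberg--Feller. This makes the role of the hypothesis transparent. The growth condition $j(t)\ll\exp(t\lambda_0\lambda_1/(\lambda_0+\lambda_1))$ appears exactly as the uniform negligibility $c_t:=e^{-\lambda_0 t/2}j(t)^{(\lambda_0+\lambda_1)/(2\lambda_1)}\to 0$ of the summands. You also need no higher-moment bounds along the moving index. By contrast, the paper's ``same proof'' tacitly needs uniform-in-$t$ third-moment control of the rescaled copies at index $j(t)$. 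Lemma~\ref{lem:control_fourth_moment} does not supply this once the factor $j(t)^{(\lambda_0+\lambda_1)/(2\lambda_1)}$ is included.

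The price is that your route works only because $P_{j(t)}\to 0$ makes the mutation-time fluctuations negligible. For fixed $j$ those fluctuations contribute to $\sigma_{j,k}$, so conditioning on $\mathcal{B}_\infty$ alone would not yield Theorem~\ref{thm:clt}. This is why the paper's branching decomposition is the more general tool.

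One small correction: you only get $V_t\to YL$ in probability, not almost surely. It follows from your own estimates: $c_tB_t\to 0$ in $L^2$, $c_t^2\mathbb{E}[S_{j(t)}(t)]\to L$, and $c_t^2\sum_{T_i\le t}P_{j(t)}(t-T_i)^2\le j(t)^{-1}c_t^2\sum_{T_i\le t}P_{j(t)}(t-T_i)$. This is enough, because $\bigl|\mathbb{E}[e^{i\theta A_t}\mid\mathcal{B}_\infty]-e^{-\theta^2V_t/2}\bigr|\le(|\theta|^3c_t+\theta^4c_t^2)V_t$, and dominated convergence then finishes the argument, including your stable-convergence step.
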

\begin{proof}
    See Section \ref{sec:easy_cors}.
\end{proof}

\subsection{Deviations in the limiting Gaussian process}\label{sec:guass_process}

We notice that the correlation structure of the limiting Gaussian process is asymptotically very similar to that of a sequence of independent Gaussians. 

\begin{restatable}{corollary}{CorrelationStructure}\label{cor:clt}
    Let $K(j,k)$ be the covariance function of the infinite Gaussian mixture and let $\rho(j,k):=K(j,k)/\sqrt{K(j,j)K(k,k)}$ denote the correlation function. Let $k(j)\neq j$ for all large $j$, then $\rho(j,k(j))\to 0$ as $j\to\infty$.
\end{restatable}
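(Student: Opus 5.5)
Because the conditional covariance has the form $K(j,k\mid Y)=Y\sigma_{j,k}$, the random factor $Y$ cancels in the correlation on $\Omega_0^\infty$. Hence $\rho(j,k)=\sigma_{j,k}/\sqrt{\sigma_{j,j}\sigma_{k,k}}$ is deterministic, and the claim reduces to a statement about the asymptotics of $\sigma_{j,k}$ given in Theorem \ref{thm:clt}. Write $\alpha=\lambda_0/\lambda_1>0$. The two expansions in Theorem \ref{thm:clt} give $\sigma_{j,k}\le C'\,(jk)^{-1}(j+k)^{-\alpha}$ for $j\ne k$ with $j+k$ large, and $\sigma_{j,j}\ge c\, j^{-\alpha-1}$ for $j$ large (using $C_2>0$, which holds since $\sigma_{j,j}$ is a variance that is not eventually zero; I would check this from the explicit constant). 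Since $\sigma_{j,j}>0$ for every $j$, this lower bound holds for all $j\ge1$ with a possibly smaller $c$.

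First I treat the case $k=k(j)\to\infty$. Then
\[
|\rho(j,k)|\lesssim \frac{(jk)^{-1}(j+k)^{-\alpha}}{(jk)^{-(\alpha+1)/2}}=\frac{(jk)^{(\alpha-1)/2}}{(j+k)^{\alpha}}.
\]
Set $m=\min(j,k)$ and $M=\max(j,k)$. Then $(jk)^{(\alpha-1)/2}=m^{(\alpha-1)/2}M^{(\alpha-1)/2}$ and $(j+k)^\alpha\asymp M^\alpha$, so the ratio is of order $(m/M)^{(\alpha-1)/2}M^{-1}\cdot M^{0}\ldots$; more precisely it equals $m^{(\alpha-1)/2}M^{-(\alpha+1)/2}=(m/M)^{(\alpha-1)/2}\,M^{-1}$. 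If $\alpha\ge1$, this is at most $M^{-1}\to0$. If $\alpha<1$, rewrite it as $m^{(\alpha-1)/2}M^{-(\alpha+1)/2}\le m^{(\alpha-1)/2}m^{-(\alpha+1)/2}\cdot (m/M)^{(\alpha+1)/2}\le m^{-1}\to 0$. In both cases the correlation tends to $0$ as long as $m\to\infty$.

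When $k(j)$ does not tend to infinity, I pass to subsequences. Along a subsequence on which $k(j)$ stays bounded (and so is eventually constant, equal to some $k_0\neq j$), I have $j+k\to\infty$, so the off-diagonal bound still applies:
\[
|\rho|\lesssim \frac{(jk_0)^{-1}j^{-\alpha}}{\sqrt{j^{-\alpha-1}\,\sigma_{k_0,k_0}}}\asymp j^{-(\alpha+1)/2}\to0 .
\]
Here $\sigma_{k_0,k_0}>0$ is fixed. Since every subsequence of $j$ has a further subsequence of one of the two types, this gives $\rho(j,k(j))\to0$.

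The main obstacle is the uniformity of the $\mathcal{O}_{j+k\to\infty}$ term in \eqref{eq:sigma_asymptotic}. I need it to hold uniformly over all pairs with $j\neq k$, not just along rays, so I would confirm this from the explicit form of $\sigma_{j,k}$ in Lemma \ref{eq:sigmajk_def}. Failing that, an upper bound $\sigma_{j,k}\lesssim (jk)^{-1}(j+k)^{-\alpha}$ uniform in $j\ne k$ suffices. I would try to derive it directly from the integral representation, bounding $P_j$ by the geometric-type formula for the linear birth-death process. The strict positivity of $C_2$ should also be verified from its explicit expression.
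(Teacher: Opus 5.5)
Your argument is correct and is essentially the paper's: cancel $Y$, insert the asymptotics of $\sigma_{j,k}$ and $\sigma_{j,j}$ from Corollary~\ref{cor:sigmajk_large_jk}, and bound the ratio. The paper does the bounding in one line via AM--GM, $(jk)^{\lambda_0/(2\lambda_1)}\le (j+k)^{\lambda_0/\lambda_1}$, which gives $\rho(j,k)\lesssim (jk)^{-1/2}$. Combined with your lower bound $\sigma_{j,j}\ge c\,j^{-\lambda_0/\lambda_1-1}$ for all $j\ge 1$, this covers bounded and unbounded $k(j)$ at once, so your case split on $\alpha$ and your subsequence argument are not needed. Your all-$j$ lower bound also fills a step the paper skips, since it states its bound only for $j,k\ge N$.

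Your uniformity worry is settled by the proof of Corollary~\ref{cor:sigmajk_large_jk}. For $j\neq k$, it shows $\sigma_{j,k}$ is a constant times $\frac{j+k}{jk}\int_0^\infty e^{-\lambda_0 s}P_{j+k}(s)\,ds$, so the error term depends on $j+k$ alone. The same corollary gives $C_2=(av+\mu)q_1^{1-\lambda_0/\lambda_1}\Gamma(\lambda_0/\lambda_1+1)/\lambda_1>0$ explicitly.
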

\begin{proof}
    See Section \ref{sec:easy_cors}
\end{proof}

Thus, it is not surprising to find the following large-deviation behavior in $j$ for the limiting mixture of $Y$ with the Gaussian process examined in Theorem \ref{thm:clt}.:
\begin{restatable}{theorem}{DeviationLargeJ}\label{thm:deviation_large_j}
    We have the following large $j$ behavior:
    \begin{align*}
        &\limsup_{j\to\infty}\frac{X_{\infty,j}}{\sqrt{2Y\sigma_{j,j}\log j}}=1\\
        &\liminf_{j\to\infty}\frac{X_{\infty,j}}{\sqrt{2Y\sigma_{j,j}\log j}}=-1\
    \end{align*}
    almost surely.
\end{restatable}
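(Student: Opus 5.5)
Conditional on $Y$ (on $\Omega_0^\infty$, so $Y>0$), the vector $(X_{\infty,j})_{j\ge1}$ is a centered Gaussian process with covariance $Y\sigma_{j,k}$, independent of $\mathcal{I}_\infty$. Hence it suffices to work with the standardized process $G_j:=X_{\infty,j}/\sqrt{Y\sigma_{j,j}}$. This is a stationary-variance sequence of standard Gaussians with correlations $r_{j,k}=\sigma_{j,k}/\sqrt{\sigma_{j,j}\sigma_{k,k}}$, and its law does not depend on $Y$. The goal is $\limsup_j G_j/\sqrt{2\log j}=1$ almost surely, and the $\liminf$ statement then follows by symmetry ($-G$ has the same law). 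Integrating over $Y$ gives the statement.

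\emph{Upper bound.} By the Gaussian tail bound and a union bound, $\mathbb{P}(G_j>(1+\varepsilon)\sqrt{2\log j})\le j^{-(1+\varepsilon)^2}$, which is summable. Borel--Cantelli then gives $\limsup\le 1+\varepsilon$ for every $\varepsilon>0$. This step uses no dependence information.

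\emph{Lower bound.} Here I use the asymptotics in Theorem \ref{thm:clt}. For $j\neq k$,
\[
r_{j,k}\asymp \frac{(jk)^{-1}(j+k)^{-\lambda_0/\lambda_1}}{(jk)^{-(\lambda_0/\lambda_1+1)/2}} = \frac{(jk)^{(\lambda_0/\lambda_1-1)/2}}{(j+k)^{\lambda_0/\lambda_1}}.
\]
Write $\alpha=\lambda_0/\lambda_1$. For $k\ge j$ this is bounded by a constant times $j^{(\alpha-1)/2}k^{-(\alpha+1)/2}=(j/k)^{(\alpha+1)/2}/j$. In particular $r_{j,k}\lesssim 1/j$ uniformly in $k\ge j$, which is slightly stronger than Corollary \ref{cor:clt}. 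Rather than working along all $j$, I pass to a sparse subsequence, say $j_n=\lfloor e^{n}\rfloor$ or $j_n=n^{m}$; the choice should be calibrated so that $\log j_n\sim\log(\text{index})$ is still achievable. Along a geometric subsequence, however, $\sqrt{2\log j_n}\sim\sqrt{2n}$, and $\max_{i\le n}$ of nearly independent Gaussians is only about $\sqrt{2\log n}$, so geometric sparsification is too wasteful. So instead I take all $j$ in a block $[N,2N]$ and show
\[
\mathbb{P}\Bigl(\max_{N\le j\le 2N}G_j<(1-\varepsilon)\sqrt{2\log N}\Bigr)
\]
is summable along $N=2^m$. For this I use the normal comparison lemma (Berman's inequality), comparing with i.i.d.\ standard normals. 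The i.i.d.\ term is $(1-\Phi(u))$-controlled: $(\Phi(u_N))^N\le\exp(-N^{1-(1-\varepsilon)^2+o(1)})$, which is summable. The error term is
\[
\sum_{N\le j<k\le 2N}|r_{j,k}|\exp\!\Bigl(-\frac{u_N^2}{1+|r_{j,k}|}\Bigr)\lesssim N^2\cdot N^{-1}\cdot N^{-2(1-\varepsilon)^2/(1+O(1/N))},
\]
using $|r_{j,k}|\lesssim 1/N$ in the block. This is $N^{1-2(1-\varepsilon)^2+o(1)}$, which is summable over dyadic $N$ for small $\varepsilon$. Borel--Cantelli then gives $\max_{[N,2N]}G_j\ge(1-\varepsilon)\sqrt{2\log N}$ eventually, and since $\log j\sim\log N$ on the block, $\limsup\ge 1-\varepsilon$.

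\emph{Main obstacle.} The delicate step is making the correlation bound uniform. The asymptotics (\ref{eq:sigma_asymptotic}) are stated as $j+k\to\infty$ with error terms; I need $\sup_{k\ne j,\,j,k\ge N}|r_{j,k}|\lesssim N^{-1}$, including near-diagonal pairs $k=j+1$. Those pairs give $r\asymp (j^2)^{(\alpha-1)/2}(2j)^{-\alpha}\asymp 1/j$, so the bound is fine, but the $\mathcal{O}$ terms must be shown to be uniform in the off-diagonal regime. If that fails near the diagonal, the fallback is to thin the block to every other index, or to use the standard $r_n\log n\to0$ criterion (Berman's condition) directly.
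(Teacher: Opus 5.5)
Your proposal is correct. The upper bound is the same as the paper's: standardize by $\sqrt{Y\sigma_{j,j}}$, then use the Gaussian tail bound and Borel--Cantelli. For the lower bound you take a genuinely different route.

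\textbf{The paper's lower bound.} It runs a second-moment argument over the whole tail. It sets $A_k=\{G_k>p\sqrt{2\log k}\}$ and $S_{j_1,j_2}=\sum_{k=j_1}^{j_2}\mathbf{1}_{A_k}$, and applies Paley--Zygmund. It controls the excess $\mathbb{E}[S_{j_1,j_2}^2]-\mathbb{E}S_{j_1,j_2}-(\mathbb{E}S_{j_1,j_2})^2$ pairwise with the \emph{bivariate} normal comparison inequality and the global bound $\rho(i_1,i_2)\le C/\sqrt{i_1 i_2}$. This makes the covariance sum bounded independently of $j_2$, while $\mathbb{E}S_{j_1,j_2}\to\infty$. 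Hence $\mathbb{P}(\bigcup_{k\ge j_1}A_k)=1$ for every large $j_1$, with no Borel--Cantelli over blocks.

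\textbf{Your lower bound.} You apply the full $N$-dimensional Berman inequality on dyadic blocks, compare the block maximum with an i.i.d.\ maximum, and finish with the first Borel--Cantelli lemma.

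\textbf{What each route buys.} Yours needs correlation control only inside a block ($|r_{j,k}|\lesssim 1/N$ there), not decay across far-apart indices. It also gives a stronger conclusion: the level $(1-\varepsilon)\sqrt{2\log N}$ is eventually exceeded in every dyadic block, not merely infinitely often. The paper's route avoids the i.i.d.\ comparison and the block bookkeeping. In exchange it relies on the pairwise comparison errors being summable over all pairs in the tail, which is exactly what the $1/\sqrt{i_1 i_2}$ bound supplies.

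\textbf{Your ``main obstacle'' is not an obstacle.} The computation in the proof of Corollary~\ref{cor:sigmajk_large_jk} gives, for $j\neq k$, the exact identity
\[
\sigma_{j,k}=c\,\frac{j+k}{jk}\int_0^\infty e^{-\lambda_0 s}P_{j+k}(s)\,ds
\]
for a fixed constant $c$. So the error term depends only on $j+k$. Your bound $r_{j,k}\lesssim 1/\min(j,k)$ therefore holds uniformly, including at near-diagonal pairs such as $k=j+1$, and no thinning or fallback is needed.
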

\begin{proof}
    See Section \ref{app:deviation_limiting}.
\end{proof}

\subsection{CLT of the SFS at large detection sizes}\label{sec:clt_detection}

We are also able to get a CLT along the sequence of detection times. This is relevant in settings in which a population is detected at a certain size.

\begin{restatable}{theorem}{CLTDetectionSize}\label{thm:clt_detection_size}
Define:
\begin{align*}
    \overline{X}_{n,j}=n^{-\frac{\lambda_0}{2\lambda_1}}\left(S_j(\tau_n)-(av+\mu)Y\int_0^{\tau_n}P_j(\tau_n-s)e^{\lambda_0 s}\,ds\right)_{j=1}^\infty
\end{align*}
\begin{enumerate}
    \item \[(\overline{X}_{n,j})_{j=1}^\infty|\Omega_0^\infty\overset{n\to\infty}{\Rightarrow}W^{-\frac{\lambda_0}{2\lambda_1}}(X_{\infty,j})_{j=1}^\infty|\Omega_0^\infty\]
    $\mathcal{I}_\infty$-stably, in $\ell^p$ for $p$ sufficiently large.
    \item \[\{\overline{X}_{n,\geq 1}\}\cup(\overline{X}_{n,j})_{j=1}^\infty|\Omega_0^\infty\overset{n\to\infty}{\Rightarrow}W^{-\frac{\lambda_0}{2\lambda_1}}(X_{\infty,j})_{j=1}^\infty|\Omega_0^\infty\]
    $\mathcal{I}_\infty$-stably, in $\ell^p$ for $p$ sufficiently large.
\end{enumerate}
\end{restatable}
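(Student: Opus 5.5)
The plan is to derive the detection-size CLT from the $\mathcal{I}_\infty$-stable CLT at deterministic times (Theorem \ref{thm:clt}) by a random time change. Note first that $\overline{X}_{n,j}=n^{-\lambda_0/(2\lambda_1)}e^{\lambda_0\tau_n/2}X_{\tau_n,j}$. On $\Omega_0^\infty$ we have $Z(t)e^{-\lambda_1 t}\to W>0$ almost surely. Since $Z(\tau_n)=n$ (jumps are of size one), this gives $e^{\lambda_1\tau_n}\sim n/W$. Hence $n^{-\lambda_0/(2\lambda_1)}e^{\lambda_0\tau_n/2}\to W^{-\lambda_0/(2\lambda_1)}$ almost surely, and $W$ is $\mathcal{I}_\infty$-measurable. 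Stable convergence is preserved under multiplication by an a.s.-convergent $\mathcal{I}_\infty$-measurable prefactor (joint convergence $(Q',X_t)$ and Slutsky). So everything reduces to showing $X_{\tau_n,\cdot}\Rightarrow X_{\infty,\cdot}$ $\mathcal{I}_\infty$-stably in $\ell^p$, for both point (1) and the extended vector of point (2).

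The obstacle is that $\tau_n$ is random and depends on the whole process, including the type-1 population, so we cannot simply substitute it into Theorem \ref{thm:clt}. To handle this I would use an Anscombe-type argument built on the decomposition \eqref{eq:fundamental_decomposition}.

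First, fix a large $T$. Write $t_n:=\lambda_1^{-1}\log n$, and define the deterministic-shift grid $\tau_n=t_n-\lambda_1^{-1}\log W+o(1)$. Conditionally on $\mathcal{I}_T$, the decomposition expresses $X_{t,\cdot}$ as $e^{-\lambda_0 T/2}\sum_{i\le Z_0(T)}X^{T,(i)}_{t-T,\cdot}$ plus a negligible term, with i.i.d. copies.

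Second, I would show an asymptotic equicontinuity (modulus) estimate. For $h\to0$,
\[\sup_{|s-t|\le h}\|X_{s,\cdot}-X_{t,\cdot}\|_{\ell^p}\to0\]
in probability as $t\to\infty$ and $h\downarrow0$. This uses second-moment bounds on the increments of $S_j$ over short windows: in time $h$ only $O(he^{\lambda_\cdot t})$ clone-size changes occur. Here the requirement "$p$ sufficiently large" enters, because the type-1 population grows at rate $\lambda_1>\lambda_0$. The total number of jumps of $(S_j)$ in a window of length $h$ is of order $he^{\lambda_1 t}\gg e^{\lambda_0 t/2}$, so $\ell^2$ control fails. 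We instead need $\ell^p$ norms, where each clone's jump perturbs only two coordinates by $\pm1$ and large clones contribute little. Concretely, I would bound $\mathbb{E}\sum_j|\Delta S_j|^p$ using the asymptotics of $P_j$ and choose $p$ so that the sum is $o(e^{p\lambda_0 t/2})$. The centering term varies smoothly in $t$, and since $Y$ is fixed its increment is deterministic-times-$Y$ and easily controlled.

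Third, I would approximate $\tau_n$ by $\tau_n^{(T)}:=t_n-\lambda_1^{-1}\log W_T$, where $W_T$ is an $\mathcal{I}_T$-measurable approximation of $W$ (e.g. $e^{-\lambda_1 T}Z_1(T)$ adjusted). Then $|\tau_n-\tau_n^{(T)}|$ is small with high probability for large $T$. Conditionally on $\mathcal{I}_T$, the time $\tau_n^{(T)}$ is deterministic, so the conditional CLT underlying Theorem \ref{thm:clt} applies along the (conditionally deterministic) sequence $\tau_n^{(T)}$. This yields $\mathcal{I}_T$-stable convergence to the mixed Gaussian, whose conditional law given $Y$ is independent of $\mathcal{I}_\infty$. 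The equicontinuity step then transfers the result from $\tau_n^{(T)}$ to $\tau_n$. Letting $T\to\infty$ and using that $\bigcup_T\mathcal{I}_T$ generates $\mathcal{I}_\infty$ upgrades this to $\mathcal{I}_\infty$-stability, by approximating bounded $\mathcal{I}_\infty$-measurable $Q'$ in $L^1$.

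Point (2) follows identically, since the extra coordinate $X_{t,\ge1}=e^{-\lambda_0 t/2}(M_t-\text{centering})$ has an even simpler modulus estimate: $M_t$ jumps at rate $O(e^{\lambda_0 t})$. I expect the main difficulty to be the $\ell^p$ equicontinuity bound in step two, i.e. identifying the threshold on $p$.
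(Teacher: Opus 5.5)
Your overall architecture is workable and differs from the paper's. With $T$ fixed, the $\mathcal{I}_T$-measurable proxy $\tau_n^{(T)}$ lets the conditional i.i.d.\ structure at time $T$ apply directly. The paper instead lets $t_n\to\infty$ and uses the early-clone hitting time $\tau_n'$, which forces the enlarged $\sigma$-algebra $\mathcal{I}^\ast_{t_n}$ and Lemmas~\ref{lem:remainder_control}--\ref{lem:stopping_time_approximation}; in exchange it only compares $X$ over windows of length $M_n\to0$. Your route needs increments over windows of fixed length $h(T)$ to be small uniformly in large $t$ as $h\downarrow0$. Since $\tau_n^{(T)}\vee T$ is a stopping time, a forward-window bound from $\tau_n\wedge(\tau_n^{(T)}\vee T)$ suffices.

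The genuine gap is in step two, which you rightly call the crux: the mechanism you sketch cannot produce small bounds. You bound increments of $S_j$ (via jump counts) and of the centering term separately, but each is of order $he^{\lambda_0 t/2}$ after scaling. Already $\mathbb{E}[S_1(t+h)-S_1(t)]\asymp he^{\lambda_0 t}$, so $\mathbb{E}\sum_j|\Delta S_j|^p\ge|\mathbb{E}\Delta S_1|^p\asymp h^pe^{p\lambda_0 t}$ is never $o(e^{p\lambda_0 t/2})$. Likewise the scaled centering moves by an amount of order $hY\mu_je^{\lambda_0 t/2}$, which diverges for fixed $h$. Jump counts only bound the quadratic variation of a compensated part. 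Smallness of $X_{t+h,j}-X_{t,j}$ comes from cancellation between the drift of $S_j$ and the time derivative of the centering. That drift consists of new clones arriving at rate $(av+\mu)Z_0$ and transfers between sizes $j\pm1$ at rates proportional to $j$.

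To exhibit the cancellation you need the decomposition used in Lemma~\ref{lem:tau_n_tau_n_prime_X}, with four pieces:
\begin{itemize}
\item the per-clone Dynkin martingales forming $\Gamma_{1,j}$, whose quadratic variation counts entrances and exits at size $j$;
\item their compensator fluctuation $\Gamma_{2,j}$, a time integral of the centered SFS at sizes $j-1,j,j+1$ with weights proportional to $j$;
\item the mutation martingale term $\int_0^tP_j(t-s)\,dR_s$;
\item the term $(av+\mu)\int_0^tP_j(t-s)(Z_0(s)-Ye^{\lambda_0 s})\,ds$.
\end{itemize}
With these, each scaled increment over a window of length $h$ is $O(\sqrt h)$ in $L^2$, with $j$-dependent constants. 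That handles finitely many coordinates. For the $\ell^p$ tail, the weights $j$ in $\Gamma_{2,j}$ mean your ``large clones contribute little'' is not automatic. You need $j$-summed moment bounds with suprema over the window, as in Lemma~\ref{lemma:QuadVarBound}.

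Relatedly, $X_{t,\ge1}$ is not built from $M_t$. $S_{\ge1}(t)$ counts surviving clones, so extinctions (at rate $dS_1(t)$) must also be compensated. Again, raw jump counts of order $he^{\lambda_0 t}$ are not small on the $e^{\lambda_0 t/2}$ scale.
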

\begin{proof}
    See Section \ref{app:clt_detection_size}.
\end{proof}

The proof uses a similar decomposition \ref{eq:fundamental_decomposition} as in Theorem \ref{thm:clt} but preceeding this step, the random time $\tau_n$ is approximated by $\tau_n':=\inf\{t>0:Z_1^{[0,T_n]}(t)=n\}$. $Z_1^{[0,T_n]}(t)$ counts the sizes at time $t$ of all type-1 clones produced in the interval $[0,T_n]$. Thus, $\tau_n'$ is the time at which the early clones of the population reaches size $n$. For specially chosen $T_n\to \infty$ as $n\to\infty$:
\[X_{\tau_n,j}\approx X_{\tau_n',j}=e^{-\lambda_0 T_n/2}\sum_{i=1}^{Z_0(T_n)}X_{\tau_n'-T_n,j}^{T_n,(i)}+o_{n\to\infty}(1).\]
Importantly, $\tau_n'$ is $\mathcal{I}_{T_n}^\ast$-measurable. Here, $\mathcal{I}_{t}^{\ast}$ is the smallest $\sigma$-algebra containing (1) $\mathcal{I}_{t}$ and (2) $(Z_1^{[0,t]}(s))_{s\geq 0}$, that is it contains the type-0 information up to time $t$ as well as all information from the early clones. We obtain that for each $n$, $X_{\tau_n'-T_n}^{T_n,(i)}|\mathcal{I}_{T_n}\overset{d}{=}X_{s}|s=\tau_n'-T_n$ on $\{i\leq Z_0(T_n)\}$ and $(X_{\tau_n'-T_n}^{T_n,(i)})_{t\geq T_n}|\mathcal{I}_T$ are i.i.d. over $i\in \{1,\dots,Z_0(T_n)\}$. Then, the same argument used in Theorem \ref{thm:clt} can be recycled to show that the scaled deviation evaluated at $\tau_n'$ obeys a central limit theorem. So, the reason for approximating $\tau_n$ by $\tau_n'$ is that the latter time is measurable with respect to the information of the early clones whilst the former time is not.

The most involved part is to prove that $X_{\tau_n',j}$ is a good approximation for $X_{\tau_n,j}$. To do so, we first show that $\tau_n'$ is a good approximation of $\tau_n$ at large $n$. This is intuitively clear since the early clones form the bulk of the population. Then, we decompose $|X_{\tau_n',j}-X_{\tau_n,j}|$ into various martingale and monotone pieces, showing each tends to $0$ in probability.

The prefactor $n^{-\lambda_0/(2\lambda_1)}$ and the extra source of randomness $W^{-\lambda_0/(2\lambda_1)}$ in Theorem \ref{thm:clt_detection_size} arise from the approximations $\tau_n\approx \frac{1}{\lambda_1}\log(n/W)$ and (thus):
\[e^{-\lambda_0\tau_n/2}\approx W^{\lambda_0/(2\lambda_1)}n^{-\lambda_0/(2\lambda_1)}.\]
Therefore, $\overline{X}_{n,j}\approx W^{-\lambda_0/(2\lambda_1)}X_{\tau_n,j}$ and Theorem \ref{thm:clt_detection_size} is shown.

\subsection{Deviations of the estimator $\widehat{b}(t)$}\label{sec:estimator_ci}

In our previous work, we showed that on $\Omega_0^\infty$:
\begin{align*}
    \frac{S_j(t)}{S_{\geq 1}(t)}\to F_j(b)
\end{align*}
almost surely. Here:
\begin{align*}
    F_j(\ell):=\frac{\frac{\lambda_0}{a+\ell-d}\frac{\Gamma (j)\Gamma(1+\lambda_0/(\ell+a-d))}{\Gamma(\lambda_0/(\ell+a-d)+1+j)}{}_2F_1(\lambda_0/(\ell+a-d),j;\lambda_0/(\ell+a-d)+1+j;d/(a+\ell))}{{}_2F_1(1,\lambda_0/(\ell+a-d);\lambda_0/(\ell+a-d)+1;d/(a+\ell))}
\end{align*}

In our previous work, we showed that $F_1(\ell)$ is invertible over $\ell\in [0,\infty)$ enabling the construction of the estimator $\widehat{b}(t)=F_1^{-1}(f_1(t))$ where $f_1(t)=S_1(t)/S_{\geq 1}(t)$. We argued that $\widehat{b}(t)\to b$ as $t\to\infty$ and $\widehat{b}(\tau_n)\to b$ as $n\to\infty$ almost surely on $\Omega_0^\infty$ by using our first-order asymptotic convergence results for $S_1(t),S_{\geq 1}(t)$. This consistency is plotted in Figure \ref{fig:b_1_consistency}A. Here, we use our second-order asymptotics to provide a central limit theorem for $\widehat{b}(t)$ and $\widehat{b}(\tau_n)$. In this section, we have the following notation:
\begin{align*}
        &\mu_j=(av+\mu)\int e^{-\lambda_0 s}P_j(s)\,ds=\lim_{t\to\infty}\mathbb{E}[e^{-\lambda_0 t}S_j(t)]\\
        &\mu_{\geq j}=\sum_{k\geq j}\mu_k\\
        &f_{\leq j}(t)=S_{\leq j}/S_{\geq 1}.
    \end{align*}

Our first result states:
\begin{restatable}{proposition}{ConfidenceInterval}\label{prop:confidence_interval}
    \begin{align*}
        e^{\lambda_0 t/2}(\widehat{b}(t)-b)|\Omega_0^\infty\overset{t\to\infty}{\Rightarrow} \mathcal{N}
    \end{align*}
    $\mathcal{I}_\infty$-stably. $\mathcal{N}$ is a mixture of a 0-mean Gaussian with a random variance depending on $Y$. In particular:
    \[\mathbb{V}[\mathcal{N}|Y]=\frac{1}{(F'(b)\mu_{\geq 1})^2Y}\left(\sigma_{1,1}+\frac{\mu_1^2}{\mu_{\geq 1}^2}\sum_{i=1,j=1}^\infty\sigma_{i,j}-2\frac{\mu_1}{\mu_{\geq 1}}\sum_{j=1}^\infty \sigma_{1,j}\right).\]
    Furthermore:
    \begin{align*}
        n^{\lambda_0/(2\lambda_1)}(\widehat{b}(\tau_n)-b)|\Omega_0^\infty\overset{n\to\infty}{\Longrightarrow}\mathcal{N}',
    \end{align*}
    $\mathcal{I}_\infty$-stably. $\mathcal{N}'$ is a mixture of a 0-mean Gaussian with a random variance depending on $(Y,W)$. In particular:
    \begin{align*}
        \mathbb{V}[\mathcal{N}'|(Y,W)]=W^{\lambda_0/\lambda_1}\mathbb{V}[\mathcal{N}|Y].
    \end{align*}
\end{restatable}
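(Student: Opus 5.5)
The plan is a delta-method argument applied to the joint CLT of Theorem \ref{thm:clt}, Point 2, and Theorem \ref{thm:clt_detection_size}, Point 2. We write $f_1(t)=S_1(t)/S_{\geq 1}(t)$ and introduce the deterministic-given-$Y$ centerings $m_1(t)=(av+\mu)Y\int_0^t P_1(t-s)e^{\lambda_0 s}ds$ and $m_{\geq 1}(t)=\sum_j m_j(t)$. On $\Omega_0^\infty$ we have $e^{-\lambda_0 t}m_j(t)\to Y\mu_j$ and $e^{-\lambda_0 t}m_{\geq 1}(t)\to Y\mu_{\geq 1}$. Then
\begin{align*}
f_1(t)-\frac{m_1(t)}{m_{\geq 1}(t)}=\frac{S_1(t)-m_1(t)}{S_{\geq 1}(t)}-\frac{m_1(t)}{m_{\geq 1}(t)}\cdot\frac{S_{\geq 1}(t)-m_{\geq 1}(t)}{S_{\geq 1}(t)} .
\end{align*}
Multiplying by $e^{\lambda_0 t/2}$, and using $e^{-\lambda_0 t}S_{\geq 1}(t)\to Y\mu_{\geq 1}$ a.s. (the first-order result of \cite{ahmed2026}), we get
\begin{align*}
e^{\lambda_0 t/2}\Bigl(f_1(t)-\frac{m_1(t)}{m_{\geq 1}(t)}\Bigr)=\frac{1}{Y\mu_{\geq 1}}\Bigl(X_{t,1}-\frac{\mu_1}{\mu_{\geq 1}}X_{t,\geq 1}\Bigr)+o_P(1).
\end{align*}
The map $(x_{\geq1},(x_k))\mapsto x_1-\frac{\mu_1}{\mu_{\geq1}}x_{\geq1}$ is continuous on $\ell^p$. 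The prefactor $1/(Y\mu_{\geq1})$ is $\mathcal{I}_\infty$-measurable, and stable convergence permits joint convergence with such variables. Hence Point 2 of Theorem \ref{thm:clt} yields convergence to a conditionally Gaussian limit. Its conditional variance is $Y^{-1}\mu_{\geq1}^{-2}\bigl(\sigma_{1,1}+\frac{\mu_1^2}{\mu_{\geq1}^2}\sum_{i,j}\sigma_{i,j}-2\frac{\mu_1}{\mu_{\geq1}}\sum_j\sigma_{1,j}\bigr)$.

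Next, we must replace the random centering $m_1(t)/m_{\geq1}(t)$ by $F_1(b)$ at the required precision. We need $e^{\lambda_0 t/2}\bigl(m_1(t)/m_{\geq 1}(t)-F_1(b)\bigr)\to0$. Since $Y$ cancels in this ratio, it is deterministic. We have $e^{-\lambda_0 t}m_j(t)=(av+\mu)Y\int_0^t e^{-\lambda_0 s}P_j(s)ds$, and $F_1(b)=\mu_1/\mu_{\geq1}$. So the error is governed by the tails $\int_t^\infty e^{-\lambda_0 s}P_j(s)ds$, summed over $j$ where necessary. For $j=1$, $P_1(s)\lesssim e^{-\lambda_1 s}$, so this tail is $\mathcal{O}(e^{-(\lambda_0+\lambda_1)t})$. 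For the sum over $j$, $\sum_j P_j(s)=\mathbb{P}(Z_1(s)>0)\to q_1$, so the tail is $\mathcal{O}(e^{-\lambda_0 t})$. Both are $o(e^{-\lambda_0 t/2})$.

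We then apply the delta method to $\widehat b(t)=F_1^{-1}(f_1(t))$. Since $f_1(t)\to F_1(b)$ a.s. and $F_1$ is smooth with $F_1'(b)\neq0$ (we invoke the invertibility and monotonicity established in \cite{ahmed2026}), $F_1^{-1}$ is differentiable at $F_1(b)$. Hence $e^{\lambda_0t/2}(\widehat b(t)-b)=F_1'(b)^{-1}e^{\lambda_0t/2}(f_1(t)-F_1(b))+o_P(1)$, which gives the stated variance. Stability is preserved because we compose with a continuous map and add $o_P(1)$ terms.

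For the detection-size statement we repeat the argument, now using Theorem \ref{thm:clt_detection_size}, Point 2. The scaling becomes $n^{\lambda_0/(2\lambda_1)}$, and $e^{-\lambda_0\tau_n}S_{\geq1}(\tau_n)\to Y\mu_{\geq1}$. The limit acquires the factor $W^{-\lambda_0/(2\lambda_1)}$, which enters squared as $W^{-\lambda_0/\lambda_1}$ in the variance. However, the normalization by $S_{\geq1}(\tau_n)\approx Y\mu_{\geq1}e^{\lambda_0\tau_n}$, compared with the scaling $n^{\lambda_0/(2\lambda_1)}$, requires care. We write $e^{\lambda_0\tau_n/2}\approx n^{\lambda_0/(2\lambda_1)}W^{-\lambda_0/(2\lambda_1)}$, so that
\begin{align*}
n^{\lambda_0/(2\lambda_1)}\bigl(f_1(\tau_n)-F_1(b)\bigr)\approx W^{\lambda_0/(2\lambda_1)}e^{\lambda_0\tau_n/2}\bigl(f_1(\tau_n)-F_1(b)\bigr).
\end{align*}
Tracking the powers of $W$ in this way should yield the claimed $W^{\lambda_0/\lambda_1}\mathbb{V}[\mathcal{N}|Y]$. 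The centering error at $\tau_n$ is handled as before, since $\tau_n\to\infty$ a.s.

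The main obstacle I expect is the bookkeeping of exponents of $W$, since $\overline X_{n}$ is normalized by $n$ while the ratio is normalized by $e^{\lambda_0\tau_n}$. I would resolve this carefully using $e^{\lambda_1\tau_n}W/n\to1$ a.s.
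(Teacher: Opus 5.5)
Your proposal is correct and follows the paper's route: a delta method applied to the joint $(X_{t,1},X_{t,\geq 1})$ limit from Theorem~\ref{thm:clt}, Point 2 (resp.\ Theorem~\ref{thm:clt_detection_size}, Point 2), followed by a second delta step through $F_1^{-1}$ using $F_1'(b)\neq 0$. You also make two points explicit that the paper's short proof leaves implicit, and both are handled correctly: the finite-time centering $m_1(t)/m_{\geq 1}(t)$ differs from $\mu_1/\mu_{\geq 1}$ by only $\mathcal{O}(e^{-\lambda_0 t})=o(e^{-\lambda_0 t/2})$, and the powers of $W$ in the $\tau_n$ statement work out via $n e^{-\lambda_1\tau_n}\to W$.
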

\begin{proof}
    Setting $g(x,y)=x/y$, we may write $f_1(t)=g(e^{-\lambda_0 t}S_1(t),e^{-\lambda _0 t}S_{\geq 1}(t))$. Let $\mu_1=\lim_{t\to\infty}e^{-\lambda_0 t}\mathbb{E}[S_1(t)]$ and $\mu_{\geq 1}=\lim_{t\to\infty}e^{-\lambda_0 t}\mathbb{E}[S_{\geq 1}(t)]$. By the multivariate Delta method:
\[e^{\lambda_0 t/2}(f_1(t)-\frac{\mu_1}{\mu_{\geq 1}})|\Omega_0^\infty\overset{t\to\infty}{\Rightarrow} \nabla g(Y\mu_1,Y\mu_{\geq 1})\begin{pmatrix}
X_{\infty,1}\\
X_{\infty,\geq 1}
\end{pmatrix}.\]
Remarking that $b=F^{-1}(\mu_1/\mu_{\geq 1})$ and $\widehat{b}(t)=F^{-1}(f_1(t))$ and applying the Delta method once more:
\begin{align*}
    e^{\lambda_0 t/2}(\widehat{b}(t)-b)|\Omega_0^\infty=e^{\lambda_0 t/2}(F^{-1}(f_1(t))-F^{-1}(\frac{\mu_1}{\mu_{\geq 1}}))|\Omega_0^\infty\overset{t\to\infty}{\Rightarrow} \frac{1}{F'(b)}\nabla g(Y\mu_1,Y\mu_{\geq 1})\begin{pmatrix}
        X_{\infty,1}\\
        X_{\infty,\geq 1}
    \end{pmatrix}.
\end{align*}
\end{proof}

Sample scaled deviations from simulation are shown in Figure \ref{fig:b_1_consistency}B, and the limiting distribution $\mathcal{N}'$ is compared to the empirical distribution in Figure \ref{fig:b_1_consistency}C.

In fact, we can construct a class of estimators using information of the fraction of clones of small size $\leq j$. Specifically, defining $F_{\leq j}(\ell):=\sum_{k=1}^j F_k(\ell)$, we have that $S_{\leq j}(t)/S_{\geq 1}(t)\to F_{\leq j}(b)$, with $\ell \to F_{\leq j}(\ell)$ still being continuous and monotone decreasing. Therefore, $\widehat{b}_{\leq j}(\cdot):=F_{\leq j}^{-1}(f_{\leq j}(\cdot))$ is still an asymptotically consistent estimator of $b$. Then, Proposition \ref{prop:confidence_interval} can easily be extended to show that:
\begin{restatable}{proposition}{ConfidenceIntervalExtension}\label{prop:confidence_interval_extension}
    \begin{align*}
        &e^{\lambda_0 t/2}(\widehat{b}_{\leq j}(t)-b)|\Omega_0^\infty\overset{t\to\infty}{\Longrightarrow} \mathcal{N}_{\leq j},\\
        &n^{\lambda_0/(2\lambda_1)}(\widehat{b}_{\leq j}(\tau_n)-b)|\Omega_0^\infty\overset{t\to\infty}{\Longrightarrow} \mathcal{N}_{\leq j}'.
    \end{align*}

    $\mathcal{I}_\infty$-stably. Here, $\mathcal{N}_{\leq j},\mathcal{N}_{\leq j}'$ are mixtures of a $0$-mean Gaussian with a random variance measurable with respect to $Y$ and $(Y,W)$, respectively. Specifically:
    \begin{align*}
        &\mathbb{V}[\mathcal{N}_{\leq j}|Y]=Y^{-1}\sigma_{b,\leq j}^2,\\
        &\mathbb{V}[\mathcal{N}_{\leq j}'|(Y,W)]=W^{\lambda_0/\lambda_1}Y^{-1}\sigma_{b,\leq j}^2.
    \end{align*}
    Here:
    \begin{align*}
        \sigma_{b,\leq j}^2:=\frac{1}{(F_{\leq j}'(b)\mu_{\geq 1})^2}\left(\sum_{i_1,i_2=1}^j \sigma_{i_1,i_2}-2\frac{\mu_1+\cdots+\mu_j}{\mu_{\geq 1}}\sum_{i=1}^j \sum_{k=1}^\infty\sigma_{i,k}+\frac{(\mu_1+\cdots+\mu_j)^2}{\mu_{\geq 1}^2}\sum_{k_1,k_2=1}^{\infty}\sigma_{k_1,k_2}\right)
    \end{align*}

\end{restatable}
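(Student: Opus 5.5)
The plan mirrors the proof of Proposition \ref{prop:confidence_interval}, with $S_1$ replaced by $S_{\leq j}=\sum_{k=1}^j S_k$.

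\textbf{Step 1: joint convergence of numerator and denominator.} Compose Point 2 of Theorem \ref{thm:clt} with the continuous map
\[
\ell^p\ni\{x_{\geq 1}\}\cup(x_k)_{k=1}^\infty\mapsto \Bigl(\sum_{k=1}^j x_k,\;x_{\geq 1}\Bigr)\in\mathbb{R}^2 .
\]
This map is continuous because it involves only finitely many coordinates. By the continuous mapping theorem, which is compatible with stable convergence, $(X_{t,\leq j},X_{t,\geq 1})$ converges $\mathcal{I}_\infty$-stably on $\Omega_0^\infty$ to $(X_{\infty,\leq j},X_{\infty,\geq 1})$. Conditional on $Y$, the limit is centered Gaussian. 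Its covariances are read off Point 2:
\[
\mathbb{E}[X_{\infty,\leq j}^2|Y]=Y\sum_{i_1,i_2\le j}\sigma_{i_1,i_2},\qquad
\mathbb{E}[X_{\infty,\leq j}X_{\infty,\geq1}|Y]=Y\sum_{i\le j}\sum_{k\ge1}\sigma_{i,k},
\]
and $\mathbb{E}[X_{\infty,\geq 1}^2|Y]=Y\sum_{k_1,k_2}\sigma_{k_1,k_2}$.

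\textbf{Step 2: replace the random centering by a deterministic one.} The vector $e^{-\lambda_0 t/2}\bigl(S_{\leq j}(t)-Y\mu_{\leq j}e^{\lambda_0 t},\,S_{\geq1}(t)-Y\mu_{\geq1}e^{\lambda_0 t}\bigr)$, with $\mu_{\leq j}=\mu_1+\cdots+\mu_j$, must have the same limit. This requires
\[
e^{\lambda_0 t/2}\Bigl|\int_0^t P_k(t-s)e^{\lambda_0(s-t)}\,ds-\int_0^\infty P_k(u)e^{-\lambda_0u}\,du\Bigr|=e^{\lambda_0t/2}\int_t^\infty P_k(u)e^{-\lambda_0 u}\,du\to0 .
\]
This holds since $P_k\le1$, so the tail integral is $O(e^{-\lambda_0 t})$. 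The same bound, summed over $k$, handles the $\geq 1$ term, using $\sum_k P_k\le 1$.

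\textbf{Step 3: delta method for the ratio.} Write $f_{\leq j}(t)=g\bigl(e^{-\lambda_0t}S_{\leq j}(t),e^{-\lambda_0t}S_{\geq1}(t)\bigr)$ with $g(x,y)=x/y$. Expand $g$ around the $\mathcal{I}_\infty$-measurable random point $(Y\mu_{\leq j},Y\mu_{\geq1})$, which lies off $\{y=0\}$ on $\Omega_0^\infty$. The gradient there is $(Y\mu_{\geq1})^{-1}\bigl(1,-\mu_{\leq j}/\mu_{\geq1}\bigr)$. The remainder is controlled because $e^{-\lambda_0 t}S(t)$ converges a.s. by the first-order results of \cite{ahmed2026}. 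Stable convergence permits multiplying a stably convergent sequence by an a.s. convergent $\mathcal{I}_\infty$-measurable factor, so the random delta method is justified.

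\textbf{Step 4: invert $F_{\leq j}$.} Use $b=F_{\leq j}^{-1}(\mu_{\leq j}/\mu_{\geq1})$. Since $F_{\leq j}$ is continuous, strictly monotone and differentiable with $F'_{\leq j}(b)\neq0$, a second delta method applies. It yields a conditionally Gaussian limit with variance
\[
\frac{1}{F'_{\leq j}(b)^2Y^2\mu_{\geq1}^2}\Bigl(\mathrm{Var}X_{\infty,\le j}-2\tfrac{\mu_{\le j}}{\mu_{\ge1}}\mathrm{Cov}+\tfrac{\mu_{\le j}^2}{\mu_{\ge1}^2}\mathrm{Var}X_{\infty,\ge1}\Bigr).
\]
Substituting Step 1 gives exactly $Y^{-1}\sigma^2_{b,\leq j}$.

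\textbf{Step 5: detection-size version.} Repeat Steps 1--4 with Point 2 of Theorem \ref{thm:clt_detection_size} in place of Theorem \ref{thm:clt}, scaling by $n^{\lambda_0/(2\lambda_1)}$. The limit vector carries the extra factor $W^{-\lambda_0/(2\lambda_1)}$. The ratio must now be normalized by $e^{-\lambda_0\tau_n}$, so one needs $n^{-\lambda_0/\lambda_1}e^{\lambda_0\tau_n}\to W^{-\lambda_0/\lambda_1}$ a.s.; this follows from $\tau_n\approx\lambda_1^{-1}\log(n/W)$. Combining the factors multiplies the variance by $W^{\lambda_0/\lambda_1}$, giving the stated $\mathcal{N}'_{\leq j}$.

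\textbf{Main obstacle.} The most delicate points are checking that $F'_{\leq j}(b)\neq0$, which the first-order work gives only as monotonicity, so the derivative must be checked from the explicit hypergeometric form. The other is the centering error at $\tau_n$, i.e.\ that $n^{\lambda_0/(2\lambda_1)}e^{-\lambda_0\tau_n}$ times the $O(e^{\lambda_0(\tau_n-\tau_n)})$ tail corrections vanishes. I expect this to be routine given Theorem \ref{thm:clt_detection_size}.
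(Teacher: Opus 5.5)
Your proposal is correct and takes essentially the same route as the paper, which obtains this extension from the proof of Proposition~\ref{prop:confidence_interval}: a multivariate delta method for $g(x,y)=x/y$ applied to $(X_{t,\leq j},X_{t,\geq 1})$ from Point~2 of Theorem~\ref{thm:clt} (resp.\ Theorem~\ref{thm:clt_detection_size}), followed by a second delta method through $F_{\leq j}^{-1}$. The details you add are exactly what the paper leaves implicit: recentering $(av+\mu)Y\int_0^t P_k(t-s)e^{\lambda_0 s}\,ds$ to $Y\mu_k e^{\lambda_0 t}$, the normalization $n^{-\lambda_0/\lambda_1}e^{\lambda_0\tau_n}\to W^{-\lambda_0/\lambda_1}$, and the caveat that $F'_{\leq j}(b)\neq 0$ must be assumed or checked.
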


\begin{figure}
    \centering
    \includegraphics[width=\linewidth]{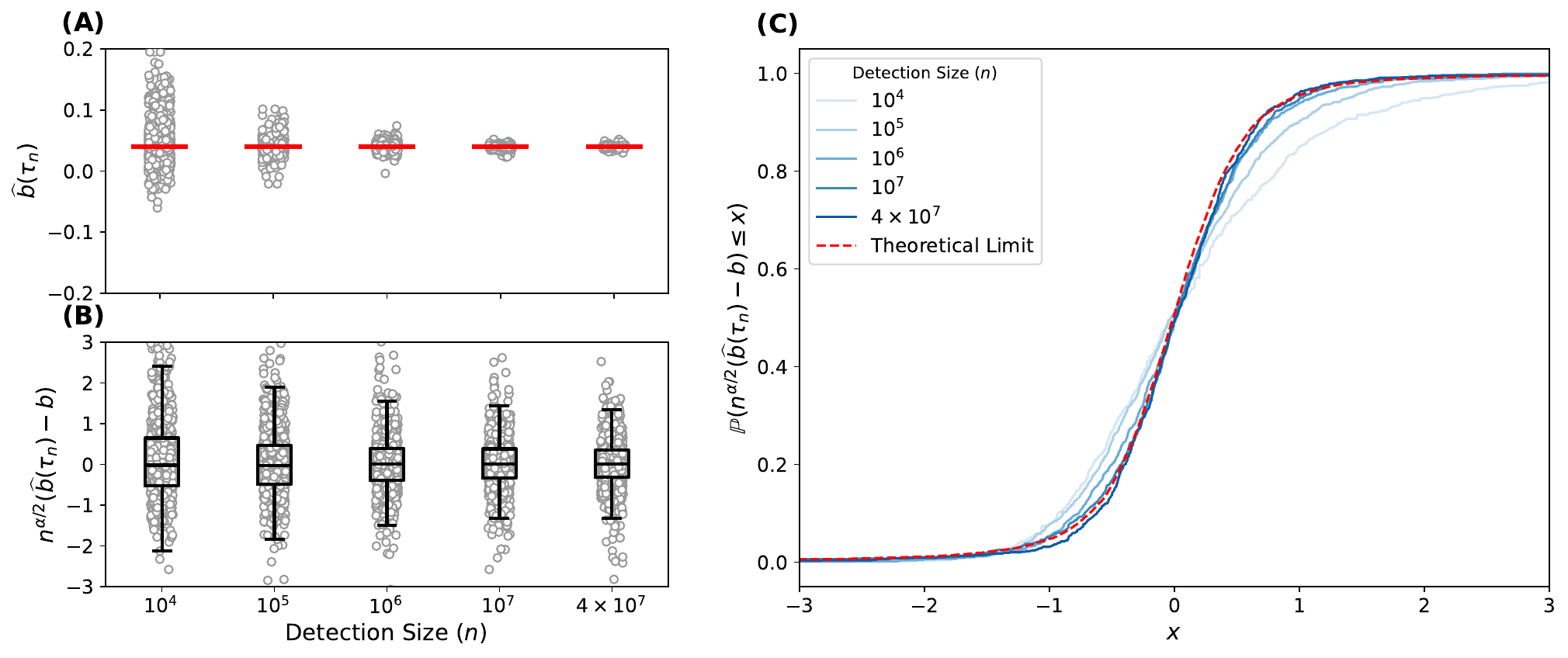}
    \caption{First and second-order asymptotics for the estimator $\widehat{b}$ at large detection sizes. \textbf{(A)} Red lines show the true fitness enhancement $b$. Each grey dot represents the estimator $\widehat{b}(\tau_n)$ computed from simulation. \textbf{(B)} Scaled deviations of $\widehat{b}(\tau_n)$ are shown. Boxplots summarize the first, second, and third quantile of the empirical distribution, with whiskers encircling the points lying within 1.5 IQRs of the first and third quantiles. \textbf{(C)} Empirical CDF of the scaled deviation of $\widehat{b}(\tau_n)$ are shown and compared to the distribution $\mathcal{N}^{'}$.}
    \label{fig:b_1_consistency}
\end{figure}

In fact $\sigma_{b,\leq j}^2$ asymptotically increases to $\infty$ as $j\to\infty$ at a known rate:

\begin{proposition}\label{prop:asymp_var_estimators}
    \[\sigma_{b,\leq j}^2=\Theta_{j\to\infty}(j^{\lambda_0/\lambda_1}(\log j)^{-2}).\]
\end{proposition}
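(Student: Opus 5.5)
The plan is to rewrite $\sigma_{b,\leq j}^2$ as a ratio $N_j/D_j$, where $N_j$ is the large-parenthesis numerator and $D_j=(F_{\leq j}'(b)\mu_{\geq 1})^2$. I would show separately that $N_j\asymp j^{-\lambda_0/\lambda_1}$ and that $|F'_{\leq j}(b)|\asymp j^{-\lambda_0/\lambda_1}\log j$ as $j\to\infty$. The claim $\sigma^2_{b,\leq j}=\Theta(j^{\lambda_0/\lambda_1}(\log j)^{-2})$ then follows immediately. Throughout, write $\alpha=\lambda_0/\lambda_1$ and $r_j=\mu_{\leq j}/\mu_{\geq 1}$, so that $1-r_j=\mu_{>j}/\mu_{\geq 1}$.

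\emph{Numerator.} By Theorem \ref{thm:clt}(2), $N_j$ is exactly $Y^{-1}$ times the conditional variance of
\[A_j:=X_{\infty,\leq j}-r_jX_{\infty,\geq 1}=(1-r_j)X_{\infty,\leq j}-r_jX_{\infty,>j}.\]
Expanding gives
\[N_j=(1-r_j)^2V_{\leq j}-2r_j(1-r_j)C_j+r_j^2V_{>j},\]
where $V_{\leq j}=\sum_{k,l\leq j}\sigma_{k,l}$, $V_{>j}=\sum_{k,l>j}\sigma_{k,l}$ and $C_j=\sum_{k\leq j<l}\sigma_{k,l}$.

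First, $V_{\leq j}\le V_{\geq 1}<\infty$ uniformly in $j$. Second, from the geometric form of $P_k(s)$ and a Laplace-type evaluation of $\mu_k=(av+\mu)\int e^{-\lambda_0 s}P_k(s)\,ds$, we get $\mu_k\asymp k^{-1-\alpha}$, hence $1-r_j\asymp j^{-\alpha}$. Third, I would use \eqref{eq:sigma_asymptotic} and the diagonal asymptotic, together with the change of variables $k=jx$, $l=jy$. This gives
\[V_{>j}\sim j^{-\alpha}\Big(C\iint_{x,y>1}\frac{dx\,dy}{xy(x+y)^{\alpha}}+\frac{C_2}{\alpha}\Big),\]
so $V_{>j}\asymp j^{-\alpha}$ once the leading constant is shown to be positive. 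Positivity holds because, for large $k,l$, every $\sigma_{k,l}$ is eventually positive provided $C,C_2>0$, which I would read off from the explicit Lemma defining $\sigma_{j,k}$. Fourth, Cauchy--Schwarz gives $|C_j|\le\sqrt{V_{\leq j}V_{>j}}=O(j^{-\alpha/2})$. Consequently the first two terms of $N_j$ are $O(j^{-2\alpha})$ and $O(j^{-3\alpha/2})$, and $N_j\sim V_{>j}\asymp j^{-\alpha}$.

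\emph{Denominator.} Since $\sum_k F_k\equiv 1$, we have $F_{\leq j}'(b)=-\partial_\ell F_{>j}(\ell)|_{\ell=b}$. I would use the integral representation
\[F_{>j}(\ell)=\frac{\int_0^\infty e^{-\lambda_0 s}(1-\alpha_s)\beta_s^{\,j}\,ds}{\int_0^\infty e^{-\lambda_0 s}(1-\alpha_s)\,ds},\]
where $\alpha_s,\beta_s$ are the standard birth--death generating parameters with birth rate $a+\ell$ and death rate $d$, and $\beta_s\to 1$ with $1-\beta_s\asymp e^{-\lambda_1(\ell) s}$. I would then differentiate under the integral in $\ell$. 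The numerator is dominated by $s\approx\log j/\lambda_1$. On that range, $\partial_\ell\log\beta_s^{\,j}=j\,\partial_\ell\log\beta_s$, and this behaves like $-j\,s\,e^{-\lambda_1 s}\cdot(\text{const})$, because $\partial_\ell$ hits the exponent $\lambda_1(\ell)=a+\ell-d$. The factor $s$ produces an extra $\log j/\lambda_1$ compared with $F_{>j}(\ell)\asymp j^{-\lambda_0/\lambda_1(\ell)}$. The derivative of the normalizing integral contributes only $O(j^{-\alpha})$. The outcome should be $\partial_\ell F_{>j}(b)\sim -c\,j^{-\alpha}\log j$ with $c>0$; this is consistent with formally differentiating $j^{-\lambda_0/(\ell+a-d)}$.

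\emph{Main obstacle.} The hardest step is making this differentiation rigorous. I need uniform control of the Laplace asymptotics after differentiating in $\ell$, and I need the $\log j$ term to have a nonvanishing coefficient that is not cancelled by lower-order terms. I would first substitute $u=j e^{-\lambda_1 s}$, which turns the integral into $j^{-\alpha}\int u^{\alpha-1}(\cdots)e^{-cu}\,du$ plus corrections. The $\ell$-dependence then appears explicitly as $j^{-\lambda_0/\lambda_1(\ell)}$ times a smooth function of $\ell$. Dominated convergence would justify differentiating the correction terms, and each of those is $O(j^{-\alpha})$ without a logarithm. As a fallback, I would work directly with the hypergeometric expression for $F_k(\ell)$, using Gamma-ratio asymptotics $\Gamma(k)/\Gamma(k+1+\alpha)\sim k^{-1-\alpha}$ that are uniform in $\ell$ near $b$.
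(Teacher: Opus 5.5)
Your proposal is correct. The numerator half is essentially the paper's argument, but the denominator half takes a genuinely different route.

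\emph{Numerator.} With $\delta_j=1-r_j$, your three-term expansion is literally the paper's $S_{\le j,\le j}\delta_j^2-2S_{\le j,>j}(1-\delta_j)\delta_j+(1-\delta_j)^2S_{>j,>j}$. Your bound $V_{>j}\asymp j^{-\lambda_0/\lambda_1}$ comes from the same integral comparison. The only difference is that you bound the cross term by Cauchy--Schwarz instead of using $S_{\le j,>j}=o(1)$.

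\emph{Denominator, paper's route.} The paper extends $\ell\mapsto\mu_{\ge j}(\ell)$ holomorphically to a complex disk around $b$. It asserts, by ``extending'' Proposition 1 of the earlier paper, an expansion $c(\ell)j^{-\lambda_0/\lambda_1(\ell)}+R_j(\ell)$ with $R_j=O(j^{-1-\lambda_0/\lambda_1+\varepsilon})$ uniformly on that disk. It then uses Cauchy's estimate to bound $R_j'(b)$, so only the explicit leading term is differentiated.

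\emph{Denominator, your route.} Your real-variable argument needs no uniform complex expansion. It is also cleaner than you fear, because the substitution $u=je^{-\lambda_1(\ell)s}$ is exact. With $w=e^{-\lambda_1 s}=u/j$ one has $1-\alpha_s=\lambda_1/((a+\ell)-d\cdot w)$ and $\beta_s=1-\lambda_1 w/((a+\ell)-d\cdot w)$. Hence the numerator integral of $F_{>j}$ equals exactly $j^{-\lambda_0/\lambda_1(\ell)}J_j(\ell)$, where
\[
J_j(\ell)=\int_0^j u^{\lambda_0/\lambda_1(\ell)-1}\,\frac{1}{(a+\ell)-d\cdot u/j}\Bigl(1-\frac{\lambda_1(\ell)\,u/j}{(a+\ell)-d\cdot u/j}\Bigr)^{j}\,du .
\]
The bound $(\cdots)^{j}\le e^{-q_1(\ell)u}$ and dominated convergence give $J_j(b)\to\Gamma(\lambda_0/\lambda_1)(a+b)^{-1}q_1^{-\lambda_0/\lambda_1}>0$ and $\sup_j|\partial_\ell J_j(b)|<\infty$. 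This settles the non-cancellation concern you flag as the main obstacle, with no remainder term to control. You also handle the normalizing integral $\mu_{\ge 1}(\ell)$ explicitly, which the paper passes over.

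\emph{One harmless sign slip.} Since $\lambda_0/\lambda_1(\ell)$ decreases in $\ell$, $\partial_\ell F_{>j}(b)$ equals $+\frac{\lambda_0}{\lambda_1^2}j^{-\lambda_0/\lambda_1}\log j$ times a factor tending to a positive constant, up to $O(j^{-\lambda_0/\lambda_1})$. So it is positive, not negative, consistent with $F_{\le j}$ being decreasing. Only $(F'_{\le j}(b))^2$ enters $\sigma^2_{b,\le j}$, so the conclusion is unaffected.
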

\begin{proof}
    See Section \ref{app:asymp_var_estimators}.
\end{proof}

This implies that there exists a solution to the minimization problem $j^\ast\in \arg \min_{j\in\mathbb{N}} \sigma_{b,\leq j}^2$. $\widehat{b}_{\leq j^\ast}$ is the best estimator in the class $\{\widehat{b}_{\leq j}\}_{j\in\mathbb{N}}$ in terms of minimizing the asymptotic variance. At default parameters, $j^\ast=10$ (Figure \ref{fig:b_j_deviation}A). Empirical 75th and 25th quantiles behave as expected across $b_{\leq j}(\tau_n)$ fixing $n=4\cdot 10^7$ and varying $j$ (Figure \ref{fig:b_j_deviation}B).

These variance results can be used to construct asymptotic $(1-\alpha)$-confidence intervals. Writing:
\begin{align*}
    V_j(b):=\mu_{\geq 1}\sigma_{b,\leq j}^2
\end{align*}
and using $\lim_{t\to\infty}\exp(-\lambda_0 t)S_{\geq 1}(t)=Y\mu_{\geq1},\lim_{n\to\infty}n^{-\lambda_0/\lambda_1}S_{\geq 1}(\tau_n)=YW^{-\lambda_0/\lambda_1}\mu_{\geq 1}$ on $\Omega_0^\infty$:
\begin{align*}
    \widehat{b}_{\leq j}(t)\pm z_{1-\alpha/2}\sqrt{\frac{V_j(\widehat{b}_{\leq j}(t))}{S_{\geq 1}(t)}}\text{ and }\widehat{b}_{\leq j}(\tau_n)\pm z_{1-\alpha/2}\sqrt{\frac{V_j(\widehat{b}_{\leq j}(\tau_n))}{S_{\geq 1}(\tau_n)}},
\end{align*}
are asymptotic $(1-\alpha)$-confidence intervals for $b$. $z_p$ denotes the $p$th quantile of a standardized normal.

\begin{figure}
    \centering
    \includegraphics[width=\linewidth]{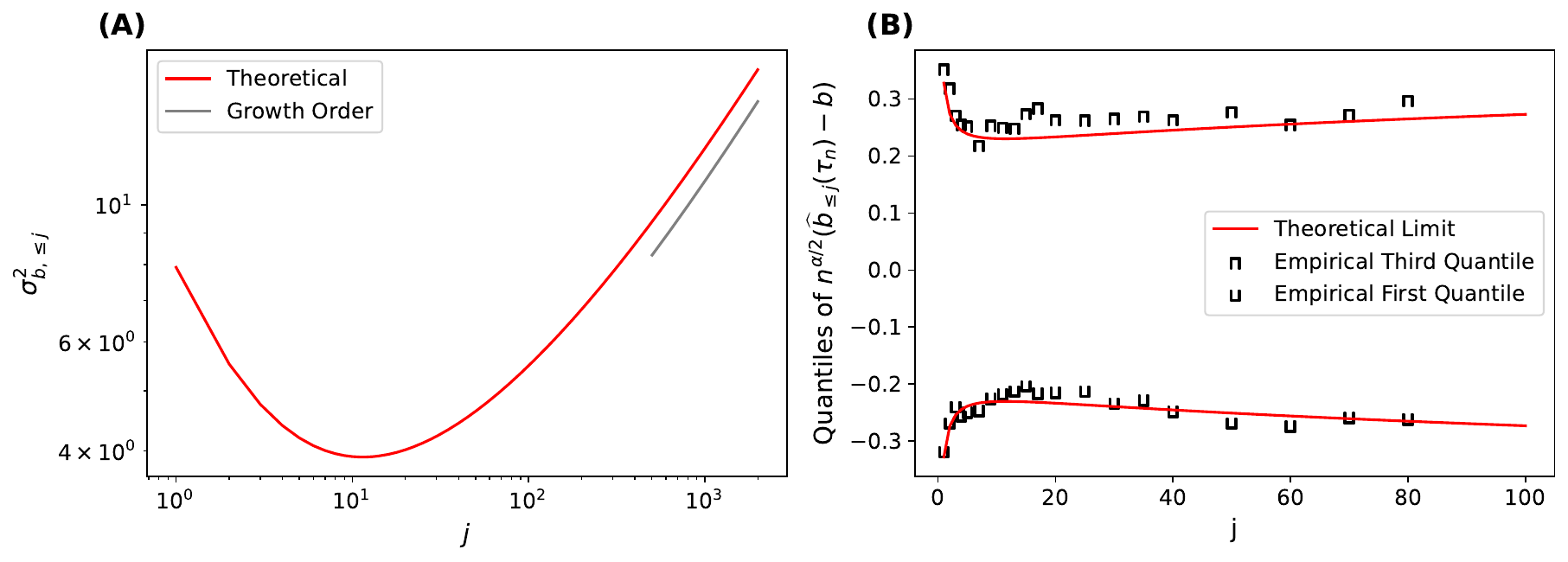}
    \caption{Limiting deviations of the class of estimators $\{\widehat{b}_{\leq j}(\tau_n)\}_{j\geq 1}$. \textbf{(A)} The deterministic part of the variance $\sigma_{b,\leq j}^2$ is shown and compared across $1\leq j\leq 2\cdot 10^3$. The growth order $\sigma_{b,\leq j}^2\asymp j^{\lambda_0/\lambda_1}(\log j)^{-2}$ is shown. \textbf{(B)} First and third quantiles are compared between simulation of the scaled deviation (for $n=4\cdot 10^7$) and the distribution $\mathcal{N}_{\leq j}'$.}
    \label{fig:b_j_deviation}
\end{figure}

\section{Discussion}

Our results describe the large time and size behavior of the SFS and apply them to obtain second-order asymptotics for estimators of the fitness enhancement. Up to constants $C_1,C_2$, we can (heuristically) write that $S_j(t)$ and $S_j(\tau_n)$ on $\Omega_0^\infty$ behave as:
\begin{align*}
    &S_j(t)\approx e^{\lambda_0 t}\frac{C_1}{j^{1+\lambda_0/\lambda_1}}+\mathcal{O}\left(e^{(\lambda_0/2)t}\frac{\sqrt{\log j}}{j^{1/2+\lambda_0/(2\lambda_1)}}\right),\\
    &S_j(\tau_n)\approx n^{\lambda_0/\lambda_1}\frac{C_2}{j^{1+\lambda_0/\lambda_1}}+\mathcal{O}\left(n^{\lambda_0/(2\lambda_1)}\frac{\sqrt{\log j}}{j^{1/2+\lambda_0/(2\lambda_1)}}\right)
\end{align*}
in the sense that $t$ (resp. $n$) is taken to be large first and then $j$.

The second-order large-time, large-$j$ behavior is due to the deviations in the Gaussian process being close to uncorrelated (Corollary \ref{cor:clt}). This comes from the fact that the variance in the deviations of a site $\sigma_{j,j}\approx j^{-\lambda_0/\lambda_1-1}$ is much more heavy-tailed (in $j$) compared to the covariance if the deviations of nearby sites, say sites $j$ and $j+1$, $\sigma_{j,j+1}\approx j^{-2-\lambda_0/\lambda_1}$.

Then, we exploited the central limit theorems on $\{S_{\geq 1}(t)\}\cup(S_j(t))_{j=1}^\infty,\{S_{\geq 1}(\tau_n)\}\cup(S_j(\tau_n))_{j=1}^\infty$ on $\Omega_0^\infty$ to obtain deviation results for estimators of the fitness increase $b$.  We can understand the location of $j^\ast$ to be contributed by two factors:  large-$j$ (1) includes data from more clones to estimate $b$ but (2) makes $F_{\leq j}(\ell)$ vary less for $\ell$ in neighborhoods of $b$ so that $F_{\leq j}^{-1}$ varies more around $F_{\leq j}(b)$. Indeed, this trade-off is seen in both subpanels of Figure \ref{fig:b_j_deviation}. Where these forces balance is where $\sigma_{b,\leq j}^2$ is minimized. We can estimate this optimal $j^\ast$ by, say, using the plug-in estimator $\widehat{j}^\ast\in \argmin_{j\in \mathbb{N}} \sigma_{\widehat{b}_{\leq j_{\text{prior}}},\leq j}^2$, where $j_{\text{prior}}\in\mathbb{N}$ is an initial guess.

\section{Proofs}\label{sec:proofs}

\subsection{Important Auxiliary Processes and Results}

For $t>0$ define the total number of mutations by time $t$ by $M(t)=\#\{i\geq 1:T_i\leq t\}$. We have:
\begin{lemma}
    $$
R_t:=M(t)-(av+\mu)\int_0^tZ_0(s)ds,
$$
is a $\mathcal{B}_t$ martingale (and thus a $\mathcal{B}_t^{\leq 1}$- and $\mathcal{I}_t$-martingale).
\end{lemma}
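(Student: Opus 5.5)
The plan is to read $M(t)$ as a counting process whose $\mathcal{B}_t$-intensity is $(av+\mu)Z_0(t^-)$, and then invoke the standard compensator (Doob--Meyer) characterization of counting processes. The intensity comes directly from the model dynamics. Given $\mathcal{B}_t$, each of the $Z_0(t)$ type-0 cells independently does two things:
\begin{itemize}
\item it divides at rate $a$, and each division produces a type-1 offspring with probability $v$, so mutations arise this way at rate $av$;
\item it mutates replication-independently at rate $\mu$.
\end{itemize}
Hence, with $h\downarrow 0$,
\[
\mathbb{P}\bigl(M(t+h)-M(t)=1\mid\mathcal{B}_t\bigr)=(av+\mu)Z_0(t)h+o(h),
\qquad
\mathbb{P}\bigl(M(t+h)-M(t)\geq 2\mid\mathcal{B}_t\bigr)=o(h).
\]
This follows from the Markov property of $(Z_0,M)$, which is a continuous-time Markov chain in which $M$ jumps by one at rate $(av+\mu)Z_0$.

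The key step is Dynkin's formula applied to the function $f(z,m)=m$. The generator $\mathcal{L}$ of $(Z_0,M)$ acts on it as $\mathcal{L}f(z,m)=(av+\mu)z$. So
\[
f(Z_0(t),M(t))-\int_0^t\mathcal{L}f(Z_0(s),M(s))\,ds=R_t
\]
is a local martingale with respect to the natural filtration of $(Z_0,M)$. This filtration is exactly $\mathcal{B}_t$, since $\mathcal{B}_t$ is generated by the type-0 process together with the mutation times.

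To upgrade from local martingale to true martingale, I would show integrability of the supremum:
\[
\mathbb{E}\Bigl[\sup_{s\le t}|R_s|\Bigr]\le \mathbb{E}[M(t)]+(av+\mu)\int_0^t\mathbb{E}[Z_0(s)]\,ds<\infty .
\]
The integral term is finite because $\mathbb{E}[Z_0(s)]=e^{\lambda_0 s}$. Finiteness of $\mathbb{E}[M(t)]$ follows from the localized identity $\mathbb{E}[M(t\wedge\sigma_k)]=(av+\mu)\,\mathbb{E}\int_0^{t\wedge\sigma_k}Z_0$, where $\sigma_k$ is the first time $Z_0$ exceeds $k$, together with monotone convergence. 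Dominated convergence along the localizing sequence then gives the martingale property.

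The only genuinely delicate point is the extension to the larger filtrations $\mathcal{B}_t^{\leq 1}$ and $\mathcal{I}_t$. These also contain the type-1 dynamics and the clone identities. Conditional on $\mathcal{B}_t$, however, the type-1 clones evolve as independent birth-death processes started at the mutation times. Their future evolution carries no information about future type-0 events or future mutations beyond what $\mathcal{B}_t$ already contains.

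More precisely, the joint process $(Z_0,M,\text{clone sizes})$ is itself Markov with respect to $\mathcal{I}_t$. Its generator applied to $f=m$ still gives $(av+\mu)z$, because type-1 events do not change $M$. Repeating the Dynkin argument with respect to $\mathcal{I}_t$ therefore directly yields that $R_t$ is an $\mathcal{I}_t$-martingale. Since $R_t$ is adapted to $\mathcal{B}_t^{\leq 1}\subseteq\mathcal{I}_t$, the tower property then gives the $\mathcal{B}_t^{\leq 1}$-martingale property.
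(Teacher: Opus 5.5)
Your proof is correct. The paper gives no argument of its own here, only a pointer to Lemma 9 of \cite{ahmed2026}. Your write-up is a standard, self-contained proof of the cited fact: Dynkin's formula for $f(z,m)=m$ with $\mathcal{L}f=(av+\mu)z$, localization, and the upgrade to a true martingale via the integrable envelope $\sup_{s\le t}|R_s|\le M(t)+(av+\mu)\int_0^t Z_0(s)\,ds$, where $\mathbb{E}[M(t)]=(av+\mu)\int_0^t e^{\lambda_0 s}\,ds$.

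Your argument also handles the filtrations better than the statement's phrasing does. The ``and thus'' suggests that the $\mathcal{B}_t^{\leq 1}$- and $\mathcal{I}_t$-martingale properties follow from the $\mathcal{B}_t$ one. But a martingale need not remain a martingale when the filtration is enlarged. Your order is the logically sound one: first prove the $\mathcal{I}_t$ case directly from the Markov property of the full individual-based process. Type-1 births and deaths change neither $M$ nor $Z_0$, so $\mathcal{L}f$ is unchanged. Then descend to $\mathcal{B}_t^{\leq 1}$ and $\mathcal{B}_t$ by the tower property. This also makes your separate $\mathcal{B}_t$ argument redundant.
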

\begin{proof}
    See Lemma 9 of \cite{ahmed2026}.
\end{proof}

We have a straightforward representation of: $$W:=\lim_{t\to\infty} e^{-\lambda_1 t}Z(t)$$
\begin{lemma}\label{lem:W_form}
    \[W=\sum_{i}e^{-\lambda_1 T_i}Y_1^{(i)},\]
    where $Y_1^{(i)}=\lim_{t\to\infty} e^{-\lambda_1 t}Z_1^{(i)}(t)$ are i.i.d. deflated exponentials which are $0$ with probability $p_1$ and exponential with rate $q_1$ with probability $q_1$
\end{lemma}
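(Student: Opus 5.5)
We decompose $Z(t)=Z_0(t)+Z_1(t)$ and show that the type-0 part is negligible after scaling by $e^{-\lambda_1 t}$. This requires $\lambda_1>\lambda_0$, which holds because $\lambda_1=a+b-d>a(1-v)-d-\mu=\lambda_0$. Since $e^{-\lambda_0 t}Z_0(t)\to Y$ almost surely, we get $e^{-\lambda_1 t}Z_0(t)\to 0$ almost surely. Hence $W=\lim_{t\to\infty}e^{-\lambda_1 t}Z_1(t)$, provided the latter limit exists. Next we write $Z_1(t)=\sum_{i:T_i\le t}Z_1^{(i)}(t-T_i)$, so that
\[
e^{-\lambda_1 t}Z_1(t)=\sum_{i:T_i\le t}e^{-\lambda_1 T_i}\,e^{-\lambda_1(t-T_i)}Z_1^{(i)}(t-T_i).
\]
Each clone is an independent supercritical linear birth-death process with rates $(a+b,d)$, and these clones are independent of the mutation times given $\mathcal{B}_\infty$. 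By the classical result (e.g.\ Theorem 1 of \cite{durrettbranching2015}), $e^{-\lambda_1 s}Z_1^{(i)}(s)\to Y_1^{(i)}$ almost surely and in $L^2$, where $Y_1^{(i)}$ has the stated deflated exponential law. So each summand converges to $e^{-\lambda_1T_i}Y_1^{(i)}$, and the remaining task is to justify exchanging the limit with the infinite sum.

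For the interchange we use domination, conditionally on $\mathcal{B}_\infty$. The process $e^{-\lambda_1 s}Z_1^{(i)}(s)$ is a nonnegative martingale with mean $1$. Doob's $L^2$ maximal inequality (the second moment of the martingale is bounded uniformly in $s$) then gives $\mathbb{E}[\sup_s e^{-\lambda_1 s}Z_1^{(i)}(s)]\le C<\infty$. Set $D_i:=\sup_s e^{-\lambda_1 s}Z_1^{(i)}(s)$; these are i.i.d.\ and independent of the $T_i$. The sum is then dominated by $\sum_i e^{-\lambda_1 T_i}D_i$, so it suffices to show this dominating sum is almost surely finite.

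We verify finiteness through its conditional expectation $C\sum_i e^{-\lambda_1T_i}=C\int_0^\infty e^{-\lambda_1 s}\,dM(s)$. Using the compensator from the martingale $R_t$, we have
\[
\mathbb{E}\Big[\int_0^\infty e^{-\lambda_1 s}dM(s)\Big]=(av+\mu)\int_0^\infty e^{-\lambda_1 s}\mathbb{E}Z_0(s)\,ds=(av+\mu)\int_0^\infty e^{(\lambda_0-\lambda_1)s}ds<\infty .
\]
Therefore $\sum_i e^{-\lambda_1T_i}D_i<\infty$ almost surely. Dominated convergence for series, applied pathwise with the indicator $\mathbf 1_{T_i\le t}\to1$, now yields
\[
e^{-\lambda_1 t}Z_1(t)\to\sum_i e^{-\lambda_1T_i}Y_1^{(i)}\quad\text{almost surely}.
\]
Combined with the negligibility of $Z_0$ established at the start, this gives the lemma.

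The main obstacle is the domination step. It needs a summable integrable envelope, and here that requires both the maximal inequality for each clone and the finiteness of $\int e^{-\lambda_1 s}dM(s)$, which is where $\lambda_1>\lambda_0$ is used. An alternative route, if preferred, is to show $L^1$ convergence via the Tonelli bound and combine it with the a.s.\ monotone structure of the martingale limits.
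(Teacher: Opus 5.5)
Your proof is correct and complete. The paper gives no argument here; it defers to Lemma 5 of \cite{ahmed2026}. Your route follows the same per-clone decomposition the paper uses elsewhere, for instance for $\zeta_t$ in Lemma \ref{lem:remainder_control}: you kill $Z_0$ via $\lambda_1-\lambda_0=b+av+\mu>0$, write $e^{-\lambda_1 t}Z_1(t)$ clone by clone, and swap limit and sum using the envelope $\sum_i e^{-\lambda_1 T_i}\sup_s e^{-\lambda_1 s}Z_1^{(i)}(s)$. By Doob's $L^2$ inequality and the compensator of $M$, that envelope has mean at most $C(av+\mu)/(\lambda_1-\lambda_0)$. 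A small bonus of your pathwise dominated-convergence version is that it proves existence of the almost-sure limit and identifies it in one step, with no separate martingale-convergence argument.
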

\begin{proof}
    See Lemma 5 of \cite{ahmed2026}.
\end{proof}

\subsection{SFS central limits theorems at large detection times}\label{app:clt_time}

Here, we aim to prove Theorem \ref{thm:clt} and Corollary \ref{cor:clt}. First, we define how our covariance kernel should look.

\begin{lemma}\label{lem:sigma_large_time}
    \begin{align}
        \sigma_{j,k}&:=\lim_{t\to\infty}\Cov(X_{t,j},X_{t,k})\label{eq:sigmajk_def}\\
        &=\left(\frac{(av+\mu)^2(a(1-v)+d+\mu)}{\lambda_0}+\mu(av+\mu)\right)\int_0^\infty e^{-\lambda_0 s}\left(P_j(s)\int_0^s P_k(z)\,dz+P_k(s)\int_0^s P_j(z)\,dz\right)\,ds\nonumber\\
        &+\mathbf{1}_{k=j}(av+\mu)\int_0^\infty P_j(s)e^{-\lambda_0 s}\,ds\label{eq:sigma_jk_second_term}
    \end{align}
    
\end{lemma}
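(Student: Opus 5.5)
We compute $\Cov(X_{t,j},X_{t,k})$ by conditioning on $\mathcal{B}_\infty$, which contains the type-0 process and the mutation times. Given $\mathcal{B}_\infty$, the clones evolve independently, and a clone initiated at time $T_i$ has size $j$ at time $t$ with probability $P_j(t-T_i)$. The conditional covariance of $S_j(t)$ and $S_k(t)$ is therefore a sum over clones of multinomial covariances,
\[
\sum_{i:T_i\le t}\Bigl(\mathbf{1}_{j=k}P_j(t-T_i)-P_j(t-T_i)P_k(t-T_i)\Bigr).
\]
Writing $A_j(t):=\sum_{i:T_i\le t}P_j(t-T_i)=\int_0^t P_j(t-s)\,dM(s)$, the law of total covariance gives
\[
\Cov(X_{t,j},X_{t,k})=e^{-\lambda_0 t}\Bigl(\EE[\text{cond.\ cov}]+\Cov\bigl(A_j(t)-cY I_j(t),\,A_k(t)-cY I_k(t)\bigr)\Bigr),
\]
where $c=av+\mu$ and $I_j(t)=\int_0^tP_j(t-s)e^{\lambda_0 s}ds$. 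The centering term $cYI_j(t)$ is $\mathcal{B}_\infty$-measurable, so it only enters the second piece.

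\textbf{First piece.} Using the compensator from the martingale $R_t$, we get $\EE\int_0^t f(t-s)\,dM(s)=c\int_0^t f(t-s)\EE Z_0(s)\,ds$ with $\EE Z_0(s)=e^{\lambda_0 s}$. After the substitution $u=t-s$ and dominated convergence (using $P_j\le 1$ and $e^{-\lambda_0 u}$ integrable), $e^{-\lambda_0 t}$ times this tends to $c\int_0^\infty e^{-\lambda_0 u}f(u)\,du$. With $f=\mathbf{1}_{j=k}P_j$ this produces the diagonal term \eqref{eq:sigma_jk_second_term}. The $-P_jP_k$ part gives $-c\int_0^\infty e^{-\lambda_0u}P_jP_k\,du$, which must cancel against a matching term from the second piece.

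\textbf{Second piece.} Decompose
\[
A_j(t)-cYI_j(t)=\int_0^tP_j(t-s)\,dR_s+c\int_0^tP_j(t-s)\bigl(Z_0(s)-Ye^{\lambda_0 s}\bigr)\,ds.
\]

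For the martingale integral, $dR$ has jumps of size $1$ and predictable quadratic variation $cZ_0(s)\,ds$. Its contribution to the covariance is therefore $c\int_0^tP_jP_k(t-s)e^{\lambda_0 s}ds$, which after scaling cancels exactly the $-P_jP_k$ term above.

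The second term requires the covariance structure of $U_s:=Z_0(s)-Ye^{\lambda_0 s}$. Since $Y e^{\lambda_0 s}=\lim_r e^{-\lambda_0(r-s)}Z_0(r)$, we have $U_s=-e^{\lambda_0 s}\int_s^\infty e^{-\lambda_0 r}\,dN_r$, where $N$ is the martingale part of $Z_0$. Its quadratic variation rate is $(a(1-v)+d+\mu)Z_0(r)$, because every type-0 event changes $Z_0$ by $\pm1$. This gives
\[
\EE[U_sU_{s'}]=e^{\lambda_0(s+s')}\,\frac{a(1-v)+d+\mu}{\lambda_0}\,e^{-\lambda_0\max(s,s')}.
\]
The cross-covariance between $\int P_j\,dR$ and $U_{s'}$ is computed the same way. $M$ and $Z_0$ jump together only at replication-dependent mutations, where the type-0 count does not change, but a replication-independent mutation at rate $\mu Z_0$ decreases $Z_0$ by one. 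This yields a covariation rate $-\mu Z_0(r)\,dr$ for $r>s'$, and this is the source of the $\mu(av+\mu)$ term.

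Plugging in, substituting $u=t-s$, $u'=t-s'$, scaling by $e^{-\lambda_0 t}$ and taking $t\to\infty$ yields double integrals of the form
\[
\int\!\!\int P_j(u)P_k(u')\,e^{-\lambda_0\max(u,u')}\,du\,du'.
\]
Splitting into the regions $u<u'$ and $u>u'$ gives the symmetric expression $\int_0^\infty e^{-\lambda_0 s}\bigl(P_j(s)\int_0^sP_k+P_k(s)\int_0^sP_j\bigr)ds$ appearing in the lemma.

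\textbf{Main obstacle.} The delicate step is the bookkeeping of the cross-covariation between $R$ and $Z_0$, including its sign and the factor $c$ versus $\mu$, and checking that it combines with the $U$--$U$ term into the single prefactor stated in the lemma. I would first verify this on the totals $j,k$ summed, where $\sum_j P_j\le 1$ allows a direct computation with $M(t)$. Exchanging limits and integrals is justified by dominated convergence, using the $L^2$ convergence of $e^{-\lambda_0 t}Z_0(t)$ to $Y$ and $P_j\le1$.
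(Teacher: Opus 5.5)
Your proposal is correct and takes essentially the same route as the paper. You use the law of total covariance, then split $\sum_{T_i\le t}P_j(t-T_i)-(av+\mu)YI_j(t)$ into $\int_0^t P_j(t-s)\,dR_s$ plus $(av+\mu)\int_0^t P_j(t-s)(Z_0(s)-Ye^{\lambda_0 s})\,ds$; the $R$--$R$ isometry term cancels the $-P_jP_k$ term, the $[L]$ computation handles the $U$--$U$ term, and the replication-independent-mutation covariation $[L,R]$ yields the $\mu(av+\mu)$ contribution. Your choice to condition on $\mathcal{B}_\infty$ rather than $\mathcal{F}_\infty$, as the paper writes, is actually the precise one, since the conditional means $\sum_{T_i\le t}P_j(t-T_i)$ depend on the mutation times.
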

\begin{proof}
    Begin by supposing $k\neq j$. Notice, that by using the law of total covariance and conditioning on $\mathcal{F}_\infty$:
  \begin{align}
    &e^{-\lambda_0 t}\Cov[S_j(t)-(av+\mu)Y\int_0^t P_j(t-s)e^{\lambda_0 s}\,ds,S_k(t)-(av+\mu)Y\int_0^t P_k(t-s)e^{\lambda_0 s}\,ds]\label{eq:cov_computation}\\
    &=-e^{-\lambda_0 t}\mathbb{E}\left[\sum_{T_i\leq t}P_j(t-T_i)P_k(t-T_i)\right]\label{eq:cov_computation_1}\\
    &+e^{-\lambda_0 t}\Cov\left[\sum_{T_i\leq t}P_j(t-T_i)-(av+\mu)Y\int_0^t P_j(t-s)e^{\lambda_0 s}\,ds,\sum_{T_i\leq t}P_k(t-T_i)-(av+\mu)Y\int_0^t P_k(t-s)e^{\lambda_0 s}\,ds\right]\label{eq:cov_computation_2}
  \end{align}
  Using Campbell's theorem, the first term can be expressed as:
  \begin{align*}
    e^{-\lambda_0 t}\mathbb{E}\left[\sum_{T_i\leq t}P_j(t-T_i)P_k(t-T_i)\right]\overset{t\to\infty}{\to}(av+\mu)\int_0^\infty P_j(s)P_k(s)e^{-\lambda_0 s}\,ds.
  \end{align*}
  The second term requires more work to express. Write $L_t=e^{-\lambda_0 t}Z_0(t)$. It is a classical result that $L_t$ is a $\mathcal{B}_t$-martingale. Now, we will need to understand the variation and cross variation of $L_t$ and $R_t$. Letting $\Phi_b,\Phi_d$ denote the birth and death times of the type-0 population (birth times not including replication-dependent mutation events and death times including replication-free mutation events). Then:
  \begin{align}
    [L]_t&=\sum_{T\in (\Phi_b\cup \Phi_d)\cap [0,t]}e^{-2\lambda_0 T}\label{eq:L_var}\\
    [R]_t&=M_t\label{eq:R_var}\\
    [L,R]_t&=-\sum_{T\in \Phi\cap \Phi_d\cap [0,t]}e^{-\lambda_0 T}\label{eq:L_R_var}
  \end{align}
  
  By adding and subtracting $(av+\mu)\int_0^t P_j(t-s)Z_0(s)\,ds$, we can write:
  \begin{align}
    &e^{-\lambda_0 t}\Cov\left[\sum_{T_i\leq t}P_j(t-T_i)-(av+\mu)Y\int_0^t P_j(t-s)e^{\lambda_0 s}\,ds,\sum_{T_i\leq t}P_k(t-T_i)-(av+\mu)Y\int_0^t P_k(t-s)e^{\lambda_0 s}\,ds\right]\label{eq:term_total}\\
    &=e^{-\lambda_0 t}\Cov\left[\int_0^t P_j(t-s)\,dR_s,\int_0^t P_k(t-s)\,dR_s\right]\label{eq:term_Pj_R_Pk_R}\\
    &+e^{-\lambda_0 t}(av+\mu)^2\Cov\left[\int_0^t P_j(t-s)(Z_0(s)-Ye^{\lambda_0 s})\,ds,\int_0^t P_k(t-s)(Z_0(s)-Ye^{\lambda_0 s})\,ds\right]\label{eq:term_Pj_ds_Pk_ds}\\
    &+(av+\mu)e^{-\lambda_0 t}\Cov\left[\int_0^t P_j(t-s)\,dR_s,\int_0^t P_k(t-s)(Z_0(s)-Ye^{\lambda_0 s})\,ds\right]\label{eq:term_Pj_R_Pk_ds}\\
    &+(av+\mu)e^{-\lambda_0 t}\Cov\left[\int_0^t P_k(t-s)\,dR_s,\int_0^t P_j(t-s)(Z_0(s)-Ye^{\lambda_0 s})\,ds\right]\label{eq:term_Pk_R_Pj_ds}.
  \end{align}
  We now look at each term individually (although the last two terms are symmetric, so WLOG, we can simply look at term \ref{eq:term_Pj_R_Pk_ds}). By using Ito's isometry with \ref{eq:R_var}:
  \begin{align*}
      \ref{eq:term_Pj_R_Pk_R}&=e^{-\lambda_0 t}\mathbb{E}\left[\int_0^t P_j(t-s)\,dR_s\int_0^t P_k(t-s)\,dR_s\right]\\
      &=(av+\mu)e^{-\lambda_0 t}\int_0^t P_j(t-s)P_k(t-s)e^{\lambda_0 s}\,ds\\
      &\overset{t\to\infty}{\to}(av+\mu)\int_0^\infty P_j(s)P_k(s)e^{-\lambda_0 s}\,ds
  \end{align*}
  Now, rewrite $Z_0(s)-Ye^{\lambda_0s}=-e^{\lambda_0 s}\int_0^\infty\mathbf{1}_{z\geq s}\,dL_z$, so that:
  \begin{align*}
    \ref{eq:term_Pj_ds_Pk_ds}&=e^{-\lambda_0 t}(av+\mu)^2\int_0^t \int_{s_1}^t P_j(t-s_1)P_k(t-s_2)e^{\lambda_0 s_1}e^{\lambda_0 s_2}\mathbb{E}[\int_0^\infty \mathbf{1}_{z\geq s_1}\,dL_z\int_0^\infty \mathbf{1}_{z\geq s_2}\,dL_z]\,ds_2\,ds_1\\
    &+e^{-\lambda_0 t}(av+\mu)^2\int_0^t \int_{s_1}^t P_k(t-s_1)P_j(t-s_2)e^{\lambda_0 s_1}e^{\lambda_0 s_2}\mathbb{E}[\int_0^\infty \mathbf{1}_{z\geq s_1}\,dL_z\int_0^\infty \mathbf{1}_{z\geq s_2}\,dL_z]\,ds_2\,ds_1.
  \end{align*}
  Using a variant of Ito's formula for the product of two (different) stochastic integrals with martingale integrators, and if $s_2>s_1$ and \ref{eq:L_var}:
  \begin{align*}
\mathbb{E}[\int_0^\infty \mathbf{1}_{z\geq s_1}\,dL_z\int_0^\infty \mathbf{1}_{z\geq s_2}\,dL_z]= \mathbb{E}[\int_{s_2}^\infty\,d[L]_z]&=(a(1-v)+d+\mu)\int_{s_2}^\infty e^{-2\lambda_0 z}e^{\lambda_0 z}\,dz\\
&=\frac{(a(1-v)+d+\mu)}{\lambda_0}e^{-\lambda_0 s_2}.
  \end{align*}
  Performing the substitution $s_1\to t-s_1$ and $s_2\to t-s_2$, we can obtain the limit of \ref{eq:term_Pj_ds_Pk_ds} as:
  \begin{align*}
    \ref{eq:term_Pj_ds_Pk_ds}\overset{t\to\infty}{\to}&\frac{(av+\mu)^2(a(1-v)+d+\mu)}{\lambda_0}\int_0^\infty \int_0^{s_1}P_j(s_1)P_k(s_2)e^{-\lambda_0 s_1}\,ds_2\,ds_1\\
    &+\frac{(av+\mu)^2(a(1-v)+d+\mu)}{\lambda_0}\int_0^\infty \int_0^{s_1}P_k(s_1)P_j(s_2)e^{-\lambda_0 s_1}\,ds_2\,ds_1
  \end{align*}
  We now calculate the cross terms:
  \begin{align*}
    \ref{eq:term_Pj_R_Pk_ds}&=-e^{-\lambda_0 t}(av+\mu)\int_0^t P_k(t-s)e^{\lambda_0 s}\mathbb{E}[\int_0^\infty \mathbf{1}_{z\leq t}P_j(t-z)\,dR_z\int_0^\infty\mathbf{1}_{s\leq z}\,dL_z]\,ds.
  \end{align*}
  Using \ref{eq:L_R_var}:
  \begin{align*}
    \mathbb{E}[\int_0^\infty \mathbf{1}_{z\leq t}P_j(t-z)\,dR_z\int_0^\infty\mathbf{1}_{s\leq z}\,dL_z]=-\mu\int_s^t P_j(t-z)\,dz,
  \end{align*}
  therefore, upon a change of variables:
  \begin{align*}
    \ref{eq:term_Pj_R_Pk_ds}\overset{t\to\infty}{\to}\mu(av+\mu)\int_0^\infty P_k(s)e^{-\lambda_0 s}\int_0^s P_j(z)\,dz\,ds.
  \end{align*}
  Therefore, similarly:
  \begin{align*}
  \ref{eq:term_Pk_R_Pj_ds}\overset{t\to\infty}{\to}\mu(av+\mu)\int_0^\infty P_j(s)e^{-\lambda_0 s}\int_0^s P_k(z)\,dz\,ds.
  \end{align*}
  
  Combining all these terms, the result follows in the case $k\ne j$. In the case $j=k$, only term \ref{eq:cov_computation_1} is affected. In particular it should instead be:
  \begin{align}
      e^{-\lambda_0 t}\mathbb{E}\left[\sum_{T_i\leq t}P_j(t-T_i)(1-P_j(t-T_i))\right]\to &(av+\mu)\int_0^\infty P_j(s)(1-P_j(s))e^{-\lambda_0 s}\,ds\nonumber\\
      &=-(av+\mu)\int_0^\infty P_j(s)^2e^{-\lambda_0 s}\,ds\nonumber\\
      &+(av+\mu)\int_0^\infty P_j(s)e^{-\lambda_0 s}\,ds,\label{eq:cov_computation_extra_term}
  \end{align}
  which has \ref{eq:cov_computation_extra_term} as an extra term compared to the $k\neq j$ case.
\end{proof}

\begin{corollary}\label{cor:sigmajk_large_jk}
    If $j\neq k$:
    \begin{align*}
        \sigma_{j,k}=\frac{q_1}{\lambda_1}\left(\frac{(av+\mu)^2(a(1-v)+d+\mu)}{\lambda_0}+\mu(av+\mu)\right)\frac{1}{jk}\left(\frac{q_1^{1-\lambda_0/\lambda_1}\Gamma(\lambda_0/\lambda_1+1)}{\lambda_1 (j+k)^{\lambda_0/\lambda_1}}+\mathcal{O}_{j+k\to\infty}(1/(j+k)^{\lambda_0/\lambda_1+1})\right)
    \end{align*}
    If $j=k$:
    \begin{align*}
        \sigma_{j,j}=(av+\mu)\frac{q_1^{1-\lambda_0/\lambda_1}\Gamma(\lambda_0/\lambda_1+1)}{\lambda_1j^{\lambda_0/\lambda_1+1}}+\mathcal{O}_{j\to\infty}(\frac{1}{j^{\lambda_0/\lambda_1+2}})
    \end{align*}
\end{corollary}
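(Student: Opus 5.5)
The plan is to compute the integrals in Lemma \ref{lem:sigma_large_time} essentially in closed form, using the explicit transition probabilities of the linear birth--death process. For one type-1 cell with birth rate $a+b$ and death rate $d$, the classical formula is
\[P_j(s)=(1-\alpha(s))(1-\beta(s))\beta(s)^{j-1},\]
where
\[\beta(s)=\frac{(a+b)(e^{\lambda_1 s}-1)}{(a+b)e^{\lambda_1 s}-d},\qquad 1-\alpha(s)=\frac{\lambda_1e^{\lambda_1 s}}{(a+b)e^{\lambda_1 s}-d},\qquad 1-\beta(s)=\frac{\lambda_1}{(a+b)e^{\lambda_1 s}-d}.\]
Two facts drive everything. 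First, $\beta$ increases from $\beta(0)=0$ to $\beta(\infty)=1$. Second, differentiating gives
\[\beta'(s)=(a+b)(1-\alpha(s))(1-\beta(s)),\]
so that $P_j(s)\,ds=\frac{1}{a+b}\beta^{j-1}\,d\beta$.

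This change of variables yields an exact occupation-time identity,
\[\int_0^s P_k(z)\,dz=\frac{\beta(s)^k}{k(a+b)},\]
with no error term. Substituting it into the first term of Lemma \ref{lem:sigma_large_time} and changing variables to $\beta$ once more, the symmetric bracket becomes
\[\frac{1}{(a+b)^2}\left(\frac1k+\frac1j\right)\int_0^1 e^{-\lambda_0 s(\beta)}\beta^{j+k-1}\,d\beta=\frac{1}{(a+b)^2}\,\frac{j+k}{jk}\,I_{j+k}.\]
Here $I_n:=\int_0^1 e^{-\lambda_0 s(\beta)}\beta^{n-1}\,d\beta$. The factor $(j+k)/(jk)$ is what produces the $\frac{1}{jk}$ structure in the statement. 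Also note that $q_1/\lambda_1=1/(a+b)$, which matches the prefactor in the statement. The extra diagonal term \eqref{eq:sigma_jk_second_term} reduces in the same way to $\frac{av+\mu}{a+b}I_j$.

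Next I will invert $\beta(s)$ explicitly:
\[e^{\lambda_1 s}=\frac{a+b-d\beta}{(a+b)(1-\beta)},\qquad\text{so}\qquad e^{-\lambda_0 s}=\left(\frac{(a+b)(1-\beta)}{a+b-d\beta}\right)^{\rho},\quad \rho:=\lambda_0/\lambda_1.\]
The function $h(\beta):=\left(\frac{a+b}{a+b-d\beta}\right)^\rho$ is smooth on $[0,1]$ with $h(1)=q_1^{-\rho}$. Hence
\[I_n=\int_0^1(1-\beta)^\rho\beta^{n-1}h(\beta)\,d\beta.\]
Taylor-expanding $h$ at $\beta=1$ as $h(\beta)=q_1^{-\rho}+O(1-\beta)$, uniformly on $[0,1]$, gives
\[I_n=q_1^{-\rho}B(n,\rho+1)+O\bigl(B(n,\rho+2)\bigr).\]
Stirling's formula then gives $B(n,\rho+1)=\Gamma(\rho+1)n^{-\rho-1}(1+O(1/n))$ and $B(n,\rho+2)=O(n^{-\rho-2})$. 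Plugging $n=j+k$ into the off-diagonal expression yields the stated leading term $\frac{1}{jk(j+k)^{\rho}}$ with error $O\bigl(\frac{1}{jk(j+k)^{\rho+1}}\bigr)$, after collecting the constants $(a+b)^{-2}$, $q_1^{-\rho}$ and $\Gamma(\rho+1)$.

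For $j=k$ I will compare the two contributions. The first term of Lemma \ref{lem:sigma_large_time} is $\frac{2}{j}\cdot O(j^{-\rho-1})=O(j^{-\rho-2})$, so it is absorbed into the error. The extra term contributes $\frac{av+\mu}{a+b}q_1^{-\rho}\Gamma(\rho+1)j^{-\rho-1}(1+O(1/j))$. Using $1/(a+b)=q_1/\lambda_1$, this is the claimed leading order $(av+\mu)\frac{q_1^{1-\rho}\Gamma(\rho+1)}{\lambda_1j^{\rho+1}}$.

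The main obstacle is bookkeeping rather than a conceptual gap. I need to verify the derivative identity $\beta'=(a+b)(1-\alpha)(1-\beta)$ exactly. I also need to make the error in $I_n$ uniform, which means bounding $|h(\beta)-h(1)|\le C(1-\beta)$ on all of $[0,1]$ rather than only near $1$. Finally, the numerical constants must be tracked so that they match those displayed in the corollary; I would re-derive them from $(a+b)^{-2}=q_1^2/\lambda_1^2$ at the end.
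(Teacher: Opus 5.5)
Your proposal is correct and follows essentially the same route as the paper. The substitution $u=\beta(z)$, with $\beta'=(a+b)P_1=\frac{\lambda_1}{q_1}P_1$, gives $\int_0^sP_k=\frac{q_1}{\lambda_1 k}\beta(s)^k$, so $P_j\beta^k=P_{j+k}$ reduces both terms to $\int_0^\infty e^{-\lambda_0 s}P_n(s)\,ds$, and on the diagonal the extra term dominates; the only difference is that you derive $\int_0^\infty e^{-\lambda_0 s}P_n(s)\,ds=\frac{q_1^{1-\lambda_0/\lambda_1}\Gamma(\lambda_0/\lambda_1+1)}{\lambda_1 n^{\lambda_0/\lambda_1+1}}+\mathcal{O}(n^{-\lambda_0/\lambda_1-2})$ yourself via the Beta-integral representation (your constants check out), whereas the paper cites Proposition 1 of \cite{ahmed2026}.
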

\begin{proof}
    Begin with $j\neq k$. Firstly, let:
    \begin{align*}
        \alpha(z)=\frac{de^{\lambda_1 z}-d}{(a+b)e^{\lambda_1 z}-d},\beta(z)=\frac{(a+b)e^{\lambda_1 z}-(a+b)}{(a+b)e^{\lambda_1 z}-d}.
    \end{align*}
    Thus:
    \[P_j(z)=(1-\alpha(z))(1-\beta(z))\beta(z)^{j-1}.\]
    by Section 3, Eq. 8 of \cite{durrettbranching2015}.
    
    Now remark:
    \begin{align*}
        d\beta&=\lambda_1\left(\frac{(a+b)e^{\lambda_1 z}}{(a+b)e^{\lambda_1 z}-d}-\frac{(a+b)^2e^{2\lambda_1 z}-(a+b)^2e^{\lambda_1 z}}{((a+b)e^{\lambda_1 z}-d)^2}\right)\,dz\\
        &=\frac{\lambda_1}{q_1}\left(\frac{(a+b-d)^2e^{\lambda_1 z}}{((a+b)e^{\lambda_1 z}-d)^2}\right)\,dz\\
        &=\frac{\lambda_1}{q_1}P_1(z)\,dz.
    \end{align*}
    Thus, using the change of variables $u=\beta(z)$:
    \[\int_0^s P_k(z)\,dz=\frac{q_1}{\lambda_1}\int_0^{\beta(s)}u^{k-1}\,du=\frac{q_1}{\lambda_1 k}\beta(s)^k.\]
    Therefore:
    \begin{align}
        \int_0^\infty e^{-\lambda_0 s}P_j(s)\int_0^s P_k(z)\,dz&=\frac{q_1}{\lambda_1 k}\int_0^\infty e^{-\lambda_0 s}P_{j+k}(s)\,ds\nonumber\\
        &=\frac{q_1}{\lambda_1 k}\left(\frac{q_1^{1-\lambda_0/\lambda_1}\Gamma(\lambda_0/\lambda_1+1)}{\lambda_1 (j+k)^{\lambda_0/\lambda_1+1}}+\mathcal{O}(1/(j+k)^{\lambda_0/\lambda_1+2})\right).\label{eq:part_sigmajk_asymp}
    \end{align}
    The last equality comes from Proposition 1 of \cite{ahmed2026}. Using \ref{eq:sigmajk_def}, the above \ref{eq:part_sigmajk_asymp}, and the above swapping $j$ and $k$ concludes the result for this case.

    In the case $j=k$, the second term \ref{eq:sigma_jk_second_term} dominates \ref{eq:sigmajk_def} for large $j$.
\end{proof}
\begin{remark}\label{rem:sigma_large_time_unif}
    A consequence of Lemma \ref{lem:sigma_large_time} and Corollary \ref{cor:sigmajk_large_jk} is that
    \begin{align}\label{eqn:2ndMomentX}
    \sup_{t<\infty}E[X_{t,j}^2]=\mathcal{O}_{j\to\infty}(1/j^{\lambda_0/\lambda_1+1}).
    \end{align}
    This is because as per Lemma \ref{lem:sigma_large_time}, we may decompose $\sigma_{j,j}$ into terms \ref{eq:term_Pj_R_Pk_R}, \ref{eq:term_Pj_ds_Pk_ds}, \ref{eq:term_Pj_R_Pk_ds}, \ref{eq:term_Pk_R_Pj_ds}, and \ref{eq:cov_computation_extra_term}, each of whose magnitudes are increasing and whose limits are $\mathcal{O}_{j\to\infty}(1/j^{\lambda_0/\lambda_1+1})$
\end{remark}
To establish Theorem \ref{thm:clt}, we first prove weak convergence with respect to the product topology, and then prove weak convergence with respect to the $\ell^p$ topology by showing that a tightness condition is satisfied. First, we show uniform fourth-moment control over $X_{t,j}$, which will be needed for the finite-dimensional convergence argument.

\begin{lemma}\label{lem:control_fourth_moment}
    \[\sup_{t\geq 0}\mathbb{E}[|X_{t,j}|^4]<\infty.\]
\end{lemma}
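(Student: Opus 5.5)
We bound $\mathbb{E}[X_{t,j}^4]$ through the same decomposition used in Lemma \ref{lem:sigma_large_time}. Write $S_j(t)-(av+\mu)Y\int_0^tP_j(t-s)e^{\lambda_0 s}\,ds=A_t+B_t+C_t$, where
\begin{align*}
A_t&=\sum_{T_i\leq t}\left(\mathbf{1}_{Z_1^{(i)}(t-T_i)=j}-P_j(t-T_i)\right),\qquad B_t=\int_0^tP_j(t-s)\,dR_s,\\
C_t&=(av+\mu)\int_0^tP_j(t-s)\left(Z_0(s)-Ye^{\lambda_0 s}\right)ds.
\end{align*}
Since $(x+y+z)^4\leq 27(x^4+y^4+z^4)$, it suffices to prove $\sup_t e^{-2\lambda_0 t}\mathbb{E}[A_t^4]<\infty$, and likewise for $B_t$ and $C_t$. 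Throughout we use $\mathbb{E}[Z_0(s)^2]\lesssim e^{2\lambda_0 s}$, $\mathbb{E}[Z_0(s)]\lesssim e^{\lambda_0 s}$, and $P_j\leq 1$.

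For $A_t$, condition on $\mathcal{B}_\infty$, which fixes the mutation times. The summands are then independent, mean zero and bounded by $1$. The standard fourth-moment bound for sums of independent centered bounded variables gives $\mathbb{E}[A_t^4|\mathcal{B}_\infty]\leq M_t+3M_t^2$. Since $\mathbb{E}[M_t^2]\lesssim e^{2\lambda_0 t}$ (from $M_t=R_t+(av+\mu)\int_0^tZ_0$, $\mathbb{E}[R_t^2]=\mathbb{E}[M_t]$, and the $L^2$ bound on $Z_0$), we get $e^{-2\lambda_0 t}\mathbb{E}[A_t^4]=\mathcal{O}(1)$.

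For $B_t$, fix $t$ and consider the martingale $N_u=\int_0^{u\wedge t}P_j(t-s)\,dR_s$. This is a purely discontinuous martingale with jumps of size $\leq 1$ and $[N]_t\leq M_t$. By Burkholder--Davis--Gundy, $\mathbb{E}[N_t^4]\lesssim\mathbb{E}[[N]_t^2]\leq\mathbb{E}[M_t^2]\lesssim e^{2\lambda_0 t}$.

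For $C_t$, use $Z_0(s)-Ye^{\lambda_0 s}=-e^{\lambda_0 s}(L_\infty-L_s)$ and Minkowski's inequality in $L^4$:
\[
\|C_t\|_4\lesssim\int_0^t e^{\lambda_0 s}\,\|L_\infty-L_s\|_4\,ds .
\]
Applying BDG to the martingale $L$ on $[s,\infty)$, with \ref{eq:L_var}, gives $\mathbb{E}[(L_\infty-L_s)^4]\lesssim\mathbb{E}\big[\big(\sum_{T\geq s}e^{-2\lambda_0 T}\big)^2\big]$. This sum is at most $\int_s^\infty e^{-2\lambda_0 u}\,dN_u$, where $N$ counts type-0 events and has compensator $(a(1-v)+d+\mu)Z_0(u)\,du$. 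The second moment of this integral is bounded by the square of its compensator plus the predictable variation term, namely
\[
\int_s^\infty\!\!\int_s^\infty e^{-2\lambda_0(u+w)}\,\mathbb{E}[Z_0(u)Z_0(w)]\,du\,dw+\int_s^\infty e^{-4\lambda_0 u}\,\mathbb{E}[Z_0(u)]\,du\lesssim e^{-2\lambda_0 s}.
\]
Hence $\|L_\infty-L_s\|_4\lesssim e^{-\lambda_0 s/2}$, and so $\|C_t\|_4\lesssim\int_0^t e^{\lambda_0 s/2}\,ds\lesssim e^{\lambda_0 t/2}$. This is exactly the required order.

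I expect the $C_t$ term to be the main obstacle, since it requires fourth-moment tail control of the type-0 martingale $L$. It is handled by BDG together with the second-moment bounds on $Z_0$. The $A_t$ and $B_t$ terms reduce to the routine bound $\mathbb{E}[M_t^2]=\mathcal{O}(e^{2\lambda_0 t})$.
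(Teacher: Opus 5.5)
Your proof is correct and follows the paper's route: the same three-term decomposition, a conditional (on $\mathcal{B}_\infty$) fourth-moment bound for the centered clone indicators that reduces to $\mathbb{E}[M_t^2]=\mathcal{O}(e^{2\lambda_0 t})$, BDG for the $dR$ integral, and Minkowski's inequality for the $Z_0(s)-Ye^{\lambda_0 s}$ term. The differences are local and in your favor. You apply BDG directly to $\int_0^{\cdot\wedge t}P_j(t-s)\,dR_s$ rather than integrating by parts first, which avoids the boundary term $R_t$ that the paper omits when $j=1$. You also derive the rate $\|Y-e^{-\lambda_0 s}Z_0(s)\|_4\lesssim e^{-\lambda_0 s/2}$ via BDG on $L$, whereas the paper simply asserts it.
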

\begin{proof}
    See Appendix Section \ref{sec:control_fourth_moment}
\end{proof}

\begin{proposition}\label{prop:weak_topology_convergence}
    \begin{align}
        (X_{t,j})_{j=1}^\infty|\Omega_0^\infty\overset{t\to\infty}{\Rightarrow}(X_{\infty,j})_{j=1}^\infty|\Omega_0^\infty
    \end{align}
    $\mathcal{I}_\infty$-stably, under the product topology (or, equivalently, in finite dimensions).
\end{proposition}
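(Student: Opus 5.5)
We fix finitely many indices $j_1,\dots,j_m$ and a vector $\theta\in\mathbb{R}^m$, and prove the stable convergence of the scalar $\theta\cdot X_t:=\sum_r\theta_r X_{t,j_r}$. By Cramér--Wold, together with the standard characterization of stable convergence through conditional characteristic functions, it suffices to show that for every bounded $\mathcal{I}_\infty$-measurable $Q'$ (it is enough to take $Q'$ bounded and $\mathcal{I}_T$-measurable for some finite $T$, since $\bigcup_T\mathcal{I}_T$ generates $\mathcal{I}_\infty$ and $L^1$ approximation applies),
\[
\mathbb{E}\bigl[Q'\mathbf{1}_{\Omega_0^\infty}e^{i\theta\cdot X_t}\bigr]\to \mathbb{E}\Bigl[Q'\mathbf{1}_{\Omega_0^\infty}\exp\Bigl(-\tfrac12 Y\textstyle\sum_{r,s}\theta_r\theta_s\sigma_{j_r,j_s}\Bigr)\Bigr].
\]
The route is the decomposition \eqref{eq:fundamental_decomposition}. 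We fix $T$ and write $S_j(t)$ as the contribution of clones born before $T$ plus the sum over the $Z_0(T)$ type-0 individuals alive at $T$ of the SFS of the mutant clones their descendants generate after $T$. For $Y$, we use $Y=e^{-\lambda_0 T}\sum_{i=1}^{Z_0(T)}Y^{(i)}$, where $Y^{(i)}$ is the martingale limit of the $i$th subtree. The centering then splits correspondingly, since $\int_0^t P_j(t-s)e^{\lambda_0 s}ds=e^{\lambda_0 T}\int_0^{t-T}P_j(t-T-u)e^{\lambda_0 u}du+\int_0^T(\cdots)$.

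This gives $X_{t,j}=e^{-\lambda_0T/2}\sum_{i\le Z_0(T)}X^{T,(i)}_{t-T,j}+E_{t,T,j}$. Here the error $E_{t,T,j}$ collects the pre-$T$ clones (at most $M_T$ clones, whose count $S_j$-contribution is bounded by $M_T$) and the centering piece $(av+\mu)Y\int_0^T P_j(t-s)e^{\lambda_0 s}ds$. Both are multiplied by $e^{-\lambda_0 t/2}$, so they vanish as $t\to\infty$ for fixed $T$, almost surely. Conditionally on $\mathcal{I}_T$, the summands are i.i.d. copies of $X_{t-T,j}$ started from a single type-0 cell. They have mean zero: $\mathbb{E}[S_j(s)]=(av+\mu)\int_0^s P_j(s-u)\mathbb{E}Z_0(u)du$ and $\mathbb{E}[Y]=1$ from one cell, both unconditionally. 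By Lemma \ref{lem:sigma_large_time}, their covariances converge to $\sigma_{j,k}$ as $t\to\infty$.

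The core step is a conditional Lindeberg/Lyapunov CLT for the triangular array $\{e^{-\lambda_0T/2}X^{T,(i)}_{t-T}\}_{i\le Z_0(T)}$ given $\mathcal{I}_T$. The conditional variance is $e^{-\lambda_0T}Z_0(T)\,\theta^\top\Sigma_{t-T}\theta$, where $\Sigma_s$ is the finite covariance matrix of $X_s$. Letting first $t\to\infty$, this tends to $e^{-\lambda_0T}Z_0(T)\theta^\top\sigma\theta$, and then as $T\to\infty$ it tends to $Y\theta^\top\sigma\theta$. The Lyapunov term is $e^{-2\lambda_0T}Z_0(T)\sup_s\mathbb{E}|\theta\cdot X_s|^4$, which by Lemma \ref{lem:control_fourth_moment} is $\mathcal{O}(e^{-\lambda_0T}\cdot e^{-\lambda_0T}Z_0(T))\to0$ as $T\to\infty$ on $\Omega_0^\infty$. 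The quantitative Berry--Esseen/characteristic-function bound then gives
\[
\Bigl|\mathbb{E}\bigl[e^{i\theta\cdot X_t}\mid\mathcal{I}_T\bigr]-\exp\bigl(-\tfrac12 e^{-\lambda_0T}Z_0(T)\theta^\top\Sigma_{t-T}\theta\bigr)\Bigr|\le C e^{-2\lambda_0T}Z_0(T)+o_t(1).
\]
Multiplying by $Q'\mathbf{1}_{\Omega_0^\infty}$ and taking expectations, we let $t\to\infty$ and then $T\to\infty$ (a diagonal argument with $T=T(t)\to\infty$ slowly). Dominated convergence and $e^{-\lambda_0T}Z_0(T)\to Y$ a.s. finish the proof. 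One subtlety is that $\Omega_0^\infty$ is not $\mathcal{I}_T$-measurable. We handle this by replacing it with $\{Z_0(T)>0\}$ or $\{Y>0\}$, which differ from $\Omega_0^\infty$ by null sets in the limit. On extinction, $Y=0$ and $X_t\to0$, so this replacement is harmless. The independence of the limiting Gaussian from $\mathcal{I}_\infty$ is built in, since the limit functional depends on $Q'$ only through $Y$.

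The main obstacle I expect is the mismatch between conditioning on $\mathcal{I}_T$ and the $\mathcal{I}_\infty$-measurability of $Q'$, together with the fact that $Y$ itself is not $\mathcal{I}_T$-measurable, even though the centering uses $Y$. To resolve the first, approximate $Q'$ by $\mathcal{I}_{T_0}$-measurable variables with $T_0\le T$ fixed before sending $T\to\infty$. The second is resolved by the subtree decomposition of $Y$ above, which places each $Y^{(i)}$ inside the $i$th summand and thus preserves the conditional i.i.d. mean-zero structure. A secondary technical point is justifying the interchange of the limits in $t$ and $T$. For this we use the uniform-in-$t$ bounds of Remark \ref{rem:sigma_large_time_unif} and Lemma \ref{lem:control_fourth_moment}.
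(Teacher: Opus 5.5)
Your proposal is correct and follows essentially the same route as the paper. The shared steps are: the subtree decomposition at time $T$, including splitting $Y$ over the $Z_0(T)$ subtrees; conditionally i.i.d.\ mean-zero summands given $\mathcal{I}_T$; a characteristic-function expansion controlled by the uniform fourth-moment bound; iterated limits $t\to\infty$ then $T\to\infty$; replacing $\Omega_0^\infty$ by $\{Z_0(T)>0\}$; and stability obtained by taking the test variable $\mathcal{I}_{T_0}$-measurable with $T\ge T_0$. Your $L^1$-approximation of $Q'$ in place of the paper's $\pi$--$\lambda$ argument is an inessential difference.

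One small slip: the third moment of $\theta\cdot X_s$ need not vanish, so the per-$T$ characteristic-function error is of order $e^{-3\lambda_0T/2}Z_0(T)$, as in the paper's Taylor bound, not $e^{-2\lambda_0T}Z_0(T)$. This still tends to $0$ as $T\to\infty$ on $\Omega_0^\infty$, so the argument is unaffected.
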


\begin{proof}
We provide the argument to show weak convergence and then in a remark show where the proof is adapted to show $\mathcal{I}_\infty$-stable convergence. Fix $T>0$ large. For $t>T$, we can write $Z_0(t)=\sum_{i=1}^{Z_0(T)}Z_0^{T,(i)}(t-T)$, where $Z_0^{T,(i)}(t-T)$ denotes the descendants present at time $t$ of the $i$th cell present at time $T$. Note that $Z_0^{T,(i)}$, conditional on $\mathcal{I}_T$, are i.i.d. copies of the type-0 process for $1\leq i\leq Z_0(T)$. For $t\geq T$, define $S_j^{[0,T]}(t):=\sum_{T_i\leq T}\mathbf{1}_{Z_1^{(i)}(t-T_i)=j}$ denote the contribution to the SFS from mutants produced in the interval $[0,T]$. Let $S_j^{T,(i)}(t-T)$ be the SFS of the lineage associated with $Z_0^{T,(i)}$. Note then $S_j(t)=S_j^{[0,T]}(t)+\sum_{i=1}^{Z_0(T)}S_j^{T,(i)}(t-T)$. Furthermore, remark that $Y=e^{-\lambda T}\sum_{i=1}^{Z(T)}Y^{T,(i)}$, where $Y^{T,(i)}=\lim_{t\to\infty}e^{-\lambda_0 (t-T)}Z_0^{T,(i)}(t-T)$. Therefore:
\begin{align}\label{eq:Xt_Rep}
    X_{t,j}=e^{-\lambda_0 T/2}\sum_{i=1}^{Z_0(T)}X^{T,(i)}_{t-T,j}+e^{-\lambda_0 t/2}S_j^{[0,T]}(t)-(av+\mu)e^{-\lambda_0 t/2}Y\int_0^T P_j(t-s)e^{\lambda_0 s}\,ds,
\end{align}
where:
\begin{align*}
    X^{t_2,(i)}_{t_1,j}:=e^{-\lambda_0 t_1/2}\left(S_j^{t_2,(i)}(t_1)-(av+\mu)Y^{t_2,(i)}\int_0^{t_1}P_j(t_2-s)e^{\lambda_0 s}\,ds\right).
\end{align*}
The last two terms in the above representation of $X_{t,j}$ go to $0$ almost surely so that to investigate the large time behavior of $X_{t,j}$, it suffices to look at the first term. To show convergence in the product topology, it suffices to show convergence of characteristic functions of finite-dimensional projections of $(X_{t,j})_{j=1}^\infty|\Omega_0^\infty$. Fix any $n$ and sequence $j_1<j_2<\cdots<j_n$. Let $\Sigma$ be a $n\times n$ matrix with the $k_1,k_2$ entry being $\sigma_{j_{k_1},j_{k_2}}$. Let $\boldsymbol{\theta}\in\mathbb{R}^n$, $\mathbf{X}_{t-T}^{T,(i)}:=(X_{t-T,j_k}^{T,(i)})_{k=1}^n$, and $\Sigma_{t,T}$ denote the covariance matrix of $\mathbf{X}_{t-T}^{T,(i)}$. Noting $\mathbb{E}[X_{t-T,j}^{T,(i)}]=0$ for all $j$, we see:
\begin{align}
    \mathbb{E}\left[\exp\left[ie^{-\lambda_0 T/2}\sum_{i=1}^{Z_0(T)}\langle \boldsymbol{\theta},\boldsymbol{X}_{t-T}^{T,(i)}\rangle\right]|\mathcal{I}_T\right]&=\mathbb{E}\left[\exp\left[i e^{-\lambda_0 T/2} \langle \boldsymbol{\theta}, \boldsymbol{X}_{t-T}^{T,(1)}\rangle\right]\right]^{Z_0(T)}\nonumber\\
    &=\left(1-\frac{1}{2}e^{-\lambda_0 T}\boldsymbol{\theta}^\intercal\Sigma_{t,T}\boldsymbol{\theta}+o_{T\to\infty}(e^{-\lambda_0 T})\right)^{Z_0(T)}\label{eq:conditional_T_rewrite}
\end{align}
This $o_{T\to\infty}$ term is uniform in $t$. To see this, recall that by a Taylor error bound:

\begin{align*}
    &\left|\mathbb{E}\left[\exp\left[i e^{-\lambda_0 T/2} \langle \boldsymbol{\theta}, \boldsymbol{X}_{t-T}^{T,(1)}\rangle\right]\right]-1+\frac{1}{2}e^{-\lambda_0 T}\boldsymbol{\theta}^\intercal \Sigma_{t,T}\boldsymbol{\theta}\right|\\
    &\leq \frac{1}{6}e^{-3\lambda_0T/2}\mathbb{E}[|\langle \boldsymbol{\theta},\mathbf{X}_{t-T}^{T,(i)}\rangle|^3].
\end{align*}
Note that $\mathbf{X}_{t-T}^{T,(i)}\sim \mathbf{X}_{s}^{0,(1)}$, where $s=t-T$. The last term above is a finite linear combination of terms of the form $\mathbb{E}[X_{s,j_{k_{1}}}X_{s,j_{k_{2}}}X_{s,j_{k_{3}}}]$ where possibly certain $k_1,k_2,k_3$ may be the same. By Holder's inequality, this is controlled via $\mathbb{E}[X_{s,j_{k_1}}^4]^{1/4}\mathbb{E}[X_{s,j_{k_2}}^4]^{1/4}\mathbb{E}[X_{s,j_{k_3}}^4]^{1/4}$. Subsequently, each of these factors are uniformly bounded by Lemma \ref{lem:control_fourth_moment}. Therefore:
\begin{align*}
    e^{-3\lambda_0T/2}\mathbb{E}[|\langle \boldsymbol{\theta},\mathbf{X}_{t-T}^{T,(i)}\rangle|^3]\leq Ke^{-3\lambda_0 T/2},
\end{align*}
where $K$ is independent of $t,T$.

Recall that for two complex numbers $z,w$ of modulus at most $1$ and a natural number $n\geq 0$:
\[|z^n-w^n|\leq n|z-w|.\]
Thus, combining this with \ref{eq:conditional_T_rewrite}
\begin{align*}
    &\limsup_{t\to\infty}\left|\mathbb{E}[e^{ie^{-\lambda_0 T/2}\sum_{i=1}^{Z_0(T)}\langle \boldsymbol{\theta},\boldsymbol{X}_{t-T}^{T,(i)}\rangle}|\mathcal{I}_T]-e^{-\frac{1}{2}e^{-\lambda_0 T}Z_0(T)\boldsymbol{\theta}^\intercal\Sigma\boldsymbol{\theta}}\right|\\
    &\leq Z_0(T)\limsup_{t\to\infty}\left|1-\frac{1}{2}e^{-\lambda_0 T}\boldsymbol{\theta}^\intercal \Sigma_{t,T}\boldsymbol{\theta}-\exp\left[-\frac{1}{2}e^{-\lambda_0 T}\boldsymbol{\theta}^\intercal\Sigma\boldsymbol{\theta}\right]\right|+Z_0(T)o_{T\to\infty}(e^{-\lambda_0 T})\\
    &\leq Z_0(T)\frac{1}{2}e^{-\lambda_0 T}\limsup_{t\to\infty}\left|\boldsymbol{\theta}^\intercal(\Sigma_{t,T}-\Sigma)\boldsymbol{\theta}+\mathcal{O}_{T\to\infty}\left(e^{-\lambda_0 T}\right)\right|+Z_0(T)o_{T\to\infty}(e^{-\lambda_0 T}).
\end{align*}
But, $\Sigma_{t,T}\to \Sigma$ as $t\to\infty$, so that:
\begin{align*}
    \limsup_{t\to\infty}\left|\mathbb{E}[e^{ie^{-\lambda_0 T/2}\sum_{i=1}^{Z_0(T)}\langle \boldsymbol{\theta},\boldsymbol{X}_{t-T}^{T,(i)}\rangle}|\mathcal{I}_T]-e^{-\frac{1}{2}e^{-\lambda_0 T}Z_0(T)\boldsymbol{\theta}^\intercal\Sigma\boldsymbol{\theta}}\right|=Z_0(T)o_{T\to\infty}(e^{-\lambda_0 T})
\end{align*}

Letting $\mathbf{X}_t=(X_{t,j_k})_{k=1}^n$, we can thus write:
\begin{align}
    &\limsup_{t\to\infty}\left|\mathbb{E}[\left(e^{i\langle \boldsymbol{\theta},\boldsymbol{X}_t\rangle}-e^{-\frac{1}{2}Y\boldsymbol{\theta}^\intercal\Sigma\boldsymbol{\theta}}\right)\mathbf{1}_{Y>0}]\right|\nonumber\\
    &\leq \limsup_{T\to\infty}\limsup_{t\to\infty}\left|\mathbb{E}[\left(e^{i\langle \boldsymbol{\theta},\boldsymbol{X}_t\rangle}-e^{ie^{-\lambda_0 T/2}\sum_{i=1}^{Z_0(T)}\langle \boldsymbol{\theta},\boldsymbol{X}_{t-T}^{T,(i)}\rangle}\right)\mathbf{1}_{Y>0}]\right|\label{eq:time_diff_initial}\\
    &+\limsup_{T\to\infty}\limsup_{t\to\infty}\left|\mathbb{E}[\left(e^{ie^{-\lambda_0 T/2}\sum_{i=1}^{Z_0(T)}\langle \boldsymbol{\theta},\boldsymbol{X}_{t-T}^{T,(i)}\rangle}-e^{-\frac{1}{2}e^{-\lambda_0 T}Z_0(T)\boldsymbol{\theta}^\intercal\Sigma\boldsymbol{\theta}}\right)\mathbf{1}_{Y>0}]\right|\label{eq:time_diff_semideterministic}\\
    &+\limsup_{T\to\infty}\left|\mathbb{E}[\left(e^{-\frac{1}{2}e^{-\lambda_0 T}Z_0(T)\boldsymbol{\theta}^\intercal\Sigma\boldsymbol{\theta}}-e^{-\frac{1}{2}Y\boldsymbol{\theta}^\intercal\Sigma\boldsymbol{\theta}}\right)\mathbf{1}_{Y>0}]\right|.\label{eq:time_diff_final}
\end{align}
From \eqref{eq:Xt_Rep} we can see that \ref{eq:time_diff_initial} converges to $0$ via dominated convergence, we can also apply dominated convergence to see that \ref{eq:time_diff_final} converges to $0$. For term \ref{eq:time_diff_semideterministic}, we can write $\mathbf{1}_{Y>0}=\mathbf{1}_{Y>0}-\mathbf{1}_{Z_0(T)>0}+\mathbf{1}_{Z_0(T)>0}$ and observe that $\mathbb{E}[|\mathbf{1}_{Y>0}-\mathbf{1}_{Z_0(T)>0}|]=\mathbb{P}(Z_0(T)>0,Y=0)=o_{T\to\infty}(1)$ so:
\begin{align*}
    \ref{eq:time_diff_semideterministic}&\leq \limsup_{T\to\infty}\limsup_{t\to\infty}\left|\mathbb{E}[\left(e^{ie^{-\lambda_0 T/2}\sum_{i=1}^{Z_0(T)}\langle \boldsymbol{\theta},\boldsymbol{X}_{t-T}^{T,(i)}\rangle}-e^{-\frac{1}{2}e^{-\lambda_0 T}Z_0(T)\boldsymbol{\theta}^\intercal\Sigma\boldsymbol{\theta}}\right)\mathbf{1}_{Z_0(T)>0}]\right|\\
    &=\limsup_{T\to\infty}\limsup_{t\to\infty}\left|\mathbb{E}[\left(\mathbb{E}\left[e^{ie^{-\lambda_0 T/2}\sum_{i=1}^{Z_0(T)}\langle \boldsymbol{\theta},\boldsymbol{X}_{t-T}^{T,(i)}\rangle}|\mathcal{I}_T\right]-e^{-\frac{1}{2}e^{-\lambda_0 T}Z_0(T)\boldsymbol{\theta}^\intercal\Sigma\boldsymbol{\theta}}\right)\mathbf{1}_{Z_0(T)>0}]\right|\\
    &=0.
\end{align*}
\end{proof}
\begin{remark}
    Now, to show stable convergence with respect to $\mathcal{I}_\infty$, it suffices to show stable convergence with respect to $\mathcal{I}_t$ for all $t\geq 0$. To see this, note by assumption:
    \begin{align*}
        \bigcup_{t\geq 0}\mathcal{I}_t&\subseteq \{A\in \mathcal{I}_\infty:\lim_{t\to\infty}\mathbb{E}[\mathbf{1}_Ag((X_{t,j})_{j=1}^\infty)]=\mathbb{E}[\mathbf{1}_Ag((X_{\infty,j})_{j=1}^\infty)]\forall g\in C_b(\mathbb{R}^\mathbb{N})\}:=E\\
        &\subseteq \mathcal{I}_\infty,
    \end{align*}
    where $C_b(X)$ denotes the continuous, bounded functions from $X$ to $\mathbb{R}$. $E$ is a Dynkin-system and $\bigcup_{t\geq 0}\mathcal{I}_t$ a $\pi$-system. By the $\pi$-$\lambda$ theorem, $\mathcal{I}_\infty\subseteq E\subseteq \mathcal{I}_{\infty}$ and thus $E=\mathcal{I}_\infty$.

    For each fixed $s$, by another characterization of stable convergence (see Proposition 5.33 of \cite{jacod_limit_2003}), it suffices to show $((X_{t,j})_{j=1}^\infty,Q)\overset{t\to\infty}{\Rightarrow}((X_{\infty,j})_{j=1}^\infty,Q)$ in distribution, for all r.v. $Q$ which are $\mathcal{I}_s$-measurable. In the proof above, take $T\geq s$ so that $Q$ is $\mathcal{I}_T$-measurable and insert $ir\cdot Q$ for any $r\in\mathbb{R}$ wherever we have a characteristic function.
\end{remark}

We now prove the main result.

\CLTProduct*
\begin{proof}
    We prove point (1), and add a remark regarding point (2). By Proposition \ref{prop:weak_topology_convergence}, it suffices to show a tightness condition for $(X_{t,j})_{j=1}^\infty$ in $\ell^p$. Fix $p= 2$. Firstly, it is clear that for each $t>0$, $X_t\in \ell^2$ almost surely. For $X_\infty$, we note that by using Corollary \ref{cor:sigmajk_large_jk}:
    \[\mathbb{E}[|X_{\infty,j}|^2]\lesssim j^{-1-\frac{\lambda_0}{\lambda_1}}.\]
    Therefore
    \[\mathbb{E}[\Vert X_\infty\Vert_2^2]=\sum_{j=1}^\infty \mathbb{E}[|X_{\infty,j}|^2]\leq  C\sum_{j=1}^\infty j^{-1-\lambda_0/(\lambda_1)}<\infty,\]
    so that $\Vert X_\infty\Vert_2<\infty$ almost surely. Define the following precompact set, in terms of a sequence $(r_k)_k$ and positive number $M$:
    \[K_{(r_k)_k,M}=\{x\in \ell^2:\Vert x\Vert_2^2\leq M,\forall k\in\mathbb{N},\sum_{j=r_k}^\infty |x_j|^2<1/k\},\]
    for ease of notation we write $K\doteq K_{(r_k)_k,M}$.
    Take $\varepsilon>0$, and we now show tightness. By performing a union bound:
    \[\mathbb{P}\left(X_t\not\in K\right)\leq \mathbb{P}(\Vert X_t\Vert_2^2\geq M)+\sum_{k=1}^\infty \mathbb{P}(\sum_{j=r_k}^\infty |X_{t,j}|^2\geq 1/k)\]
    The first term can be controlled by taking $M_\varepsilon=\varepsilon^{-1}$ with $\varepsilon\to 0^+$ after using Markov's inequality:
    \[\mathbb{P}(\Vert X_t\Vert_2^2\geq M_\varepsilon)\lesssim 1/M_{\varepsilon}=\varepsilon\]
    The upper bound is uniform in $t$ thanks to Remark \ref{rem:sigma_large_time_unif}. The second term, after an application of Markov's inequality, can be controlled by taking $r_k\sim \varepsilon^{-1}e^k$:
    \[\sum_{k=1}^\infty \mathbb{P}(\sum_{j=r_k}^\infty |X_{t,j}|^2\geq 1/k)\leq C''\sum_{k=1}^\infty k r_k^{-\lambda_0/\lambda_1}\leq C'''\varepsilon^{\lambda_0/\lambda_1}\sum_{k=1}^\infty ke^{-k\lambda_0/\lambda_1}\lesssim \varepsilon^{\lambda_0/\lambda_1},\]
    once again invoking Remark \ref{rem:sigma_large_time_unif}. With finite-dimensional projections being continuous from $(\ell^2,\Vert\cdot\Vert_2)\to \mathbb{R}^n$, using convergence in finite-dimensions, and from Prokhorov's theorem, we get convergence in $\ell^2$. To see convergence in $\ell^{p}$ for $2<p\leq \infty$, we can use continuous mapping with the (continuous) inclusion map:
    \[I:(\ell^2,\Vert\cdot\Vert_2)\to (\ell^p,\Vert\cdot\Vert_p).\]
\end{proof}
\begin{remark}
    By adapting Lemmas \ref{lem:sigma_large_time} and \ref{lem:control_fourth_moment} (replacing $j$ or $=j$ with $\geq j$), we can obtain the following factoids:
    \begin{align}
        &\text{CoV}[X_{t,i},X_{t,\geq 1}]\overset{t\to\infty}{\to} \sum_{k=1}^\infty \sigma_{i,k},\\
        &\mathbb{V}[X_{t,\geq 1}]\overset{t\to\infty}{\to} \sum_{k_1,k_2=1}^\infty \sigma_{k_1,k_2},\\
        &\sup_{t\geq 0}\mathbb{E}[|X_{t,\geq 1}|^4]<\infty.
    \end{align}
    Then, we can show \ref{eq:extended_clt} in finite dimensions by directly adapting Proposition \ref{prop:weak_topology_convergence}. Let $V_{t,1}=X_{t,\geq 1}$ and $V_{t,k}=X_{t,k-1}$ for $k\geq 2$, and so forth. Replacing $X$ with $V$ in the above proof of tightness to show Point 1 of Theorem \ref{thm:clt} shows tightness for $(V_{t,j})_{j=1}^\infty$, as desired.
    
\end{remark}

\subsubsection{Some easy corollaries and extensions}\label{sec:easy_cors}

We can now prove two simple corollaries

\CorrelationStructure*
\begin{proof}
    We can write for $k\neq j$:
    \begin{align}
        \frac{K(j,k)}{\sqrt{K(j,j)K(k,k)}}=\frac{\sigma_{j,k}}{\sqrt{\sigma_{j,j}\sigma_{k,k}}}=\frac{\frac{C}{jk}\left(\frac{1}{(j+k)^{\lambda_0/\lambda_1}}+\mathcal{O}_{j+k\to\infty}\left(\frac{1}{(j+k)^{\lambda_0/\lambda_1+1}}\right)\right)}{\sqrt{\left(C_2\frac{1}{j^{\lambda_0/\lambda_1+1}}+\mathcal{O}_{j\to\infty}(\frac{1}{j^{\lambda_0/\lambda_1+2}})\right)\left(C_2\frac{1}{k^{\lambda_0/\lambda_1+1}}+\mathcal{O}_{k\to\infty}(\frac{1}{k^{\lambda_0/\lambda_1+2}})\right)}}\label{eq:asymptotic_var}
    \end{align}
    Then, for sufficiently large $j,k\geq N$:
    \begin{align}
        \rho(j,k)\leq C\frac{1}{\sqrt{jk}}\frac{(jk)^{\lambda_0/(2\lambda_1)}}{(j+k)^{\lambda_0/\lambda_1}}\leq C\frac{1}{\sqrt{jk}},\label{eq:asymptotic_var_bound}
    \end{align}
    with the last step by the AM-GM inequality. Let $k(j)\neq j$ for $j$ large. Thus, using \ref{eq:asymptotic_var_bound}, $\rho(j,k(j))\to 0$ as $j\to\infty$.
\end{proof}

\CLTJt*
\begin{proof}
    Once we find that:
    \begin{align}
        \sigma^2:&=\lim_{t\to\infty}e^{-t\lambda_0}j(t)^{\frac{\lambda_1+\lambda_0}{\lambda_1}}\mathbb{V}\left[S_{j(t)}(t)-(av+\mu)Y\int_0^t P_{j(t)}(t-s)e^{\lambda_0 s}\,ds\right]\nonumber\\
        &=L,\label{eq:jt_sigma_L}
    \end{align}
    then the same proof as for Proposition \ref{prop:weak_topology_convergence} also proves the claim. It remains to show \ref{eq:jt_sigma_L}.
    
    The proof is identical to Lemma \ref{lem:sigma_large_time} in terms of how $\sigma^2$ is decomposed, but certain terms vanish as $P_{j(t)}(s)\leq \frac{1}{j(t)}$ for all $s$. In particular, using the law of total variance conditioning on $\mathcal{F}_\infty$:
    \begin{align}
    &e^{-\lambda_0t}j(t)^{\frac{\lambda_1+\lambda_0}{\lambda_1}}\mathbb{V}\left[S_{j(t)}(t)-(av+\mu)Y\int_0^t P_{j(t)}(t-s)e^{\lambda_0 s}\,ds\right]\label{eq:v_computation_jt}\\
    &=e^{-\lambda_0t}j(t)^{\frac{\lambda_1+\lambda_0}{\lambda_1}}\mathbb{E}\left[\sum_{T_i\leq t}P_{j(t)}(t-T_i)(1-P_{j(t)}(t-T_i))\right]\label{eq:v_inner}\\
    &+e^{-\lambda_0t}j(t)^{\frac{\lambda_1+\lambda_0}{\lambda_1}}\mathbb{V}\left[\sum_{T_i\leq t}P_{j(t)}(t-T_i)-(av+\mu)Y\int_0^t P_{j(t)}(t-s)e^{\lambda_0 s}\,ds\right]\label{eq:v_outer}
  \end{align}

  Note that \ref{eq:v_outer} tends to $0$ as $t\to\infty$ by using the same decomposition as in \ref{eq:term_total}. Namely letting $j=k=j(t)$, each of \ref{eq:term_Pj_R_Pk_R} to \ref{eq:term_Pk_R_Pj_ds} vanish by using the bound $P_{j(t)}\leq \frac{1}{j(t)}\to 0$ as $t\to\infty$. Therefore:
  \begin{align*}
    \sigma^2&=\lim_{t\to\infty}e^{-t\lambda_0}j(t)^{\frac{\lambda_1+\lambda_0}{\lambda_1}}\mathbb{E}\left[\sum_{T_i\leq t}P_{j(t)}(t-T_i)(1-P_{j(t)}(t-T_i))\right]\\
    &=\lim_{t\to\infty}e^{-t\lambda_0}j(t)^{\frac{\lambda_1+\lambda_0}{\lambda_1}}(av+\mu)\int_0^t P_{j(t)}(t-s)\left(1-P_{j(t)}(t-s)\right)e^{\lambda_0 s}\,ds=L.
  \end{align*}
\end{proof}

\subsection{Deviations in the limiting Gaussian process}\label{app:deviation_limiting}
The main result we prove here is Theorem \ref{thm:deviation_large_j}.
\DeviationLargeJ*
\begin{proof}
    Remark that $(X_{\infty,j})_{j=1}^\infty\overset{d}{=}(-X_{\infty,j})_{j=1}^\infty$. Thus, the result follows at once if we show both:
    \begin{align}
        \limsup_{j\to\infty}\frac{X_{\infty,j}}{\sqrt{2Y\sigma_{j,j}\log j}}\leq 1,\label{eq:show_upper_gauss}\\\limsup_{j\to\infty}\frac{X_{\infty,j}}{\sqrt{2Y\sigma_{j,j}\log j}}\geq 1.\label{eq:show_lower_gauss}
    \end{align}
\end{proof}

And, indeed we show both inequalities in two lemmas.

\begin{lemma}\label{lem:slepian}
    Let $(Q_j)_{j=1}^\infty$ be a standardized Gaussian process ($\mathbb{E}[Q_j]=0,\mathbb{E}[Q_j^2]=1$). Then:
    \begin{align}
        &\limsup_{j\to\infty}\frac{Q_j}{\sqrt{2\log j}}\leq 1,\label{eq:Q_upper_gaussian}
    \end{align}
    almost surely.
\end{lemma}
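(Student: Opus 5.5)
The plan is a direct Borel--Cantelli argument. The correlation structure of $(Q_j)$ is irrelevant here: each $Q_j$ is marginally $\mathcal{N}(0,1)$, and a union bound over $j$ requires no independence. Fix $\varepsilon>0$ and consider the events
\[
A_j:=\left\{Q_j>(1+\varepsilon)\sqrt{2\log j}\right\},\qquad j\geq 2 .
\]

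The first step is the standard Gaussian (Mills ratio) tail bound: for $x>0$,
\[
\mathbb{P}(Q_j>x)\leq \frac{1}{x\sqrt{2\pi}}e^{-x^2/2}.
\]
Taking $x=(1+\varepsilon)\sqrt{2\log j}$ gives
\[
\mathbb{P}(A_j)\leq \frac{1}{(1+\varepsilon)\sqrt{4\pi\log j}}\, j^{-(1+\varepsilon)^2}\leq j^{-(1+\varepsilon)^2}
\]
for $j\geq 3$. Since $(1+\varepsilon)^2>1$, the series $\sum_j \mathbb{P}(A_j)$ converges.

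The second step applies the first Borel--Cantelli lemma, which does not need independence. Almost surely only finitely many $A_j$ occur, so
\[
\limsup_{j\to\infty} \frac{Q_j}{\sqrt{2\log j}}\leq 1+\varepsilon \quad\text{a.s.}
\]
Intersecting over the countable family $\varepsilon=1/m$, $m\in\mathbb{N}$, yields \eqref{eq:Q_upper_gaussian}.

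No step here is a real obstacle. The only care needed is that the Mills-ratio bound is stated for a standard normal, which each $Q_j$ is by assumption, so the argument is uniform in $j$. Everything genuinely delicate, namely the near-independence from Corollary \ref{cor:clt}, is deferred to the companion lower bound. When the lemma is later applied to $Q_j=X_{\infty,j}/\sqrt{Y\sigma_{j,j}}$, I would argue conditionally on $Y$ on $\{Y>0\}$, where $Q_j$ is standard Gaussian and independent of $Y$. The conditional almost-sure statement then integrates to the unconditional one, giving \eqref{eq:show_upper_gauss}.
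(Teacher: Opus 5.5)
Your proof is correct and is essentially the paper's argument. A Gaussian tail bound at level $(1+\varepsilon)\sqrt{2\log j}$ (the paper uses $\sqrt{2p\log j}$ with $p>1$) gives a summable series, and the first Borel--Cantelli lemma, which needs no independence, finishes the proof once $\varepsilon\to 0$ along a countable sequence; your parametrization also gives the bound $1+\varepsilon$ directly, whereas the paper's threshold actually gives $\sqrt{p}$ (stated there as $p$, a harmless slip).
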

\begin{proof}
    Fix any $p>1$, then:
    \begin{align*}
        \mathbb{P}(Q_j>\sqrt{2p\log j})\leq Ke^{-\frac{1}{2}\sqrt{2p\log j}^2}=Kj^{-p},
    \end{align*}
    where $K>0$ is independent of $j$. As $j^{-p}$ is summable, we immediately obtain by Borel-Cantelli:
    \begin{align*}
        \limsup_{j\to\infty}\frac{Q_j}{\sqrt{2\log j}}\leq p.
    \end{align*}
    almost surely. Taking $p\to1^+$ along a discrete sequence concludes the result. 
\end{proof}

So, \ref{eq:show_upper_gauss} is shown using the standardization $(Y_j)_{j=1}^{\infty}:=(\frac{1}{\sqrt{Y\sigma_{j,j}}}X_{\infty,j})_{j=1}^\infty$. Showing \ref{eq:show_lower_gauss} is only a little more complicated.

\begin{lemma}
    \[\limsup_{j\to\infty}\frac{Y_j}{\sqrt{2\log j}}\geq 1\]
    almost surely.
\end{lemma}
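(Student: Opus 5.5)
Conditional on $Y$ (on $\{Y>0\}$), $(Y_j)_{j\ge1}$ is a standardized centered Gaussian sequence with correlations $\rho(j,k)=\sigma_{j,k}/\sqrt{\sigma_{j,j}\sigma_{k,k}}$. These correlations do not depend on $Y$ and, by the bound \eqref{eq:asymptotic_var_bound} in the proof of Corollary \ref{cor:clt}, satisfy $|\rho(j,k)|\le C/\sqrt{jk}$ for $j\ne k$ large. The plan is a second-moment (or Slepian-comparison) Borel--Cantelli argument along a sparse subsequence, carried out conditionally on $Y$. Integrating the conditional almost-sure statement over $Y$ then gives the unconditional one.

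\textbf{Step 1: subsequence.} Fix $\varepsilon\in(0,1)$ and restrict to indices $j\in[N,2N)$ for a dyadic block $N=2^m$. On such a block every pair $j\ne k$ has $|\rho(j,k)|\le C/N$. So the block is nearly independent, with $N$ variables and correlation at most $C/N$.

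\textbf{Step 2: block maximum.} I want to show
\[
\mathbb{P}\Bigl(\max_{N\le j<2N} Y_j\le \sqrt{2(1-\varepsilon)\log N}\Bigr)
\]
is summable in $m$. The first thing to try is the normal comparison lemma (Berman's inequality). It bounds the difference between this probability and the corresponding probability for i.i.d.\ standard normals by
\[
\sum_{j\neq k}|\rho_{jk}|\,\exp\Bigl(-\frac{u^2}{1+|\rho_{jk}|}\Bigr), \qquad u=\sqrt{2(1-\varepsilon)\log N}.
\]
Since $|\rho_{jk}|\le C/N\to0$, we have $u^2/(1+|\rho_{jk}|)=u^2+o(1)$, so each term is $\lesssim (C/N)N^{-2(1-\varepsilon)}$. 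There are $N^2$ pairs, giving a total of order $N^{-1+2\varepsilon}$, which is summable over $N=2^m$ when $\varepsilon<1/2$. For the i.i.d.\ term,
\[
(1-\bar\Phi(u))^N\le \exp\bigl(-N\bar\Phi(u)\bigr)\le \exp\bigl(-cN^{\varepsilon}/\sqrt{\log N}\bigr),
\]
which is also summable. Borel--Cantelli then yields that, eventually in $m$, some $j\in[2^m,2^{m+1})$ has $Y_j>\sqrt{2(1-\varepsilon)\log 2^m}\ge\sqrt{2(1-\varepsilon)\log j}-o(1)\cdot\sqrt{\log j}$. Hence $\limsup_j Y_j/\sqrt{2\log j}\ge\sqrt{1-\varepsilon}$ a.s., and letting $\varepsilon\downarrow0$ along a sequence gives the claim.

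\textbf{Main obstacle.} The delicate point is that the bound $|\rho(j,k)|\le C/\sqrt{jk}$ is only established for $j,k$ beyond some threshold $N_0$. For the Berman bound we also need $\sup_{j\ne k}|\rho_{jk}|<1$ uniformly within blocks, so that the exponent does not degenerate; this is automatic for large blocks. One must also check that \eqref{eq:asymptotic_var_bound} really gives an absolute-value bound, which follows since the leading constants in Corollary \ref{cor:sigmajk_large_jk} are explicit and the error is of lower order. If Berman's inequality proves awkward to apply cleanly, the fallback is a direct Paley--Zygmund argument on the count
\[
\sum_{N\le j<2N}\mathbf{1}_{Y_j>u},
\]
controlling pairwise joint tails of bivariate normals with correlation $O(1/N)$ via the standard expansion of the bivariate normal tail in the correlation parameter.
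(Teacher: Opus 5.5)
Your argument is correct, but it is organized differently from the paper's.

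\textbf{The paper's route.} It does not block. It fixes $p<1$, sets $A_j=\{Y_j>p\sqrt{2\log j}\}$ and $S_{j_1,j_2}=\sum_{k=j_1}^{j_2}\mathbf{1}_{A_k}$, and runs a second-moment (Paley--Zygmund) argument over the whole range $[j_1,j_2]$. The key Gaussian input is the \emph{bivariate} normal comparison bound from Leadbetter et al. Combined with $\rho(i_1,i_2)\le C/\sqrt{i_1i_2}$, this gives $|\mathbb{P}(A_{i_1}\cap A_{i_2})-\mathbb{P}(A_{i_1})\mathbb{P}(A_{i_2})|\le C(i_1i_2)^{-1/2-p^2/(1+\varepsilon)}$. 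The double sum of these terms is bounded uniformly in $j_2$, so $\mathbb{E}[S_{j_1,j_2}^2]\le \mathbb{E}[S_{j_1,j_2}]+\mathbb{E}[S_{j_1,j_2}]^2+K$. Divergence of $\sum_k k^{-p^2}/\sqrt{\log k}$ then forces $\mathbb{P}(\bigcup_{k\ge j_1}A_k)=1$ for every $j_1$.

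\textbf{Your route.} You use the same comparison lemma in its full $N$-dimensional (Berman) form, but on dyadic blocks where all correlations are uniformly $O(1/N)$. You compare the block maximum directly with an i.i.d.\ maximum, with error $O(N^{-1+2\varepsilon})$ plus a super-polynomially small i.i.d.\ term. You then finish with the first Borel--Cantelli lemma.

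\textbf{What each buys.} Your route gives the stronger, quantitative conclusion that every sufficiently late dyadic block contains an exceedance. It also avoids the second-moment step and the monotone limit in $j_2$. The paper's route needs only two-dimensional comparisons and no blocking. Your fallback (Paley--Zygmund on the count of exceedances) is essentially the paper's proof.

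Conditioning on $Y$ is harmless. Since $\rho$ does not depend on $Y$, the sequence $(Y_j)$ is in fact a fixed standardized Gaussian sequence independent of $Y$, which is how the paper treats it.
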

\begin{proof}
    Fix $p$ within a small left neighborhood of $1$. Define the events $A_j=\{Y_j/\sqrt{2\log j}>p\}$. Define the sum $S_{j_1,j_2}=\sum_{k=j_1}^{j_2}\mathbf{1}_{A_k}$. We must show that $\{A_j\}_j$ occurs infinitely often. To do so, we claim that it suffices to prove that for every $j_1$ large:
    \begin{align}
        \lim_{j_2\to\infty}\mathbb{P}(S_{j_1,j_2}>0)= 1.\label{eq:Sj1j2}
    \end{align}
    Since $S_{j_1,j_2}$ is monotone in $j_2$, showing \ref{eq:Sj1j2} would prove that:
    \begin{align*}
        1=\mathbb{P}(\lim_{j_2\to\infty}S_{j_1,j_2}>0)=\mathbb{P}(\bigcup_{k=j_1}^\infty A_k),
    \end{align*}
    which would show $\{A_j\}_j$ occurs infinitely often.
    
    We first use the Paley-Zygmund inequality to get:
    \begin{align}
        \mathbb{P}(S_{j_1,j_2}>0)\geq\frac{(\mathbb{E}[S_{j_1,j_2}])^2}{\mathbb{E}[S_{j_1,j_2}^2]}.\label{eq:paley_zygmund}
    \end{align}
    We upper bound $\mathbb{E}[S_{j_1,j_2}^2]$ via:
    \begin{align}
        \mathbb{E}[S_{j_1,j_2}^2]&= \mathbb{E}[S_{j_1,j_2}]+2\sum_{j_1\leq i_1<i_2\leq j_2}\mathbb{P}(A_{i_1},A_{i_2})\nonumber\\
        &\leq \mathbb{E}[S_{j_1,j_2}]+\mathbb{E}[S_{j_1,j_2}]^2+2\sum_{j_1
        \leq i_1<i_2\leq j_2}|\mathbb{P}(A_{i_1},A_{i_2})-\mathbb{P}(A_{i_1})\mathbb{P}(A_{i_2})|\label{eq:upper_Sj1j2}.
    \end{align}
    The probability differences within the last sum in \ref{eq:upper_Sj1j2} are controlled via Eq. 4.2.2 of Section 4 of \cite{leadbetter_extremes_1983}. In particular:
    \begin{align}
        |\mathbb{P}(A_{i_1},A_{i_2})-\mathbb{P}(A_{i_1})\mathbb{P}(A_{i_2})|\leq \frac{1}{2\pi}|\text{CoV}[Y_{i_1},Y_{i_2}]|(1-|\text{CoV}[Y_{i_1},Y_{i_2}]|^2)^{-1/2}\exp\left(-\frac{p^2\log (i_1i_2)}{1+\text{CoV}[Y_{i_1},Y_{i_2}]}\right).\label{eq:summand_prob_bound}
    \end{align}
    Take $j_1$ so large such that $|\text{CoV}[Y_{i_1},Y_{i_2}]|=\rho(i_1,i_2)\leq \varepsilon<1$. We obtain therefore:
    \begin{align*}
        \ref{eq:summand_prob_bound}\leq C\frac{1}{\sqrt{i_1i_2}}(i_1i_2)^{-\frac{p^2}{1+\varepsilon}}.
    \end{align*}
    And so,
    \begin{align*}
        2\sum_{j_1
        \leq i_1<i_2\leq j_2}|\mathbb{P}(A_{i_1},A_{i_2})-\mathbb{P}(A_{i_1})\mathbb{P}(A_{i_2})|\leq C\left(\sum_{j_1\leq i<\infty}i^{-\frac{p^2}{1+\varepsilon}-\frac{1}{2}}\right)^2\leq K<\infty,
    \end{align*}
    as long as $p^2/(1+\varepsilon)+1/2>1$. In total, we then obtain:
    \begin{align*}
        \mathbb{E}[S_{j_1,j_2}^2]\leq \mathbb{E}[S_{j_1,j_2}]+\mathbb{E}[S_{j_1,j_2}]^2+K,
    \end{align*}
    with $K$ being independent of $j_1$. Continuing on with \ref{eq:paley_zygmund}:
    \begin{align*}
        \ref{eq:paley_zygmund}\geq \frac{1}{1+\mathbb{E}[S_{j_1,j_2}]^{-1}+K\mathbb{E}[S_{j_1,j_2}]^{-2}}.
    \end{align*}
    Thus, the conclusion follows so long as $\mathbb{E}[S_{j_1,j_2}]\to\infty$ as $j_2\to \infty$. But once again, using a Gaussian tail inequality:
    \begin{align*}
        \mathbb{P}(A_k)\geq K e^{-p^2\log k}/\sqrt{\log k}=K k^{-p^2}/\sqrt{\log k},
    \end{align*}
    which is not summable in $k$. So:
    \begin{align*}
        \mathbb{E}[S_{j_1,j_2}]=\sum_{k=j_1}^{j_2}\mathbb{P}(A_k)\overset{j_2\to\infty}{\to}\infty,
    \end{align*}
    proving the result, after taking $p\to 1^-$.
\end{proof}

\subsection{SFS central limits theorems at large detection sizes}\label{app:clt_detection_size}

Here we prove Theorem \ref{thm:clt_detection_size}. Throughout the proof of this theorem $\{M_n;n\geq 1\}$ is a positive sequence of numbers going to $0$ at a certain rate. Our strategy will be to approximate $\tau_n$ with an easier-to-work with stopping time $\tau_n'$. This will be the first time the early clones reach size $n$. Namely, define for Borel $B\subset\mathbb{R}$ the quantities $Z^{B}_1(t)=\sum_{T_i\in B}Z_1^{(i)}(t-T_i)$ and $W^B=\sum_{T_i\in B}e^{-\lambda_1 T_i}Y_1^{(i)}$. Then, $\tau_n':=\inf\{t\geq 0:Z_1^{[0,t_n]}(t)\geq n\}$. Here, $(t_n)_n$ is a sequence of numbers going to $\infty$. In preparation for this, we first note the following trivial convergence result for $Z_1^{[0,t_n]}$:

\begin{lemma}\label{lem:conv_Z_early}
    Let $(n_t)_{t\geq 0}\subseteq \mathbb{N}$ be a monotone non-decreasing net of numbers such that $n_t\to\infty$. Then:
    \begin{align*}
        e^{-\lambda_1 t}Z_1^{[0,n_t]}(t)\to W
    \end{align*}
    almost surely.
\end{lemma}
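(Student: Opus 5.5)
The plan is to compare $Z_1^{[0,n_t]}(t)$ with the full type-1 population $Z_1(t)$, using the representation of $W$ from Lemma \ref{lem:W_form}, and to exploit monotonicity in the cutoff. Since $n_t$ is non-decreasing, for any fixed $m\in\mathbb{N}$ and all $t$ large enough that $n_t\geq m$, the clones born in $[0,m]$ form a subset of those born in $[0,n_t]$, which form a subset of all clones born in $[0,t]$. Hence
\[
Z_1^{[0,m]}(t)\leq Z_1^{[0,n_t]}(t)\leq Z_1(t).
\]
We therefore get a sandwich in which the upper bound is handled by the definition of $W$. Indeed, $e^{-\lambda_1 t}Z_1(t)\to W$ almost surely, as recorded in the model section. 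In particular the limsup of $e^{-\lambda_1 t}Z_1^{[0,n_t]}(t)$ is at most $W$ almost surely.

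For the lower bound, we fix $m$. Almost surely only finitely many mutation times $T_i$ fall in $[0,m]$: they are driven by a finite-rate process on a compact interval, since $Z_0$ is non-explosive. For each such clone, $e^{-\lambda_1(t-T_i)}Z_1^{(i)}(t-T_i)\to Y_1^{(i)}$ almost surely. Summing these finitely many terms gives
\[
e^{-\lambda_1 t}Z_1^{[0,m]}(t)\to W^{[0,m]}=\sum_{T_i\le m}e^{-\lambda_1 T_i}Y_1^{(i)}
\]
almost surely. Thus $\liminf_t e^{-\lambda_1 t}Z_1^{[0,n_t]}(t)\geq W^{[0,m]}$ for every $m$, almost surely, where we intersect over the countably many $m\in\mathbb{N}$.

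Finally, we let $m\to\infty$. By Lemma \ref{lem:W_form}, $W=\sum_i e^{-\lambda_1 T_i}Y_1^{(i)}$, and this is a series of nonnegative terms. Monotone convergence therefore gives $W^{[0,m]}\uparrow W$ almost surely. Combining the liminf and limsup bounds yields the claim.

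The only mildly delicate point is to make sure the identification in Lemma \ref{lem:W_form} holds on the same almost sure event, rather than merely in distribution, so that $W^{[0,m]}\uparrow W$ and the sandwich use the same $W$. Since that lemma is stated as an equality of random variables, it can be used directly. Everything else in the argument is a finite sum or a monotone limit.
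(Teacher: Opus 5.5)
Your proof is correct and is essentially the paper's sandwich argument: bound $e^{-\lambda_1 t}Z_1^{[0,n_t]}(t)$ below by the finitely many clones born in a fixed window $[0,m]$ and above by $e^{-\lambda_1 t}Z_1(t)\to W$, then let $m\to\infty$ via Lemma \ref{lem:W_form}. Your version is more careful than the paper's in two places. You use $\liminf$ for the lower bound, which is what convergence actually requires, whereas the paper's display writes $\limsup$ on both sides. You also justify that lower bound explicitly, using the finiteness of $M(m)$ and the per-clone convergence to $Y_1^{(i)}$.
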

\begin{proof}
    Fix any $(r_m)_m$ be a sequence of numbers going to $\infty$. Notice that:
    \begin{align*}
        \sum_{i:T_i\leq r_m}e^{-\lambda_1 T_i}Y_1^{(i)}\leq\limsup_t e^{-\lambda_1 t}Z_1^{[0,n_t]}(t)\leq \limsup_s e^{-\lambda_1 t}Z_1(t)=W
    \end{align*}
    for all $m$ almost surely. Taking $m\to\infty$ concludes the result.
\end{proof}

Now, we obtain a crude large-deviation bound on $\tau_n$ and $\tau_n'$.

\begin{lemma}\label{lem:tau_n_in_range}
    For $A>0$ define $I_n^A:=[\frac{1}{\lambda_1}\ln n-A,\frac{1}{\lambda_1}\ln n+A]$, then:
    \begin{align*}
        &\lim_{A\to\infty}\mathbb{P}\left(\forall n,\,\tau_n\in I_n^A|\Omega_0^\infty\right)=1\\
        &\lim_{A\to\infty}\mathbb{P}\left(\forall n,\,\tau_n'\in I_n^A|\Omega_0^\infty\right)=1
    \end{align*}
\end{lemma}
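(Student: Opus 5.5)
We reduce both statements to almost-sure convergence of the normalized processes, combined with monotonicity of $\tau_n$ in $n$. On $\Omega_0^\infty$ we have $Y>0$ almost surely. Hence $W>0$ almost surely: at least one type-1 clone is born, and infinitely many are born since $M_t\to\infty$. Each clone has a positive survival probability $q_1$, independently, so $W\ge e^{-\lambda_1 T_i}Y_1^{(i)}>0$ for some $i$.

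By Lemma \ref{lem:W_form} and the definition of $W$, $e^{-\lambda_1 t}Z(t)\to W$ almost surely. By Lemma \ref{lem:conv_Z_early}, applied with $n_t$ chosen so that $Z_1^{[0,n_t]}$ corresponds to $Z_1^{[0,t_n]}$ along $t_n\to\infty$ (monotone), we also get $e^{-\lambda_1 t}Z_1^{[0,t_n]}(t)\to W$. Some care is needed here, since $t_n$ is indexed by $n$ rather than $t$. I would handle this by sandwiching: $Z_1^{[0,t_m]}(t)\le Z_1^{[0,t_n]}(t)\le Z(t)$ for $n\ge m$, which gives $\liminf e^{-\lambda_1 t}Z_1^{[0,t_n]}(\cdot)\ge W^{[0,t_m]}\uparrow W$.

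Next I would convert this convergence into control of the hitting times. Let $\tau_n$ be the first time $Z\ge n$. Because $Z(\tau_n)\ge n$ and jumps are of size one, $Z(\tau_n)=n$ up to an immediate death. Almost surely $\tau_n\to\infty$, so $e^{-\lambda_1\tau_n}n\to W$, i.e.
\[
\tau_n-\tfrac1{\lambda_1}\ln n\to-\tfrac1{\lambda_1}\ln W\quad\text{a.s. on }\Omega_0^\infty.
\]
The same argument applies to $\tau_n'$, using the sandwich above: $\tau_n'\ge\tau_n$, and $\tau_n'\le\inf\{t:Z_1^{[0,t_m]}(t)\ge n\}$ for $n\ge m$. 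This gives $\tau_n'-\frac1{\lambda_1}\ln n\to-\frac1{\lambda_1}\ln W$ as well, after letting $m\to\infty$ and using $W^{[0,t_m]}\to W$. Consequently the random variables
\[
D:=\sup_n\bigl|\tau_n-\tfrac1{\lambda_1}\ln n\bigr|,\qquad D':=\sup_n\bigl|\tau_n'-\tfrac1{\lambda_1}\ln n\bigr|
\]
are almost surely finite on $\Omega_0^\infty$. Each term is finite, since every $\tau_n<\infty$ on $\Omega_0^\infty$ (the population grows without bound), and the tail converges to the finite limit $-\frac1{\lambda_1}\ln W$. There is also the degenerate issue that $\tau_n'$ may be infinite when $Z_1^{[0,t_n]}$ dies out; I would exclude this using $W>0$ and $t_n\to\infty$, noting that the limiting lower bound via $W^{[0,t_m]}$ needs $W^{[0,t_m]}>0$, which holds for large $m$ almost surely.

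Finally, $\{\forall n:\tau_n\in I_n^A\}=\{D\le A\}$, and $\mathbb P(D\le A\mid\Omega_0^\infty)\to1$ as $A\to\infty$ by continuity of measure, because $D<\infty$ almost surely. The same holds for $D'$.

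The main obstacle is uniformity over all $n$ for $\tau_n'$, whose definition depends on $n$ through $t_n$. The sandwich between $\inf\{t:Z_1^{[0,t_m]}(t)\ge n\}$ and $\tau_n$, together with positivity of $W^{[0,t_m]}$ for large $m$, is what I expect to need the most care.
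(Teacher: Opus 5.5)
For $\tau_n$ your argument is correct, and it takes a different route from the paper. The paper bounds $\{\exists n:\tau_n\le t^A_{n,-}\}$ by the event $\{\sup_{t\ge0}e^{-\lambda_1 t}Z(t)\ge e^{\lambda_1 A}\}$. It bounds $\{\exists n:\tau_n>t^A_{n,+}\}$ by a small-value event for $e^{-\lambda_1 t}Z(t)$. It then uses only that this process has an a.s.\ finite supremum and, on $\Omega_0^\infty$, a positive limit. You instead use $Z(\tau_n)=n$, which holds exactly by right-continuity and unit jumps, to get the pathwise limit $\tau_n-\lambda_1^{-1}\ln n\to-\lambda_1^{-1}\ln W$, and from this $D<\infty$. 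This gives more: it is precisely the fact $|\tau_n-\lambda_1^{-1}\ln(n/W)|\to0$ that the paper later uses without proof in proving Theorem \ref{thm:clt_detection_size}. Your sandwich for $\tau_n'$ is also more careful than the paper's bare appeal to Lemma \ref{lem:conv_Z_early}.

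The gap is the claim that $D'<\infty$ a.s.\ on $\Omega_0^\infty$, which requires every $\tau_n'$ to be finite. Knowing $W>0$ and $t_n\to\infty$ gives $W^{[0,t_N]}>0$ only for some random $N$, hence $\tau_n'<\infty$ only for $n\ge N$. It says nothing about a fixed small $n$. Concretely, suppose the initial type-0 cell has no event during $[0,t_1]$ and its lineage then survives; this has probability at least $e^{-(a+d+\mu)t_1}q_0>0$. On that event no mutation occurs before $t_1$, so $Z_1^{[0,t_1]}\equiv0$ and $\tau_1'=\infty\notin I_1^A$. If $t_1\le0$, which $t_n=\frac{\beta}{\lambda_1-\lambda_0}\ln(1/M_n)$ allows, this happens surely. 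The event does not depend on $A$, so the second claim with ``$\forall n$'' is false as stated, and this step cannot be repaired. The paper's own proof has the same flaw: an asymptotic statement like Lemma \ref{lem:conv_Z_early} cannot control an ``$\exists n$'' event.

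What your argument does prove is $\lim_{N\to\infty}\lim_{A\to\infty}\mathbb{P}(\forall n\ge N,\ \tau_n'\in I_n^A\mid\Omega_0^\infty)=1$. This is all the later lemmas actually use, since they only need large $n$. On $\{W^{[0,t_N]}>0\}$, your sandwich gives $\tau_n\le\tau_n'\le\inf\{t:Z_1^{[0,t_N]}(t)\ge n\}<\infty$ for all $n\ge N$, together with $\sup_{n\ge N}|\tau_n'-\lambda_1^{-1}\ln n|<\infty$. Moreover $\mathbb{P}(W^{[0,t_N]}>0\mid\Omega_0^\infty)\to1$, because $W^{[0,t_N]}\uparrow W>0$. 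You should state and prove this version rather than claim the small-$n$ degeneracy can be excluded.
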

\begin{proof}
    We first prove the result for $\tau_n$. It suffices to show that $\tau_n$ is outside the set $I_n^A$ with probability tending to $0$, and so -- defining $t_{n,+}^A=\frac{1}{\lambda_1}\ln n+A,t_{n,-}^A=\frac{1}{\lambda_1}\ln n-A$ -- by subadditivity it suffices to show:
    \begin{align}
        \mathbb{P}(\exists n,\,\tau_n\leq t_{n,-}^A|\Omega_0^\infty)\overset{A\to\infty}{\to}0\label{eq:tau_n_less_tnminus}\\
        \mathbb{P}(\exists n,\,\tau_n> t_{n,+}^A|\Omega_0^\infty)\overset{A\to\infty}{\to}0\label{eq:tau_n_more_tnplus}
    \end{align}

    Define $Z(t):=Z_0(t)+Z_1(t)$ as the total population. One can relate the time $\tau_n$ with a fixed time $s$ by:
    \begin{align}
        \{\tau_n\leq s\}=\{\sup_{t\in [0,s]}Z(t)\geq n\}.\label{eq:tau_n_compare}
    \end{align}

    Using \ref{eq:tau_n_compare}:
    \begin{align*}
        \mathbb{P}(\exists n,\,\tau_n\leq t_{n,-}^A|\Omega_0^\infty)=\mathbb{P}(\exists n,\,\sup_{t\in [0,t_{n,-}^A]}Z(t)\geq n|\Omega_0^\infty),
    \end{align*}
    then writing $ne^{-\lambda_1 t_{n,-}^A}=e^{\lambda_1 A}$:
    \begin{align*}
        &=\mathbb{P}(\exists n,\,e^{-\lambda_1 t_{n,-}^A}\sup_{t\in [0,t_{n,-}^A]}Z(t)\geq e^{\lambda_1 A}|\Omega_0^\infty)\\
        &\leq \mathbb{P}(\exists n,\,\sup_{t\in [0,t_{n,-}^A]}e^{-\lambda_1 t}Z(t)\geq e^{\lambda_1 A}|\Omega_0^\infty)\\
        &=\frac{1}{\mathbb{P}(\Omega_0^\infty)} \mathbb{P}(\sup_{t\in [0,\infty)}e^{-\lambda_1 t}Z(t)\geq e^{\lambda_1 A}).
    \end{align*}
    Recall that $e^{-\lambda_1 t}Z(t)$ converges almost surely to a random variable which is finite almost surely (e.g., see Lemma \ref{lem:W_form} or Theorem 3.2 in \cite{cheek2018}). Therefore $\sup_{t\in [0,\infty)}e^{-\lambda_1 t}Z(t)<\infty$ almost surely so that the above probability tends to $0$ as $A\to\infty$ by monotone convergence. Therefore, \ref{eq:tau_n_less_tnminus} is shown. Following the same series of equalities as above but replacing $t_{n,-}^A$ instead of $t_{n,+}^A$ and making appropriate adjustments:
    \begin{align*}
        \mathbb{P}(\exists n,\,\tau_n> t_{n,+}^A|\Omega_0^\infty)&=\mathbb{P}(\exists n,\,e^{-\lambda_1 t_{n,+}^A}\sup_{t\in [0,t_{n,+}^A]}Z(t)\leq e^{-\lambda_1 A}|\Omega_0^\infty)\\
        &\leq \mathbb{P}(\exists n,\,e^{-\lambda_1 t_{n,+}^A}Z(t_{n,+}^A)\leq e^{-\lambda_1 A}|\Omega_0^\infty).
    \end{align*}
    On $\Omega_0^\infty$, $e^{-\lambda_1 t}Z(t)$ converges almost surely to strictly positive random variable and $Z(t)>0$ for all $t$. By monotone convergence, the above tends to $0$ as $A\to\infty$, thereby showing \ref{eq:tau_n_more_tnplus}.

    For $\tau_n'$, using that $\tau_n\leq \tau_n'$:
    \begin{align*}
        \mathbb{P}(\exists n,\,\tau_n'\leq t_{n,-}^A|\Omega_0^\infty)\leq \mathbb{P}(\exists n,\,\tau_n\leq t_{n,-}^A|\Omega_0^\infty)\overset{A\to\infty}{\to}0.
    \end{align*}
    Now, in analog to Eq. \ref{eq:tau_n_compare}, we obtain:
    \begin{align*}
        \{\tau_n'\leq s\}=\{\sup_{t\in [0,s]}Z^{[0,t_n]}(t)\geq n\}.
    \end{align*}
    So for the lower bound:
    \begin{align*}
        \mathbb{P}(\exists n,\,\tau_n'> t_{n,-}^A|\Omega_0^\infty)\leq \mathbb{P}(\exists n,\,e^{-\lambda_1 t_{n,+}^A}Z^{[0,t_n]}(t_{n,+}^A)\leq e^{-\lambda_1 A}|\Omega_0^\infty)\overset{A\to\infty}{\to}0,
    \end{align*}
    by Lemma \ref{lem:conv_Z_early}.
\end{proof}

A crucial lemma in devising the approximation to $\tau_n$ will be to investigate how quickly the total population reaches its limit.
 Define the remainder terms $\zeta_t=e^{-\lambda_1 t}Z(t)-W$ and for $(t_n)_n\subseteq [0,\infty)$ an increasing sequence going to $\infty$ define  $\eta_t^n=e^{-\lambda_1 t}Z_1^{[0,t_n]}(t)-W^{[0,t_n]}$. 
\begin{lemma}\label{lem:remainder_control}
    Let $M_n\asymp n^{-\gamma}$ for $\gamma\in (0,\min\{1/2,(\lambda_1-\lambda_0)/\lambda_1\})$
    \begin{align*}
        &\mathbb{P}(|\zeta_{\tau_n}|>M_n|\Omega_0^\infty)\overset{n\to\infty}{\to} 0\\
        &\mathbb{P}(|\eta^n_{\tau_n'}|>M_n|\Omega_0^\infty)\overset{n\to\infty}{\to}0
    \end{align*}
\end{lemma}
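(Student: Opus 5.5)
The plan is to reduce both statements to deterministic-time estimates, using the localization of Lemma \ref{lem:tau_n_in_range}. Fix $A>0$. On the event $\{\tau_n\in I_n^A\}$, whose probability tends to $1$ as $A\to\infty$ uniformly in $n$, we have
\[
|\zeta_{\tau_n}|\le \sup_{t\in I_n^A}|\zeta_t| .
\]
It therefore suffices to show that, for each fixed $A$,
\[
\mathbb{P}\Big(\sup_{t\in I_n^A}|\zeta_t|>M_n\Big)\to 0 .
\]
Since $M_n\asymp n^{-\gamma}$ and $t\asymp \frac{1}{\lambda_1}\ln n$ on $I_n^A$, this is the same as showing that $\sup_{t\in I_n^A}|\zeta_t|=o_P(e^{-\gamma\lambda_1 t_{n,-}^A})$. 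The same reduction applies to $\eta^n_{\tau_n'}$, using the second statement of Lemma \ref{lem:tau_n_in_range}.

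Next I would decompose the remainder clone by clone. Write $Z(t)=Z_0(t)+\sum_{T_i\le t}Z_1^{(i)}(t-T_i)$ and use Lemma \ref{lem:W_form}. This gives
\[
\zeta_t=e^{-\lambda_1 t}Z_0(t)+\sum_{T_i\le t}e^{-\lambda_1 T_i}\Big(e^{-\lambda_1(t-T_i)}Z_1^{(i)}(t-T_i)-Y_1^{(i)}\Big)-\sum_{T_i>t}e^{-\lambda_1 T_i}Y_1^{(i)} .
\]
The three terms are handled separately.

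\begin{itemize}
\item \emph{First term.} It is $e^{-(\lambda_1-\lambda_0)t}\,e^{-\lambda_0 t}Z_0(t)$. Since $\sup_t e^{-\lambda_0 t}Z_0(t)<\infty$ a.s., this term is $O(n^{-(\lambda_1-\lambda_0)/\lambda_1})$ on $I_n^A$, which is $o(M_n)$ because $\gamma<(\lambda_1-\lambda_0)/\lambda_1$.
\item \emph{Third term.} Conditionally on $\mathcal{B}_\infty$, its mean is of order $q_1^{-1}\ldots\int_t^\infty e^{-\lambda_1 s}(av+\mu)Z_0(s)\,ds\lesssim Y e^{-(\lambda_1-\lambda_0)t}$. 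This is again $o(M_n)$, via Markov's inequality and monotonicity in $t$.
\item \emph{Middle term.} For a single clone, $U_i(s):=e^{-\lambda_1 s}Z_1^{(i)}(s)-Y_1^{(i)}$ is a reversed martingale difference with $\mathbb{E}[U_i(s)^2]\lesssim e^{-\lambda_1 s}$. Conditionally on $\mathcal{B}_\infty$ the clones are independent and the $U_i$ are centered. Hence the conditional second moment of the middle term is
\[
\lesssim\sum_{T_i\le t}e^{-2\lambda_1 T_i}e^{-\lambda_1(t-T_i)}=e^{-\lambda_1 t}\sum_{T_i\le t}e^{-\lambda_1 T_i}\lesssim e^{-\lambda_1 t},
\]
because $\sum_i e^{-\lambda_1 T_i}<\infty$ a.s., given $M_t\asymp e^{\lambda_0 t}$ and $\lambda_1>\lambda_0$. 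This gives fluctuations of order $n^{-1/2}$, which are $o(n^{-\gamma})$ since $\gamma<1/2$.
\end{itemize}

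For $\eta^n$ the same decomposition holds with the sums restricted to $T_i\le t_n$. There is then no $Z_0$ term and no tail term, so only the martingale part remains, and the same variance bound applies. Here I would use that $t_n\le t$ for $t\in I_n^A$ once $t_n\ll\frac1{\lambda_1}\ln n$. If instead $t_n$ is larger, the missing clones $T_i\in(t,t_n]$ contribute a tail term that is bounded as above.

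The main obstacle is passing from fixed $t$ to the supremum over the window $I_n^A$ of length $2A$. For the middle term I would use Doob's $L^2$ maximal inequality. Conditionally on $\mathcal{B}_\infty$, the process $s\mapsto Z_1^{(i)}(s)e^{-\lambda_1 s}$ is a martingale for each clone. However, the sum over $T_i\le t$ adds new clones as $t$ increases, so I would treat the clones born before $t_{n,-}^A$ with Doob's inequality. The clones born inside the window would be bounded crudely: their number is $O_P(e^{\lambda_0 t})$, and each carries weight $e^{-\lambda_1 T_i}$ times a quantity with bounded second moment, which gives an $O_P(e^{-(\lambda_1-\lambda_0)t})$ contribution. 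Finally, I would conclude by conditioning on $\Omega_0^\infty$, which has positive probability, so convergence in probability transfers, and then letting $A\to\infty$.
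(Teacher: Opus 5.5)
Your proof is correct, and its skeleton is the paper's. You localize with Lemma \ref{lem:tau_n_in_range}, use the same three-term clone-by-clone decomposition of $\zeta_t$ and $\eta^n_t$, and treat the $Z_0$ term (maximal bound on $e^{-\lambda_0 t}Z_0(t)$) and the tail term (Campbell plus monotonicity in $t$) in essentially the same way. One small imprecision in the tail term: given $\mathcal{B}_\infty$ its conditional mean is just $\sum_{T_i>t}e^{-\lambda_1 T_i}$, since $\mathbb{E}Y_1^{(i)}=1$. The integral against $Z_0$ that you wrote is the compensator, not this conditional mean. Campbell's theorem gives the needed unconditional bound either way.

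The genuine difference is the middle term.
\begin{itemize}
\item \emph{The paper} never uses cancellation between clones. It applies Markov's inequality in $L^1$ and pushes the supremum inside the sum, bounding by $\sum_i e^{-\lambda_1 T_i}\sup_t|e^{-\lambda_1(t-T_i)}Z_1^{(i)}(t-T_i)-Y_1^{(i)}|$. Each clone is bounded by Doob plus the $L^2$ convergence rate, giving $Ce^{-\frac{\lambda_1}{2}(t_{n,-}^A-T_i)^+}$, and Campbell then gives $O(n^{-\min\{1/2,(\lambda_1-\lambda_0)/\lambda_1\}})$, up to a logarithm when $\lambda_0=\lambda_1/2$. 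Clones born inside $I_n^A$ need no separate treatment, because there $(t_{n,-}^A-T_i)^+=0$.
\item \emph{You} exploit that the clones are independent and centered given $\mathcal{B}_\infty$. This gives a conditional variance $\lesssim e^{-\lambda_1 t}\sum_i e^{-\lambda_1 T_i}$, hence fluctuations of order $n^{-1/2}$ whatever the value of $\lambda_0$. The cost is that the index set grows with $t$, so you must split off the window-born clones and bound them crudely. For the older clones, Doob applies only to the increments after $t_{n,-}^A$: $e^{-\lambda_1 s}Z_1^{(i)}(s)-Y_1^{(i)}$ is not itself a forward martingale. You then add back the fixed-time term at $t_{n,-}^A$, which your variance bound already covers.
\end{itemize}

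The sharper middle-term estimate gains nothing for this lemma. The $Z_0$ term, the tail term and your window-born clones already cap the rate at $n^{-(\lambda_1-\lambda_0)/\lambda_1}$, and the $n^{-1/2}$ from early clones is intrinsic. So both routes end with exactly the constraint $\gamma<\min\{1/2,(\lambda_1-\lambda_0)/\lambda_1\}$, and the paper's $L^1$ route is simply shorter.
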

\begin{proof}
    By Lemma \ref{lem:tau_n_in_range}, for fixed $\varepsilon>0$, there is $A$ large such that:
    \begin{align}
        &\mathbb{P}(|\zeta_{\tau_n}|>M_n|\Omega_0^\infty)\leq \frac{1}{\mathbb{P}(\Omega_0^\infty)}\mathbb{P}(\sup_{t\in I_n^A}|\zeta_t|>M_n)+\varepsilon,\label{eq:zeta_term_control}\\
        &\mathbb{P}(|\eta_{\tau_n'}^n|>M_n|\Omega_0^\infty)\leq \frac{1}{\mathbb{P}(\Omega_0^\infty)}\mathbb{P}(\sup_{t\in I_n^A}|\eta_t^n|>M_n)+\varepsilon,\label{eq:eta_term_control}
    \end{align}
    for all $n$ uniformly. Now, we may rewrite the contributions to $\zeta_t,\eta_t^n$ on a per-clone basis:
    \begin{align*}
        \zeta_t&=e^{-(\lambda_1-\lambda_0)t}e^{-\lambda_0 t}Z_0(t)+\sum_{T_i\leq t}e^{-\lambda_1 T_i}\left(e^{-\lambda_1(t-T_i)}Z_1^{(i)}(t-T_i)-Y_1^{(i)}\right)-\sum_{T_i>t}e^{-\lambda_1 T_i}Y_1^{(i)},\\
        \eta_t^n&=\sum_{T_i\leq \min\{t,t_n\}}e^{-\lambda_1 T_i}\left(e^{-\lambda_1 (t-T_i)}Z_1^{(i)}(t-T_i)-Y_1^{(i)}\right)-\sum_{t<T_i\leq t_n}e^{-\lambda_1 T_i}Y_1^{(i)}.
    \end{align*}

    By Markov inequality, to control the terms on the right-hand-side of \ref{eq:zeta_term_control} and \ref{eq:eta_term_control}:
    \begin{align}
        \mathbb{P}\left(\sup_{t\in I_n^A}|\zeta_t|>M_n\right)&\leq \frac{1}{M_n}e^{-(\lambda_1-\lambda_0)t_{n,-}^A}\mathbb{E}[\sup_{t\in I_n^A}e^{-\lambda_0 t}Z_0(t)]\label{eq:zeta_term_1}\\
        &+\frac{1}{M_n}\mathbb{E}\left[\sum_{T_i\leq t_{n,+}^A}e^{-\lambda_1 T_i}\mathbb{E}\left[\sup_{t\in [0,\infty)\cap (I_n^A-T_i)}\left|e^{-\lambda_1 t}Z_1^{(i)}(t)-Y_1^{(i)}\right||\mathcal{B}_\infty\right]\right]\label{eq:zeta_term_2}\\
        &+\frac{1}{M_n}\mathbb{E}\left[\sum_{T_i\geq t_{n,-}^A} e^{-\lambda_1 T_i}Y_1^{(i)}\right],\label{eq:zeta_term_3}\\
        \mathbb{P}\left(\sup_{t\in I_n^A}|\eta_t^n|>M_n\right)&\leq \frac{1}{M_n}   \mathbb{E}\left[\sum_{T_i\leq \min\{t_{n,+}^A,t_n\}}e^{-\lambda_1 T_i}\mathbb{E}\left[\sup_{t\in[0,\infty)\cap (I_n^A-T_i)}\left|e^{-\lambda_1 t}Z_1^{(i)}(t)-Y_1^{(i)}\right||\mathcal{B}_\infty\right]\right]\label{eq:eta_term_1}\\
        &+\frac{1}{M_n}\mathbb{E}\left[\sum_{t_n\geq T_i>t_{n,-}^A} e^{-\lambda_1 T_i}Y_1^{(i)}\right].\label{eq:eta_term_2}
    \end{align}

    It remains to control each term. We can use Doob's maximal inequality for the right-hand-side of \ref{eq:zeta_term_1}:
    \begin{align*}
        e^{-(\lambda_1-\lambda_0)t_{n,-}^A}\mathbb{E}[\sup_{t\in I_n^A}e^{-\lambda_0 t}Z_0(t)]\leq e^{-(\lambda_1-\lambda_0)t_{n,-}^A}\sqrt{4\mathbb{E}\left[\left(e^{-\lambda_0 t_{n,+}^A}Z_0(t_{n,+}^A)\right)^2\right]}=\mathcal{O}(n^{-\frac{\lambda_1-\lambda_0}{\lambda_1}}).
    \end{align*}

    We can use a simple application of Campbell's theorem for the right-hand-side of \ref{eq:zeta_term_3}:
    \begin{align*}
        \mathbb{E}\left[\sum_{T_i\geq t_{n,-}^A} e^{-\lambda_1 T_i}Y_1^{(i)}\right]=\mathbb{E}\left[\sum_{T_i\geq t_{n,-}^A} e^{-\lambda_1 T_i}\mathbb{E}[Y_1^{(i)}|\mathcal{B}_\infty]\right]&=(av+\mu)\int_{t_{n,-}^A}^\infty e^{-\lambda_1 s}e^{\lambda_0 s}\,ds\\
        &=\mathcal{O}\left(n^{-\frac{\lambda_1-\lambda_0}{\lambda_1}}\right).
    \end{align*}

    We can trivially bound term \ref{eq:eta_term_1} by term \ref{eq:zeta_term_2} and term \ref{eq:eta_term_2} by \ref{eq:zeta_term_3}. It then remains to control \ref{eq:zeta_term_2}. We first control the inner  $\mathbb{E}[\cdot|\mathcal{B}_\infty]$ term by using triangle inequality:
    \begin{align}
        &\mathbb{E}\left[\sup_{t\in[0,\infty)\cap (I_n^A-T_i)}\left|e^{-\lambda_1 t}Z_1^{(i)}(t)-Y_1^{(i)}\right||\mathcal{B}_\infty\right]\\
        &\leq \mathbb{E}\left[\sup_{t\in [0,\infty)\cap (I_n^A-T_i)}|e^{-\lambda_1 t}Z_1^{(i)}(t)-e^{-\lambda_1 (t_{n,-}^A-T_i)^+}Z_1^{(i)}((t_{n,-}^A-T_i)^+)||\mathcal{B}_\infty\right]\label{eq:term_clone_1}\\
    &+\mathbb{E}\left[\left|e^{-\lambda_1 (t_{n,-}^A-T_i)^+}Z_1^{(i)}((t_{n,-}^A-T_i)^+)-Y_1^{(i)}\right||\mathcal{B}_\infty\right].\label{eq:term_clone_2}
    \end{align}

    Term \ref{eq:term_clone_1} is controlled by Doob's submartingale maximal inequality, and term \ref{eq:term_clone_2} is controlled by using the $L^2$ rate of convergence of a scaled one-type branching process to its limit (see Lemma 7 of [main])
    \begin{align*}
        &\leq 2\sqrt{\mathbb{E}\left[|e^{-\lambda_1 (t_{n,+}^A-T_i)}Z_1^{(i)}(t_{n,+}^A-T_i)-e^{-\lambda_1 (t_{n,-}^A-T_i)^+}Z_1^{(i)}((t_{n,-}^A-T_i)^+)|^2|\mathcal{B}_\infty\right]}\\
        &+Ce^{-\frac{\lambda_1}{2} (t_{n,-}^A-T_i)^+}\\
        &\leq 4\sqrt{\mathbb{E}\left[|e^{-\lambda_1 (t_{n,+}^A-T_i)}Z_1^{(i)}(t_{n,+}^A-T_i)-Y_1^{(i)}|^2|\mathcal{B}_\infty\right]+\mathbb{E}\left[|Y_1^{(i)}-e^{-\lambda_1 (t_{n,-}^A-T_i)^+}Z_1^{(i)}((t_{n,-}^A-T_i)^+)|^2|\mathcal{B}_\infty\right]}\\
        &+Ce^{-\frac{\lambda_1}{2} (t_{n,-}^A-T_i)^+}\\
        &\leq C e^{-\frac{\lambda_1}{2}(t_{n,-}^A-T_i)^+}\leq Ce^{-\frac{\lambda_1}{2}(t_{n,+}^A-T_i)^+}.
    \end{align*}
    The last inequality is from noticing $t_{n,+}^A=t_{n,-}^A+2A$ and modifying the constant $C$ by a factor of $e^{\lambda_1 A}$. Let $\gamma'=(\lambda_1-\lambda_0)/\lambda_1$ if $\lambda_1/2<\lambda_0$ and $\gamma'=1/2$ otherwise. Then, using this bound with Campbell's Theorem:
    \begin{align*}
        &\mathbb{E}\left[\sum_{T_i\leq t_{n,+}^A}e^{-\lambda_1 T_i}\mathbb{E}\left[\sup_{t\in [0,\infty)\cap (I_n^A-T_i)}\left|e^{-\lambda_1 t}Z_1^{(i)}(t)-Y_1^{(i)}\right||\mathcal{B}_\infty\right]\right]\\
        &\leq C\mathbb{E}\left[\sum_{T_i\leq t_{n,+}^A}e^{-\lambda_1 T_i}e^{-\frac{\lambda_1}{2}(t_{n,+}^A-T_i)}\right]\\
        &=Ce^{-\frac{\lambda_1}{2}t_{n,+}^A}\int_0^{t_{n,+}^A}e^{-\frac{\lambda_1}{2}s}e^{\lambda_0 s}\,ds=\mathcal{O}\left(n^{-\gamma'}\right)
    \end{align*}
    
\end{proof}

Now, we can construct the approximation to $\tau_n$ by another stopping time.

\begin{lemma}\label{lem:stopping_time_approximation}
  For $\beta>1$ and $\gamma$ as in Lemma \ref{lem:remainder_control}, let $M_n\asymp n^{-\gamma}$ and set $t_n=\frac{\beta}{\lambda_1-\lambda_0}\ln(1/M_n)$. Then:
    \begin{align*}
        \mathbb{P}(|\tau_n-\tau_n'|>M_n|\Omega_0^\infty)\overset{n\to\infty}{\to}0.
    \end{align*}
\end{lemma}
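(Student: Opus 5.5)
Since $\tau_n\le\tau_n'$ always ($Z(t)\ge Z_1^{[0,t_n]}(t)$), it suffices to show $\mathbb{P}(\tau_n'-\tau_n>M_n|\Omega_0^\infty)\to0$. The plan is to express both stopping times through the normalized quantities $e^{-\lambda_1 t}Z(t)=W+\zeta_t$ and $e^{-\lambda_1 t}Z_1^{[0,t_n]}(t)=W^{[0,t_n]}+\eta_t^n$, and to use Lemma \ref{lem:remainder_control} to replace these by their limits at the relevant times. Jumps are of size one, so $Z(\tau_n)=n$. This gives
\begin{align*}
\tau_n=\frac{1}{\lambda_1}\ln n-\frac{1}{\lambda_1}\ln\left(W+\zeta_{\tau_n}\right).
\end{align*}
For $\tau_n'$, the clones born after $t_n$ can only add individuals, so $Z_1^{[0,t_n]}$ reaches $n$ at $\tau_n'$. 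Also, $\tau_n'\ge\tau_n\ge \frac{1}{\lambda_1}\ln n-A>t_n$ with high probability, because $t_n=\frac{\beta\gamma}{\lambda_1-\lambda_0}\ln n+O(1)$. Here I would need to check that $\beta\gamma/(\lambda_1-\lambda_0)<1/\lambda_1$, which follows from $\gamma<(\lambda_1-\lambda_0)/\lambda_1$ when $\beta$ is close to $1$; I expect that condition is what is intended. We then get
\begin{align*}
\tau_n'=\frac{1}{\lambda_1}\ln n-\frac{1}{\lambda_1}\ln\left(W^{[0,t_n]}+\eta^n_{\tau_n'}\right).
\end{align*}

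Subtracting, $\tau_n'-\tau_n=\frac{1}{\lambda_1}\ln\frac{W+\zeta_{\tau_n}}{W^{[0,t_n]}+\eta^n_{\tau_n'}}$. On $\Omega_0^\infty$ we have $W>0$ almost surely, and $W^{[0,t_n]}\uparrow W$. For any $\delta>0$ there is $c>0$ with $\mathbb{P}(W^{[0,t_1]}<c\,|\,\Omega_0^\infty)<\delta$ after enlarging the starting index; this uses Lemma \ref{lem:conv_Z_early}-type monotonicity. On the event $\{W^{[0,t_n]}\ge c\}$, a first-order expansion of the logarithm gives
\begin{align*}
|\tau_n'-\tau_n|\le C_c\left(|\zeta_{\tau_n}|+|\eta^n_{\tau_n'}|+(W-W^{[0,t_n]})\right)
\end{align*}
once the three terms are small. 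The issue is that Lemma \ref{lem:remainder_control} only yields $|\zeta_{\tau_n}|,|\eta^n_{\tau_n'}|\le M_n$ with high probability, whereas we need the sum to be $\le M_n/C_c$. I would fix this by rerunning Lemma \ref{lem:remainder_control} with $M_n$ replaced by $\varepsilon M_n$, or by a slightly larger exponent $\gamma''\in(\gamma,\min\{1/2,(\lambda_1-\lambda_0)/\lambda_1\})$. Its proof gives Markov bounds of order $n^{-\min\{\gamma',(\lambda_1-\lambda_0)/\lambda_1\}}/M_n$, which go to $0$ even after multiplying by a constant. So the bound $|\zeta_{\tau_n}|+|\eta^n_{\tau_n'}|=o_P(M_n)$ actually holds.

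The remaining term is $W-W^{[0,t_n]}=W^{(t_n,\infty)}$. By Campbell's theorem, as in the bound on term \ref{eq:zeta_term_3},
\begin{align*}
\mathbb{E}[W^{(t_n,\infty)}]=(av+\mu)\mathbb{E}[Y_1]\int_{t_n}^\infty e^{-(\lambda_1-\lambda_0)s}\,ds\asymp e^{-(\lambda_1-\lambda_0)t_n}=M_n^{\beta}.
\end{align*}
Markov's inequality then gives $\mathbb{P}(W^{(t_n,\infty)}>\varepsilon M_n)\lesssim M_n^{\beta-1}/\varepsilon\to0$, since $\beta>1$; this is exactly why $t_n$ is chosen as it is. Conditioning on $\Omega_0^\infty$ costs only the factor $1/q_0$. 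Combining the three bounds, and then letting $\delta\to0$, proves the lemma.

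I expect the main obstacle to be the constant-factor issue above, namely obtaining $o_P(M_n)$ rather than $O_P(M_n)$ for $\zeta_{\tau_n}$ and $\eta^n_{\tau_n'}$. A second obstacle is making sure the identity for $\tau_n'$ is used only on the event $\tau_n'>t_n$, since before $t_n$ the early-clone process is the full type-1 process and does not include the type-0 cells. Both are handled by the slack in the exponents, which is available because $\gamma$ lies strictly below $\min\{1/2,(\lambda_1-\lambda_0)/\lambda_1\}$.
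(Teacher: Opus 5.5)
Your proof is correct and follows the paper's route. Both arguments write $n e^{-\lambda_1\tau_n}=W+\zeta_{\tau_n}$ and $n e^{-\lambda_1\tau_n'}=W^{[0,t_n]}+\eta^n_{\tau_n'}$, keep $W^{[0,t_n]}$ away from zero, invoke Lemma \ref{lem:remainder_control}, and control $W-W^{[0,t_n]}$ by Campbell plus Markov using $\beta>1$. The paper splits through the intermediate times $\frac{1}{\lambda_1}\ln(n/W)$ and $\frac{1}{\lambda_1}\ln(n/W^{[0,t_n]})$, whereas you bound the log-ratio in one step. Your explicit handling of the constant factor fills a step the paper passes over, and it needs no rerun: $\varepsilon M_n\asymp n^{-\gamma}$ already satisfies the lemma's hypothesis. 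Your detour through $\tau_n'>t_n$, with its extra condition on $\beta\gamma$, is unnecessary. $\eta^n_t$ is defined for all $t$ and $Z_1^{[0,t_n]}$ has unit jumps at all times, so the identity for $\tau_n'$ holds for every $\beta>1$.
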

\begin{proof}
    Define the intermediate random times $\tau_n^{(1)}=\frac{1}{\lambda_1}\ln(n/W)$ and $\tau_n^{(2)}=\frac{1}{\lambda_1}\ln(n/W^{[0,t_n]})$. Then by a union bound:
    \begin{align}
        \mathbb{P}(|\tau_n-\tau_n'|>M_n|\Omega_0^\infty)&\leq \mathbb{P}(|\tau_n-\tau_n^{(1)}|>M_n/3|\Omega_0^\infty)\label{eq:tau_n_tau_n_1}\\
        &+\mathbb{P}(|\tau_n^{(1)}-\tau_n^{(2)}|>M_n/3|\Omega_0^\infty)\label{eq:tau_n_1_tau_n_2}\\
        &+\mathbb{P}(|\tau_n^{(2)}-\tau_n'|>M_n/3|\Omega_0^\infty).\label{eq:tau_n_2_tau_n_prime}
    \end{align}
Using the definitions of $\zeta_t,\eta_t^n$, we can write:
    \begin{align*}
        |\tau_n-\tau_n^{(1)}|=\frac{1}{\lambda_1}\left|\ln\left(1+\frac{\zeta_{\tau_n}}{W}\right)\right|,\,|\tau_n'-\tau_n^{(2)}|=\frac{1}{\lambda_1}\left|\ln\left(1+\frac{\eta_{\tau_n'}^n}{W^{[0,t_n]}}\right)\right|.
    \end{align*}
    Since $W$ and $W^{[0,t_n]}$ are both in the denominator in the previous display, we need to control the probability that they are close to zero. In particular, on $\Omega_0^\infty$, $W>0$ almost surely so that for any fixed $\varepsilon>0$ there is a $\delta>0$ with $\mathbb{P}(W\geq \delta|\Omega_0^\infty)\geq 1-\varepsilon$. Notice that $W^{[0,t_n]}$ is monotone in $n$ with $\lim_{n\to\infty}W^{[0,t_n]}=W>0$ on $\Omega_0^\infty$. Therefore, there is some $n_0$ such that $n\geq n_0$ implies $\mathbb{P}(W^{[0,t_n]}>0)\geq 1-\varepsilon/2$. Then, there is a $\delta_2>0$ such that for all $n\geq n_0$, $\mathbb{P}(W^{[0,t_n]}\geq \delta_2)\geq 1-\varepsilon$. We can control the right-hand-sides of terms \ref{eq:tau_n_tau_n_1} and \ref{eq:tau_n_2_tau_n_prime}.
    
    To control term \ref{eq:tau_n_tau_n_1} we use the bound $|\ln(1+x)|\leq c|x|$ for $x$ in a neighborhood of $0$. In particular,
    \begin{align*}
        \mathbb{P}(|\tau_n-\tau_n^{(1)}|>M_n/3|\Omega_0^\infty)&\leq \mathbb{P}\left(\frac{1}{\lambda_1}\left|\ln\left(1+\frac{\zeta_{\tau_n}}{W}\right)\right|>M_n/3,|\zeta_{\tau_n}|\leq M_n,W\geq \delta|\Omega_0^\infty\right)+\varepsilon\\
        &\leq \mathbb{P}\left(\left|\zeta_{\tau_n}\right|>\delta\lambda_1 M_n/(3c),|\zeta_{\tau_n}|\leq M_n,W\geq \delta|\Omega_0^\infty\right)+\varepsilon\overset{n\to\infty}{\to}\varepsilon.
    \end{align*}
We can use a similar argument to control term \ref{eq:tau_n_2_tau_n_prime}
    \begin{align*}
        \limsup_{n\to\infty}\mathbb{P}(|\tau_n'-\tau_n^{(2)}|>M_n/3|\Omega_0^\infty)\leq \varepsilon.
    \end{align*}
It thus remains to control \ref{eq:tau_n_1_tau_n_2}. We have:
    \begin{align*}
        |\tau_n^{(1)}-\tau_n^{(2)}|\leq -\frac{1}{\lambda_1}\ln (W^{[0,t_n]}/W)=-\frac{1}{\lambda_1}\ln\left(1+\frac{W^{[0,t_n]}-W}{W}\right).
    \end{align*}
    Thus, similarly to controlling the previous terms:
    \begin{align*}
        \mathbb{P}(|\tau_n^{(1)}-\tau_n^{(2)}|>M_n/3|\Omega_0^\infty)\leq \mathbb{P}(W-W^{[0,t_n]}>\delta \lambda_1 M_n/(3c)|\Omega_0^\infty)+\varepsilon.
    \end{align*}
    and using Markov's inequality followed by Campbell's theorem:
    \begin{align*}
        &\leq \frac{3c}{\delta \lambda_1 M_n}\mathbb{E}\left[\sum_{T_i>t_n}e^{-\lambda_1 T_i}Y_1^{(i)}\right]+\varepsilon\\
        &\leq \frac{C}{M_n}e^{-(\lambda_1-\lambda_0)t_n}+\varepsilon\overset{n\to\infty}{\to}\varepsilon.
    \end{align*}
\end{proof}

We then show that to get the limit of $X_{\tau_n,j}$ in distribution, it suffices to examine the limit of $X_{\tau_n',j}$. To do this, we need the following lemma that $X_{\tau_n,j}-X_{\tau_n',j}$ converge to $0$ in probability conditional on $\Omega_0^\infty$.
\begin{lemma}\label{lem:tau_n_tau_n_prime_X}
    Let $t_n,M_n$ be as in Lemma \ref{lem:stopping_time_approximation}. Then, for all $\varepsilon>0$:
    \begin{align*}
        \lim_{n\to\infty}\mathbb{P}(|X_{\tau_n,j}-X_{\tau_n',j}|>\varepsilon,\Omega_0^\infty)= 0
    \end{align*}
\end{lemma}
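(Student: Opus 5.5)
We have to show that $X_{\tau_n,j}-X_{\tau_n',j}\to 0$ in probability on $\Omega_0^\infty$. Write $\tau_n^-=\min\{\tau_n,\tau_n'\}$ and $\tau_n^+=\max\{\tau_n,\tau_n'\}$. Here $\tau_n'\ge \tau_n$, so in fact $\tau_n^-=\tau_n$ and $\tau_n^+=\tau_n'$. The plan is to localize on a good event and then split the increment into monotone and martingale pieces.

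\textbf{Localization.} Fix $\varepsilon>0$. By Lemma \ref{lem:tau_n_in_range} and Lemma \ref{lem:stopping_time_approximation}, for $A$ large and $n$ large we may work on the event
\[G_n=\{\tau_n\in I_n^A,\ 0\le \tau_n'-\tau_n\le M_n\},\]
at a cost of probability at most $\varepsilon$. On $G_n$ it therefore suffices to bound
\[\sup_{s\in I_n^A,\ 0\le h\le M_n}|X_{s+h,j}-X_{s,j}|.\]

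\textbf{Splitting the increment.} Write
\[X_{s+h,j}-X_{s,j}=(e^{-\lambda_0 h/2}-1)X_{s,j}+e^{-\lambda_0(s+h)/2}\Big(S_j(s+h)-S_j(s)-(av+\mu)Y\Delta_j(s,h)\Big),\]
where $\Delta_j(s,h)=\int_0^{s+h}P_j(s+h-u)e^{\lambda_0u}\,du-\int_0^{s}P_j(s-u)e^{\lambda_0u}\,du$.

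The first piece is handled by tightness. By Theorem \ref{thm:clt}, $X_{s,j}$ is tight uniformly over $s$, and a Doob-type maximal bound gives control of $\sup_{s\in I_n^A}|X_{s,j}|$ (see the main obstacle below). Since $|e^{-\lambda_0h/2}-1|\le CM_n\to0$, this piece vanishes in probability.

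For the deterministic piece, $\partial_s\int_0^s P_j(s-u)e^{\lambda_0 u}\,du=P_j(0)e^{\lambda_0 s}+\int_0^s P_j'(s-u)e^{\lambda_0u}\,du=O(e^{\lambda_0 s})$, because $|P_j'|$ is bounded by the jump rates. Hence $Y|\Delta_j|\le CYM_ne^{\lambda_0 s}$. After the prefactor this is $O(M_ne^{\lambda_0 s/2})=O(n^{\lambda_0/(2\lambda_1)-\gamma})$. This is \emph{not} small for the $\gamma$ allowed in Lemma \ref{lem:remainder_control}, so it must cancel against the compensator of $S_j$ rather than be bounded separately.

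\textbf{Compensator decomposition.} Decompose $S_j(s+h)-S_j(s)$ into three parts:
\begin{itemize}
\item new mutations in $(s,s+h]$ that are still of size $j$ at time $s+h$;
\item old clones entering size $j$;
\item old clones leaving size $j$.
\end{itemize}
The jumps of $S_j$ are of size one, with compensator rate
\[(av+\mu)Z_0(u)\mathbf 1_{j=1}+(j-1)(a+b)S_{j-1}(u)+(j+1)dS_{j+1}(u)-j(a+2b\cdots)S_j(u),\]
more precisely $j(a+b+d)S_j(u)$ for the outflow. So $S_j(t)=S_j(0)+A_j(t)+N_j(t)$, where $N_j$ is an $\mathcal I_t$-martingale with $\langle N_j\rangle_t\le C\int_0^t (Z_0+jS_{j-1}+jS_j+jS_{j+1})\,du=O(e^{\lambda_0 t})$ in expectation. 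The law of large numbers from \cite{ahmed2026} applies to each of $S_{j-1},S_j,S_{j+1},Z_0$.

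\textbf{Martingale part.} By Doob's $L^2$ maximal inequality applied to $N_j(\cdot)-N_j(t_{n,-}^A)$ on increments of length $M_n$, after first discretizing $I_n^A$ into a grid of mesh $M_n$, we get
\[\mathbb E\sup_{\text{window}}|N_j(s+h)-N_j(s)|^2\le CM_ne^{\lambda_0 t_{n,+}^A}.\]
The number of grid windows is $O(A/M_n)$, so a union bound is too lossy. Instead use the fact that $\tau_n$ is a stopping time: $N_j(\tau_n+h)-N_j(\tau_n)$ for $h\le M_n$ is a martingale in $h$ by optional sampling. This gives conditional second moment $\le CM_n e^{\lambda_0\tau_n}$, hence after scaling $O(M_n)\to0$.

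\textbf{Drift part.} The drift $A_j(s+h)-A_j(s)=\int_s^{s+h}a_j(u)\,du$ must be compared with $(av+\mu)Y\Delta_j$. The key identity is that $\int_0^tP_j(t-u)e^{\lambda_0u}\,du$ satisfies the Kolmogorov forward equation with the same rates when $S_k$ is replaced by its mean profile. Therefore
\[\text{drift}-(av+\mu)Y\Delta_j=\int_s^{s+h}\big(\text{linear combination of }S_k(u)-\text{mean}_k(u),\ Z_0(u)-Ye^{\lambda_0 u}\big)\,du.\]
Each of these fluctuations is $O_p(e^{\lambda_0u/2})$ by Theorem \ref{thm:clt} (including $X_{u,j\pm1}$) and standard $Z_0$ fluctuations. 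So this term is $O_p(M_ne^{\lambda_0 s/2})$ and after scaling $O_p(M_n)\to0$.

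\textbf{Main obstacle.} The main difficulty is obtaining these bounds uniformly over $u$ in a random window $[\tau_n,\tau_n+M_n]$, i.e.\ showing $\sup_{u\in I_n^A}e^{-\lambda_0u/2}|S_k(u)-\text{mean}_k(u)|$ is tight. I would prove this by again writing $S_k-\text{mean}_k=N_k+\int(\text{fluctuations})$, applying Doob's inequality to $N_k$ on the fixed window $I_n^A$ of length $2A$, and closing with Gronwall over the finite system $k\in\{j-1,j,j+1\}$. That system is not closed, since it couples to $k\pm2$. I would handle this either by using the $\ell^2$ tightness bound of Remark \ref{rem:sigma_large_time_unif} summed over all $k$, or by truncating through the monotone bound $jS_j\le Z_1$.
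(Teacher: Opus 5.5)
Your route is sound and genuinely different from the paper's. However, the step you single out as the main obstacle is not actually one, and the fixes you sketch for it would not work.

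\textbf{Comparison with the paper.} The paper splits $X_{t,j}=X^{(1)}_{t,j}+X^{(2)}_{t,j}+X^{(3)}_{t,j}$ into three pieces: the clone-sampling noise, $e^{-\lambda_0t/2}\int_0^tP_j(t-s)\,dR_s$, and the $Z_0(s)-Ye^{\lambda_0 s}$ convolution. It applies the Dynkin formula clone by clone only to the first piece. The resulting drift $\Gamma_{2,j}$ has integrand $\sum_{T_i\le u}(\mathbf 1_{Z_1^{(i)}(u-T_i)=k}-P_k(u-T_i))$, whose variance bound $Ce^{\lambda_0u}$ is immediate from conditional independence given $\mathcal B_\infty$. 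The price is that the $R$-piece needs its own treatment: integration by parts, bounds on $P_j',P_j''$, and a separate $j=1$ case.

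Your aggregate compensator for $S_j$ merges all three pieces. You get one martingale $N_j$, which absorbs $R$ when $j=1$. You also get one drift difference which, by the forward equation for $P_k$, is a linear combination of $Z_0(u)-Ye^{\lambda_0u}$ and of $S_k(u)-m_k(u)=e^{\lambda_0u/2}X_{u,k}$ for $k\in\{j-1,j,j+1\}$. Here $m_k(u)=(av+\mu)Y\int_0^uP_k(u-v)e^{\lambda_0v}\,dv$. This is shorter and avoids the integration by parts. The cost is that you need second-moment control of the full fluctuations $X_{u,k}$, not only of the sampling noise.

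\textbf{The flagged obstacle.} That control is needed only at fixed times, because the neighbours $k=j\pm1$ enter solely through a time integral. Lemma \ref{lem:control_fourth_moment} gives $\mathbb E[(S_k(u)-m_k(u))^2]\le Ce^{\lambda_0 u}$, and the same bound holds for $Z_0(u)-Ye^{\lambda_0u}$. Set $\tau_n''=\tau_n\wedge t_{n,+}^A$ and $T=t_{n,+}^A+M_n$. Cauchy--Schwarz in $u$ then gives
\[\mathbb E\int_{\tau_n''}^{\tau_n''+M_n}|S_k(u)-m_k(u)|\,du\le\sqrt{M_n}\Big(\int_0^{T}\mathbb E\big[(S_k(u)-m_k(u))^2\big]\,du\Big)^{1/2}\le C\sqrt{M_n}\,e^{\lambda_0T/2}.\]
After the prefactor $e^{-\lambda_0 t_{n,-}^A/2}$ this is $O(\sqrt{M_n})$, which is essentially what the paper does for $\Gamma_{2,j}$.

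The same bound handles your first piece $(e^{-\lambda_0h/2}-1)X_{\tau_n,j}$. By Doob's inequality and the estimate above,
\[\sup_{u\le T}|S_j(u)-m_j(u)|\le\sup_{u\le T}|N_j(u)|+\int_0^T|a_j(u)-m_j'(u)|\,du=O_p(e^{\lambda_0T/2}).\]

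So no uniform-in-$u$ tightness and no Gronwall step is required. The Gronwall closure you propose would in fact fail: the coupling coefficients grow like $k$, so the infinite system is not a bounded perturbation on $\ell^2$. Your alternative $jS_j\le Z_1$ is far too crude, since $Z_1\asymp e^{\lambda_1 t}\gg e^{\lambda_0 t/2}$.

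With this replacement, and with optional sampling at the bounded time $\tau_n''$ (which equals $\tau_n$ on your good event), your proof goes through.
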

\begin{proof}
    We begin by decomposing $X_{t,j}$ as follows:
    \begin{align}
        &X_{t,j}=X_{t,j}^{(1)}+X_{t,j}^{(2)}+X_{t,j}^{(3)},\label{eq:X_t_decomposition}\\
        &X_{t,j}^{(1)}=e^{-\lambda_0t/2}(S_j(t)-\sum_{T_i\leq t}P_j(t-T_i)),\nonumber\\
        &X_{t,j}^{(2)}=e^{-\lambda_0 t/2}\int_0^t P_j(t-s)\,dR_s,\nonumber\\
        &X_{t,j}^{(3)}=e^{-\lambda_0 t/2}(av+\mu)\int_0^t P_j(t-s)(Z_0(s)-e^{\lambda_0 s}Y)\,ds.\nonumber
    \end{align}
    It suffices to show that for any fixed $\varepsilon>0$, $\mathbb{P}(|X_{\tau_n,j}^{(i)}-X_{\tau_n',j}^{(i)}|>\varepsilon,\Omega_0^\infty)\to 0$.

    We begin with the $i=1$ term. We further subdivide the term $X_{t,j}^{(1)}$ into a martingale and integral component. To do this, consider $Z_1^{(i)}$ as a one-type continuous-time linear birth-death process, and let $\mathcal{A}$ denote its infinitesimal generator. Define the function $f_j(x)=\mathbf{1}_{x=j}$. Then:
    \begin{align*}
        (\mathcal{A}f_j)(x)&=(a+b)xf_j(x+1)+dxf_j(x-1)-(a+b+d)xf_j(x)\\
        &=(a+b)(j-1)\mathbf{1}_{x=j-1}+d(j+1)\mathbf{1}_{x=j+1}-(a+b+d)j\mathbf{1}_{x=j}
    \end{align*}

    By the Dynkin martingale formula (see Proposition 1.7 in Chapter 4, \cite{ethier_kurtz_ch4}), we observe that
    \begin{align}
        m_j^{(i)}(t):&=\mathbf{1}_{Z_1^{(i)}(t)=j}-\mathbf{1}_{1=j}-\int_0^t\mathcal{A}f_j(Z_1^{(i)}(s))\,ds\label{eq:m_ji_martingale}\\
        &=\mathbf{1}_{Z_1^{(i)}(t)=j}-\mathbf{1}_{1=j}-\int_0^t(a+b)(j-1)\mathbf{1}_{Z_1^{(i)}(s)=j-1}+d(j+1)\mathbf{1}_{Z_1^{(i)}(s)=j+1}-(a+b+d)j\mathbf{1}_{Z_1^{(i)}(s)=j}\,ds\nonumber
    \end{align}
    is a 0-mean martingale with respect to the filtration generated by $Z_1^{(i)}$. Therefore, for all $t\geq 0$, we have the following relation:
    \[0=\mathbb{E}[m_j^{(i)}(t)]=P_j(t)-\mathbf{1}_{1=j}-\int_0^t \mathbb{E}[\mathcal{A}f_j(Z_1^{(i)}(s))]\,ds.\]
    Combining this with \ref{eq:m_ji_martingale} yields the equality
    \begin{align*}
        \mathbf{1}_{Z_1^{(i)}(t)=j}-P_j(t)=m_j^{(i)}(t)+\int_0^t \mathcal{A}f(Z_1^{(i)}(s))-\mathbb{E}[\mathcal{A}f_j(Z_1^{(i)}(s))]\,ds.
    \end{align*}
    Substituting $t$ with $t-T_i$ and summing over $T_i\leq t$ gives the decomposition we will examine:
    \begin{align}
        &e^{\frac{\lambda_0}{2}t}X_{t,j}^{(1)}=\sum_{T_i\leq t}(\mathbf{1}_{Z_1^{(i)}(t-T_i)=j}-P_j(t-T_i))=\Gamma_{1,j}(t)+\Gamma_{2,j}(t)\label{eq:X1_decompose_gamma},\\
        &\Gamma_{1,j}(t)=\sum_{T_i\leq t}m_j^{(i)}(t-T_i),\\
        &\Gamma_{2,j}(t)=\sum_{T_i\leq t}\int_0^{t-T_i} \mathcal{A}f(Z_1^{(i)}(s))-\mathbb{E}[\mathcal{A}f(Z_1^{(i)}(s))]\,ds.
    \end{align}
    Define $N_i(t)=m_j^{(i)}\left((t-T_i)^+\right)$, and note that because $T_i$ is a stopping time and the clone evolution after $T_i$ is independent of the past the shifted process $N_i$ is still an $\mathcal{I}_t$-martingale. Furthermore we can write
    \[
    \Gamma_{1,j}(t)=\sum_{i=1}^\infty N_i(t),
    \]
    which has finite $L^1$-norm for each $t\geq 0$. Thus, since $\Gamma_{1,j}$ is a sum of martingales, it to is a $\mathcal{I}_t$-martingale.
    
    Finally, we may compare $X_{\tau_n,j}^{(1)}$ and $X_{\tau_n',j}^{(1)}$. By Lemma \ref{lem:stopping_time_approximation}, fix any $\delta>0$ and large $n$ such that $\mathbb{P}(|\tau_n'-\tau_n|\leq M_n,\Omega_0^\infty)\leq \delta$ so that:
    \begin{align*}
        \mathbb{P}(|X_{\tau_n,j}^{(1)}-X_{\tau_n',j}^{(1)}|>\varepsilon,\Omega_0^\infty)\leq \mathbb{P}(\sup_{s\in [0,M_n]}|X_{\tau_n+s,j}^{(1)}-X_{\tau_n,j}^{(1)}|>\varepsilon,|\tau_n'-\tau_n|\leq M_n,\Omega_0^\infty)+\delta,
    \end{align*}
    where we ignore the region $[-M_n,0)$ as $\tau_n\leq \tau_n'$ almost surely. Note that:
    \begin{align*}
        X_{t,j}^{(1)}-X_{s,j}^{(1)}&=\left(e^{-\frac{\lambda_0}{2}t}-e^{-\frac{\lambda_0}{2}s}\right)\left(S_j(t)-\sum_{T_i\leq t}P_j(t-T_i)\right)\\
        &+e^{-\frac{\lambda_0 }{2}s}\left(S_j(t)-\sum_{T_i\leq t}P_j(t-T_i)-\left(S_j(s)-\sum_{T_i\leq s}P_j(s-T_i)\right)\right).
    \end{align*}
    Then, using that $|1-e^{-s}|\leq 2s$ for $s$ in a neighborhood of $0$:
    \begin{align}
        &\mathbb{P}(\sup_{s\in [0,M_n]}|X_{\tau_n+s,j}^{(1)}-X_{\tau_n,j}^{(1)}|>\varepsilon,\Omega_0^\infty)\nonumber\\
        &\leq \mathbb{P}(e^{-\frac{\lambda_0}{2} \tau_n}M_n\sup_{s\in [0,M_n]}\left|S_j(\tau_n+s)-\sum_{T_i\leq \tau_n+s}P_j(\tau_n+s-T_i)\right|>\varepsilon/4,\Omega_0^\infty)\label{eq:1_control_part_1}\\
        &+\mathbb{P}(e^{-\frac{\lambda_0 }{2}\tau_n}\sup_{s\in [0,M_n]}\left|S_j(\tau_n+s)-\sum_{T_i\leq \tau_n+s}P_j(\tau_n+s-T_i)-\left(S_j(\tau_n)-\sum_{T_i\leq \tau_n}P_j(\tau_n-T_i)\right)\right|>\varepsilon/2,\Omega_0^\infty).\label{eq:1_control_part_2}
    \end{align}
    We further decompose the term in the suprema for both \ref{eq:1_control_part_1} and \ref{eq:1_control_part_2} by $\Gamma_{1,j},\Gamma_{2,j}$. Observe that by further decomposing based on the events $\tau_n\in I_n^A$ and $\tau_n\notin I_n^A$:
    \begin{align*}
        \ref{eq:1_control_part_1}-\delta&\leq \mathbb{P}(e^{-\frac{\lambda_0}{2} t_{n,-}^A}M_n\sup_{s\in I_n^A}\left|S_j(s)-\sum_{T_i\leq s}P_j(s-T_i)\right|>\varepsilon/4)\\
        &\leq\mathbb{P}(e^{-\frac{\lambda_0}{2} t_{n,-}^A}M_n\sup_{s\in I_n^A}\left|\Gamma_{1,j}(s)\right|>\varepsilon/8)\\
        &+\mathbb{P}(e^{-\frac{\lambda_0}{2} t_{n,-}^A}M_n\sup_{s\in I_n^A}\left|\Gamma_{2,j}(s)\right|>\varepsilon/8).
    \end{align*}
    Since $\Gamma_{1,j}$ is a martingale, we apply Doob's maximal inequality to get:
    \begin{align*}
        &\mathbb{P}(e^{-\frac{\lambda_0}{2} t_{n,-}^A}M_n\sup_{s\in I_n^A}\left|\Gamma_{1,j}(s)\right|>\varepsilon/8)\\
        &\leq \frac{8}{\varepsilon}e^{-\frac{\lambda_0}{2}t_{n,-}^A}M_n\sqrt{\mathbb{E}[|\Gamma_{1,j}(t_{n,+}^A)|^2]}.
    \end{align*}
    The quadratic variation of $\Gamma_{1,j}(t)$ is dominated by $\sum_{T_i\leq t}E_j^{(i)}$. $E_j^{(i)}$ counts the total number of entrances and exits of clone $i$ from size $j$ over all time. Remark that $E_j^{(i)}$ is dominated by a geometric random variable so that it has finite moment (say smaller than $C$). Then:
    \[\mathbb{E}[|\Gamma_{1,j}(t_{n,+}^A)|^2]=\mathbb{E}[[\Gamma_{1,j}]_{t_{n,+}^A}]\leq \mathbb{E}[\sum_{T_i\leq t_{n,+}^A}C]\leq C_2 n^{\frac{\lambda_0}{\lambda_1}}.\]
    Thus:
    \[\mathbb{P}(e^{-\frac{\lambda_0}{2} t_{n,-}^A}M_n\sup_{s\in I_n^A}\left|\Gamma_{1,j}(s)\right|>\varepsilon/8)\overset{n\to\infty}{\to} 0\]
    To control the supremum term involving $\Gamma_{2,j}$, we can rewrite $\Gamma_{2,j}$ by performing a substitution $s=u-T_i$ to observe:
    \begin{align*}
        \Gamma_{2,j}(t)&=\int_{0}^{t} \sum_{T_i\leq t}\mathbf{1}_{u\geq T_i}\left(\mathcal{A}f(Z_1^{(i)}(u-T_i))-\mathbb{E}[\mathcal{A}f(Z_1^{(i)}(s))]|_{s=u-T_i}\right)\,du\\
        &=\int_{0}^{t} \sum_{T_i\leq u}\left(\mathcal{A}f(Z_1^{(i)}(u-T_i))-\mathbb{E}[\mathcal{A}f(Z_1^{(i)}(s))]|_{s=u-T_i}\right)\,du\\
        &=\int_{0}^{t} (a+b)(j-1)\sum_{T_i\leq u}\left(\mathbf{1}_{Z_1^{(i)}(u-T_i)=j-1}-P_{j-1}(u-T_i)\right)\\
        &+d(j+1)\sum_{T_i\leq u}\left(\mathbf{1}_{Z_1^{(i)}(u-T_i)=j+1}-P_{j+1}(u-T_i)\right)\\
        &-(a+b+d)j\sum_{T_i\leq u}\left(\mathbf{1}_{Z_1^{(i)}(u-T_i)=j}-P_{j}(u-T_i)\right)\,du
    \end{align*}

    Now:
    \begin{align}
        &\mathbb{P}(e^{-\frac{\lambda_0}{2} t_{n,-}^A}M_n\sup_{s\in I_n^A}\left|\Gamma_{2,j}(s)\right|>\varepsilon/8)\nonumber\\
        &\leq\mathbb{P}\biggl(e^{-\frac{\lambda_0}{2} t_{n,-}^A}M_n\int_{0}^{t_{n,+}^A} (a+b)(j-1)\left|\sum_{T_i\leq u}\mathbf{1}_{Z_1^{(i)}(u-T_i)=j-1}-P_{j-1}(u-T_i)\right|\nonumber\\
        &+d(j+1)\left|\sum_{T_i\leq u}\mathbf{1}_{Z_1^{(i)}(u-T_i)=j+1}-P_{j+1}(u-T_i)\right|\nonumber\\
        &-(a+b+d)j\left|\sum_{T_i\leq u}\mathbf{1}_{Z_1^{(i)}(u-T_i)=j}-P_{j}(u-T_i)\right|\,du>\varepsilon/8\biggr)\label{eq:gamma_2_part_continue}.
    \end{align}
    Conditional on $\mathcal{B}_\infty$, $\mathbf{1}_{Z_1^{(i_1)}(u-T_{i_1})=j}$ and $\mathbf{1}_{Z_1^{(i_2)}(u-T_{i_2})=j}$ are uncorrelated. Thus:
    \begin{align*}
        \mathbb{E}\left[\left|\sum_{T_i\leq u}\mathbf{1}_{Z_1^{(i)}(u-T_i)=j}-P_{j}(u-T_i)\right|^2\right]=\mathbb{E}\left[\sum_{T_i\leq u}P_j(u-T_i)(1-P_j(u-T_i))\right]\leq Ce^{\lambda_0 u}.
    \end{align*}
    Then, by Markov's inequality:
    \begin{align*}
        \ref{eq:gamma_2_part_continue}\leq \frac{8}{\varepsilon}e^{-\frac{\lambda_0}{2}t_{n,-}^A}M_n\int_0^{t_{n,+}^A}C_2 e^{\frac{\lambda_0}{2}u}\,du=\mathcal{O}(M_n)\overset{n\to\infty}{\to}0.
    \end{align*}
    Finally, we conclude that the right-hand-side of \ref{eq:1_control_part_1} tends to $0$. 
    
    For term \ref{eq:1_control_part_2}, we first apply Lemma \ref{lem:tau_n_in_range}, the triangle inequality and a union bound to obtain:
    \begin{align}
        \ref{eq:1_control_part_2}-\delta&\leq \mathbb{P}(n^{-\frac{\lambda_0}{2\lambda_1}}\sup_{s\in [0,M_n]}|\Gamma_{1,j}(\tau_n+s)-\Gamma_{1,j}(\tau_n)|>\varepsilon/(4C),\tau_n\leq t_{n,+}^A,\Omega_0^\infty)\label{eq:1_control_2_1}\\
        &+\mathbb{P}(n^{-\frac{\lambda_0}{2\lambda_1}}\sup_{s\in [0,M_n]}|\Gamma_{2,j}(\tau_n+s)-\Gamma_{2,j}(\tau_n)|>\varepsilon/(4C),\tau_n\leq t_{n,+}^A,\Omega_0^\infty).\label{eq:1_control_2_2}
    \end{align}
    Note that on $\tau_n\leq t_{n,+}^A$, $\tau_n\wedge (t_{n,+}^A)=:\tau_n''=\tau_n$ almost surely. $\tau_n''$ is a bounded $\mathcal{I}_t$-stopping time. Then, by optional stopping, $\Gamma_{1,j}(\tau_n''+\cdot)-\Gamma_{1,j}(\tau_n'')$ is a $\mathcal{I}_{\tau_n''+\cdot}$-martingale. Hence we may use Doob's maximal inequality to control \ref{eq:1_control_2_1}:
    \begin{align}
        \ref{eq:1_control_2_1}&\leq \mathbb{P}(n^{-\frac{\lambda_0}{2\lambda_1}}\sup_{s\in [0,M_n]}|\Gamma_{1,j}(\tau_n''+s)-\Gamma_{1,j}(\tau_n'')|>\varepsilon/(4C))\nonumber\\
        &\leq \frac{4C}{\varepsilon}n^{-\frac{\lambda_0}{2\lambda_1}}\sqrt{\mathbb{E}\left[\left|\Gamma_{1,j}(\tau_n''+M_n)-\Gamma_{1,j}(\tau_n'')\right|^2\right]}.\label{eq:upper_1_control_2_1}
    \end{align}
    Let $E_j^{(i)}(u)$ denote the number of entrances and exists clone the $i$th clone takes into and out of size $j$ $u$ time units after inception. Therefore $E_j^{(i)}=E_j^{(i)}(\infty)$. The quadratic variation of $\Gamma_{1,j}(\tau_n''+M_n)-\Gamma_{1,j}(\tau_n'')$ is then dominated by:
    \begin{align*}
        [\Gamma_{1,j}(\tau_n''+\cdot)-\Gamma_{1,j}(\tau_n'')]_{M_n}\leq \sum_{T_i\leq \tau_n''}(E_j^{(i)}(\tau_n''+M_n-T_i)-E_j^{(i)}(\tau_n''-T_i))+\sum_{T_i\in [\tau_n'',\tau_n''+M_n]}E_j^{(i)}.
    \end{align*}
    We control the first term by the rarity of having type-1 entrances and exits occur during an $\mathcal{O}(M_n)$-window of time, and we control the second term by the rarity of having mutation events occur during an $\mathcal{O}(M_n)$-window of time. For the first term, we note that by examining the mean of $E_j^{(i)}(s)$ we obtain the uniform bound:
    \begin{align*}
        \sup_{k\geq 1}\mathbb{E}[E_j^{(1)}(s)|Z_1^{(1)}(0)=k]=\sup_{k\geq 1}\int_0^s d(j+1)P_{j+1}(s')+(a+b)(j-1)P_{j-1}(s')\,ds'\leq C_js
    \end{align*}
    
    Then, by using the Strong Markov property, we can control the first term:

    \begin{align*}
         \mathbb{E}\left[\sum_{T_i\leq \tau_n''}(E_j^{(i)}(\tau_n''+M_n-T_i)-E_j^{(i)}(\tau_n''-T_i))|\mathcal{I}_{\tau_n''}\right]&\leq C_j\sum_{T_i\leq \tau_n''}M_n\\
         &=CM_nM(\tau_n'').
    \end{align*}
    Taking expectation:
    \begin{align}
        \mathbb{E}\left[\sum_{T_i\leq \tau_n''}(E_j^{(i)}(\tau_n''+M_n-T_i)-E_j^{(i)}(\tau_n''-T_i))\right]\leq CM_n\mathbb{E}[M(t_{n,+}^A)]=CM_ne^{\lambda_0 t_{n,+}^A}.\label{eq:expectation_upper_1_control_2_0}
    \end{align}

    For the second term, once again using the strong Markov property:
    \begin{align}
        \mathbb{E}\left[\sum_{T_i\in [\tau_n'',\tau_n''+M_n]}E_j^{(i)}\right]&=\mathbb{E}\left[\mathbb{E}_{Z_0(\tau_n'')}\left[\sum_{T_i\leq M_n}E_j^{(i)}\right]\right]\nonumber\\
        &=(av+\mu)\mathbb{E}[E_j^{(1)}]\int_0^{M_n}\mathbb{E}\left[Z_0(\tau_n'')\right]e^{\lambda_0 s}\,ds\nonumber\\
        &\leq (av+\mu)\mathbb{E}[E_j^{(1)}]\int_0^{M_n}\mathbb{E}\left[\sup_{t\in [0,\infty)}(Z_0(t)e^{-\lambda_0 t})\right]e^{\lambda_0 (s+t_{n,+}^A)}\,ds\nonumber\\
        &\leq C M_n e^{\lambda_0 t_{n,+}^A}.\label{eq:expectation_upper_1_control_2_1}
    \end{align}
    Plugging \ref{eq:expectation_upper_1_control_2_1} and \ref{eq:expectation_upper_1_control_2_0} into \ref{eq:upper_1_control_2_1}, \ref{eq:1_control_2_1} tends to $0$. Regarding term \ref{eq:1_control_2_2}, we employ Markov's inequality with Cauchy-Schwarz to see:
    \begin{align*}
        \ref{eq:1_control_2_2}&\leq \mathbb{P}\left(n^{-\frac{\lambda_0}{2\lambda_1}}\int_{\tau_n''}^{\tau_n''+M_n}\left|\sum_{T_i\leq u}\left(\mathcal{A}f(Z_1^{(i)}(u-T_i))-\mathbb{E}[\mathcal{A}f(Z_1^{(i)}(s))]|_{s=u-T_i}\right)\right|\,du>\varepsilon/(4C)\right)\\
        &\leq \frac{4C}{\varepsilon}n^{-\frac{\lambda_0}{2\lambda_1}}\mathbb{E}\left[\int_{\tau_n''}^{\tau_n''+M_n}\left|\sum_{T_i\leq u}\left(\mathcal{A}f(Z_1^{(i)}(u-T_i))-\mathbb{E}[\mathcal{A}f(Z_1^{(i)}(s))]|_{s=u-T_i}\right)\right|\cdot 1\,du\right]\\
        &\leq \frac{4C}{\varepsilon}n^{-\frac{\lambda_0}{2\lambda_1}}M_n\sqrt{\int_{0}^{t_{n,+}^A+M_n}\mathbb{E}\left[\left|\sum_{T_i\leq u}\left(\mathcal{A}f(Z_1^{(i)}(u-T_i))-\mathbb{E}[\mathcal{A}f(Z_1^{(i)}(s))]|_{s=u-T_i}\right)\right|^2\right]\,du}
    \end{align*}
    and similarly to as before, the cross terms for the sum inside the expectation vanish. The expectation also grows slower than $C_2e^{\lambda_0 u}$ so that:
    \begin{align*}
        \leq C_3n^{-\frac{\lambda_0}{2\lambda_1}}M_nn^{\frac{\lambda_0}{2\lambda_1}}\overset{n\to\infty}{\to}0.
    \end{align*}
    In total then, the $|X_{\tau_n,j}^{(1)}-X_{\tau_n',j}^{(1)}|$ tends to $0$ in probability.

    Now, consider the (simpler) case $i=2$. Again, we may write:
    \begin{align*}
        X_{t,j}^{(2)}-X_{s,j}^{(2)}&=\left(e^{-\frac{\lambda_0}{2}t}-e^{-\frac{\lambda_0}{2}s}\right)\int_0^t P_j(t-u)\,dR_u\\
        &+e^{-\frac{\lambda_0}{2}s}\left(\int_0^t P_j(t-u)\,dR_u-\int_0^s P_j(s-u)\,dR_u\right).
    \end{align*}
    Thus:
    \begin{align}
        &\mathbb{P}(\sup_{s\in [0,M_n]}|X_{\tau_n+s,j}^{(2)}-X_{\tau_n,j}^{(2)}|>\varepsilon,\Omega_0^\infty)-\delta\nonumber\\
        &\leq \mathbb{P}\left(e^{-\frac{\lambda_0}{2}t_{n,-}^A}M_n\sup_{s\in [0,M_n]}\left|\int_0^{\tau_n''+s}P_j(\tau_n''+s-u)\,dR_u\right|>\varepsilon/4\right)\label{eq:term_2_part_1}\\
        &+\mathbb{P}(e^{-\frac{\lambda_0}{2}t_{n,-}^A}\sup_{s\in [0,M_n]}\left|\int_0^{\tau_n''}P_j(\tau_n''+s-u)-P_j(\tau_n''-u)\,dR_u\right|>\varepsilon/4)\label{eq:term_2_part_2}\\
        &+\mathbb{P}(e^{-\frac{\lambda_0}{2}t_{n,-}^A}\sup_{s\in [0,M_n]}\left|\int_{\tau_n''}^{\tau_n''+s}P_j(\tau_n''+s-u)\,dR_u\right|>\varepsilon/4).\label{eq:term_2_part_3}
    \end{align}
    Terms \ref{eq:term_2_part_1} and \ref{eq:term_2_part_3} are controlled in precisely the same manner. Thus, we just control \ref{eq:term_2_part_1}. First, using integration by parts (if $j\neq 1$)
    \begin{align*}
        \int_0^{\tau_n''+s}P_j(\tau_n''+s-u)\,dR_u=\int_0^{\tau_n''+s}P_j'(\tau_n''+s-u)R_u\,du.
    \end{align*}
    Note that $\sup_{t>0}P_j'(t)=C_j<\infty$ so that using Markov's inequality and Cauchy-Schwarz:
    \begin{align*}
        \ref{eq:term_2_part_1}\leq \frac{4C}{\varepsilon}n^{-\frac{\lambda_0}{2\lambda_1}}M_n\int_0^{\frac{1}{\lambda_1}\ln n+A+M_n}\sqrt{\mathbb{E}[|R_u|^2]}\,du\overset{n\to\infty}{\to}0
    \end{align*}
    since $\mathbb{E}[|R_u|^2]=\mathcal{O}(e^{\lambda_0 u})$. If $j=1$, we obtain an additional term of magnitude $R_{\tau_n''+s}$, so that to control term \ref{eq:term_2_part_1} we must show:
    \begin{align}
        \mathbb{P}\left(e^{-\frac{\lambda_0}{2}t_{n,-}^A}M_n\sup_{s\in [0,M_n]}\left|R_{\tau_n''+s}\right|>\varepsilon/8\right)\overset{n\to\infty}{\to}0.\label{eq:term_2_part_1_special_j}
    \end{align}
    By the Optional Stopping theorem, $R_{\tau_n''+\cdot}$ is a $\mathcal{I}_{\tau_n''+\cdot}$ martingale, whose quadratic variation at $s=M_n$ is $[R_{\tau_n''+\cdot}]_{M_n}=\#\{T_i\in [\tau_n'',\tau_n''+M_n]\}$. By using the series of inequalities as in \ref{eq:expectation_upper_1_control_2_1} (except with $E_j^{(i)}$ replaced by $1$), we obtain:
    \begin{align*}
        \mathbb{E}\left[[R_{\tau_n''+\cdot}]_{M_n}\right]\leq CM_n e^{\lambda_0 t_{n,+}^A}.
    \end{align*}
    Then by an application of Doob's maximal inequality, and Cauchy-Schwarz, \ref{eq:term_2_part_1_special_j} holds (precisely analogous to how \ref{eq:1_control_2_1} was controlled).
    
    For term \ref{eq:term_2_part_2} we use the fundamental theorem of calculus and Fubini to see:
    \begin{align*}
        \int_0^{\tau_n''}P_j(\tau_n''+s-u)-P_j(\tau_n''-u)\,dR_u=\int_0^s\int_0^{\tau_n''}P_j'(\tau_n''+z-u)\,dR_u\,dz.
    \end{align*}
    Therefore:
    \begin{align*}
        \ref{eq:term_2_part_2}\leq \mathbb{P}\left(e^{-\frac{\lambda_0}{2} t_{n,-}^A}\int_0^{M_n}\left|\int_0^{\tau_n''}P_j'(\tau_n''+z-u)\,dR_u\right|\,dz>\varepsilon/4\right)
    \end{align*}
    and by integration by parts (noting the uniform bound $|P_j''|\leq C$):
    \begin{align*}
        &\leq  \mathbb{P}\left(e^{-\frac{\lambda_0}{2} t_{n,-}^A}\int_0^{M_n}\left[|P_j'(z)|\sup_{t\in [0,t_{n,+}^A]}(|R_{t}|)+\int_0^{\tau_n''}C|R_u|\,du\right]dz>\varepsilon/4\right)\\
        &\leq Cn^{-\frac{\lambda_0}{2\lambda_1}}\int_0^{M_n}\left[\sqrt{\mathbb{E}\left[\sup_{t\in [0,t_{n,+}^A]}|R_t|^2\right]}+\int_0^{t_{n,+}^A}\sqrt{\mathbb{E}\left[|R_u|^2\right]}\,du\right]dz\\
        &\leq Cn^{-\frac{\lambda_0}{2\lambda_1}}\int_0^{M_n}\left[2\sqrt{\mathbb{E}[|R_{t_{n,+}^A}|^2]}+\int_0^{t_{n,+}^A}\sqrt{\mathbb{E}[|R_u|^2]}\,du\right]dz\to 0.
    \end{align*}
    Hence, $|X_{\tau_n,j}^{(2)}-X_{\tau_n',j}^{(2)}|$ converges to $0$ in probability. 
    
    Finally, we examine the $i=3$ term:
\begin{align}
        &\mathbb{P}(\sup_{s\in [0,M_n]}|X_{\tau_n+s,j}^{(3)}-X_{\tau_n,j}^{(3)}|>\varepsilon,\Omega_0^\infty)-\delta\nonumber\\
        &\leq \mathbb{P}\left(e^{-\frac{\lambda_0}{2}t_{n,-}^A}M_n\sup_{s\in [0,M_n]}\left|\int_0^{\tau_n''+s}P_{j}(t-s)(Z_0(u)-e^{\lambda_0 u}Y)\,du\right|>\varepsilon/4\right)\label{eq:term_3_part_1}\\
        &+\mathbb{P}(e^{-\frac{\lambda_0}{2}t_{n,-}^A}\sup_{s\in [0,M_n]}\left|\int_{\tau_n''}^{\tau_n''+s}P_j(t-s)(Z_0(u)-e^{\lambda_0 u}Y)\,du\right|>\varepsilon/4).\label{eq:term_3_part_2}
    \end{align}
Since $\mathbb{E}\left[|Z_0(u)-e^{\lambda_0 u}Y|^2\right]=O(e^{\lambda_0 u})$ we can control term \ref{eq:term_3_part_1}:
    \begin{align*}
        \ref{eq:term_3_part_1}\leq C n^{-\frac{\lambda_0}{2\lambda_1}}M_n\int_0^{t_{n,+}^A+M_n}\sqrt{\mathbb{E}\left[|Z_0(u)-e^{\lambda_0 u}Y|^2\right]}\,du=\mathcal{O}(M_n).
    \end{align*}
By the strong Markov property, we control term \ref{eq:term_3_part_2}:
    \begin{align*}
        \ref{eq:term_3_part_2}&\leq Cn^{-\frac{\lambda_0}{2\lambda_1}}\int_0^{M_n}\sqrt{\mathbb{E}\left[|Z_0(u+\tau_n'')-e^{\lambda_0 (u+\tau_n'')}Y|^2\right]}\,du\\
        &\leq Cn^{-\frac{\lambda_0}{2\lambda_1}}\int_0^{M_n}\sqrt{\mathbb{E}\left[\mathbb{E}\left[|Z_0(u)-e^{\lambda_0 u}Y|^2|Z_0(\tau_n'')\right]\right]}\,du\\
        &\leq Cn^{-\frac{\lambda_0}{2\lambda_1}}\int_0^{M_n}\sqrt{\mathbb{E}\left[Z_0(\tau_n'')e^{\lambda_0 u}\right]}\,du\\
        &\leq Cn^{-\frac{\lambda_0}{2\lambda_1}}\int_0^{M_n}\sqrt{\mathbb{E}\left[\sup_{t\in [0,\infty)}(Z_0(t)e^{-\lambda_0 t})\right]}e^{\frac{\lambda_0}{2}(t_{n,+}^A+u)}\,du\overset{n\to\infty}=\mathcal{O}(M_n).
    \end{align*}
\end{proof}

We now can obtain convergence of $X_{\tau_n',j}$ by following the steps laid out in Proposition \ref{prop:weak_topology_convergence}.

\begin{proposition}\label{prop:gamma_1_tau_n_prime}
    Let $M_n=n^{-\gamma}$ for $\gamma\in (0,(\lambda_1-\lambda_0)/(4\lambda_1))$. If we take $t_n=\frac{2}{\lambda_1-\lambda_0}\ln (M_n^{-1})$, then:
    \begin{enumerate}
        \item $\tau_n'-t_n\to\infty$ almost surely on $\Omega_0^\infty$
        \item $e^{-\lambda_0 \tau_n'/2}\max\{S_j^{[0,t_n]}(\tau_n'),Y\int_0^{t_n}P_j(\tau_n'-s)e^{\lambda_0 s}\,ds\}$ converges to $0$ in probability, on $\Omega_0^\infty$.
    \end{enumerate}
    Using such $\gamma$ and $t_n$:
    \begin{align*}
        \{X_{\tau_n',\geq 1}\}\cup(X_{\tau_n',j})_{j=1}^\infty|\Omega_0^\infty\Rightarrow (X_{\infty,j})_{j=1}^\infty
    \end{align*}
    $\mathcal{I}_\infty$-stably.
\end{proposition}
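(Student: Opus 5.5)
We handle the two auxiliary points first and then reuse the argument of Proposition \ref{prop:weak_topology_convergence} with $T=t_n$ and the $\mathcal{I}^\ast_{t_n}$-measurable evaluation time $\tau_n'$.

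\textbf{Point 1.} On $\Omega_0^\infty$, Lemma \ref{lem:conv_Z_early} gives $e^{-\lambda_1 t}Z_1^{[0,t_n]}(t)\to W>0$ along the relevant times. Hence $\tau_n'=\frac{1}{\lambda_1}\ln n+\mathcal{O}(1)$ almost surely, which is consistent with Lemma \ref{lem:tau_n_in_range}. Since $t_n=\frac{2\gamma}{\lambda_1-\lambda_0}\ln n$ and $\gamma<(\lambda_1-\lambda_0)/(4\lambda_1)$, we get $t_n\le \frac{1}{2\lambda_1}\ln n$. Therefore $\tau_n'-t_n\ge \frac{1}{2\lambda_1}\ln n-\mathcal{O}(1)\to\infty$.

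\textbf{Point 2.} Using $P_j\le 1$ and $Y<\infty$, the second quantity is at most $Ye^{-\lambda_0\tau_n'/2}e^{\lambda_0 t_n}/\lambda_0$. This is of order $n^{-\lambda_0/(2\lambda_1)+\lambda_0/(2\lambda_1)}\cdot o(1)$, because $\lambda_0 t_n\le \frac{\lambda_0}{2\lambda_1}\ln n$ is strictly less than $\lambda_0\tau_n'/2\approx \frac{\lambda_0}{2\lambda_1}\ln n$ with room to spare. More precisely, $\frac{2\gamma\lambda_0}{\lambda_1-\lambda_0}<\frac{\lambda_0}{2\lambda_1}$, so it tends to $0$. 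For $S_j^{[0,t_n]}(\tau_n')\le M(t_n)$, note that $e^{-\lambda_0 t_n}M(t_n)$ converges almost surely (via $R_t$ and $Z_0$). The same exponent comparison then gives convergence to $0$ in probability, after intersecting with $\{\tau_n'\in I_n^A\}$ from Lemma \ref{lem:tau_n_in_range}. The same bounds handle the $\geq 1$ coordinate, since $S_{\geq1}^{[0,t_n]}\le M(t_n)$.

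\textbf{Main convergence.} Write the analogue of \eqref{eq:Xt_Rep} at $t=\tau_n'$ and $T=t_n$:
\[
X_{\tau_n',j}=e^{-\lambda_0 t_n/2}\sum_{i=1}^{Z_0(t_n)}X^{t_n,(i)}_{\tau_n'-t_n,j}+(\text{the two terms of Point 2}).
\]
The remainder vanishes in probability by Point 2. The type-0 lineages after $t_n$ and their mutant clones are independent of $\mathcal{I}^\ast_{t_n}$, because $\tau_n'$ depends only on clones born before $t_n$. Hence, conditionally on $\mathcal{I}^\ast_{t_n}$, the summands are i.i.d. copies of $X_s$ with $s=\tau_n'-t_n$ frozen. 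We then repeat the characteristic function computation \eqref{eq:conditional_T_rewrite}. The Taylor error is controlled uniformly in $s$ by Lemma \ref{lem:control_fourth_moment} (and its $\geq 1$ analogue). We replace $\Sigma_{s,T}$ by $\Sigma$ using $\Sigma_s\to\Sigma$ together with $s=\tau_n'-t_n\to\infty$ from Point 1, and apply dominated convergence. We also use $e^{-\lambda_0 t_n}Z_0(t_n)\to Y$. This gives
\[
\mathbb{E}\bigl[e^{i\langle\theta,\mathbf X_{\tau_n'}\rangle+irQ}\mathbf 1_{Y>0}\bigr]\to \mathbb{E}\bigl[e^{-\frac12 Y\theta^\intercal\Sigma\theta+irQ}\mathbf 1_{Y>0}\bigr]
\]
for every $\mathcal{I}_s$-measurable $Q$, since $t_n\ge s$ eventually. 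The $\pi$-$\lambda$ remark then yields finite-dimensional $\mathcal{I}_\infty$-stable convergence.

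\textbf{Tightness and the main obstacle.} Tightness in $\ell^p$ follows as in the proof of Theorem \ref{thm:clt}: the bound $\sup_t \mathbb{E}[X_{t,j}^2]=\mathcal{O}(j^{-1-\lambda_0/\lambda_1})$ of Remark \ref{rem:sigma_large_time_unif} applies conditionally at the frozen time $s$, so it survives the random evaluation time. The main obstacle is justifying this conditional independence and freezing rigorously. Specifically, one must check that $\tau_n'$ is $\mathcal{I}^\ast_{t_n}$-measurable while the post-$t_n$ lineages are independent of $\mathcal{I}^\ast_{t_n}$. One must also check that $Y^{t_n,(i)}$ decomposes $Y$ correctly even though $Y$ itself is not $\mathcal{I}^\ast_{t_n}$-measurable. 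I would handle the latter exactly as in Proposition \ref{prop:weak_topology_convergence}, by expressing $Y=e^{-\lambda_0 t_n}\sum_i Y^{t_n,(i)}$.
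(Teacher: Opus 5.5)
Your proposal is correct and follows essentially the same route as the paper. Point 1 comes from $\tau_n'\ge\tau_n\ge\frac{1}{\lambda_1}\ln n-\mathcal{O}(1)$ against $t_n<\frac{1}{2\lambda_1}\ln n$; only this lower bound on $\tau_n'$ is needed, and it follows from $Z_1^{[0,t_n]}\le Z$ rather than from Lemma \ref{lem:conv_Z_early}. Point 2 comes from $S_j^{[0,t_n]}(\tau_n')\le M(t_n)$ and the exponent comparison $\frac{2\gamma\lambda_0}{\lambda_1-\lambda_0}<\frac{\lambda_0}{2\lambda_1}$. The main convergence is then the characteristic-function argument of Proposition \ref{prop:weak_topology_convergence} with $T=t_n$, conditioning on $\mathcal{I}^\ast_{t_n}$, with respect to which $\tau_n'$ is measurable.

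Your extra $\ell^p$-tightness paragraph is not needed here: the paper uses this proposition only in the product topology and proves $\ell^p$ tightness later directly for $X_{\tau_n}$ via Lemma \ref{lemma:QuadVarBound}. Your conditional second-moment sketch does work for $X_{\tau_n'}$, provided you also note that the two remainder terms vanish in $\ell^1$ norm by the same Point 2 bounds, using $\sum_j S_j^{[0,t_n]}\le M(t_n)$ and $\sum_j P_j\le 1$.
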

\begin{proof}

    We first prove that $\gamma$ satisfying those above two conditions can be chosen. Note that $\tau_n'\geq \tau_n\sim \frac{1}{\lambda_1}\ln(n)$ almost surely on $\Omega_0^\infty$. Thus, condition (1) is immediate.

    For condition (2), fix any $\varepsilon>0$ and it suffices to show:
    \begin{align}
        &\mathbb{P}(e^{-\lambda_0\tau_n'/2}S_j^{[0,t_n]}(\tau_n')>\varepsilon,\Omega_0^\infty)\to 0\label{eq:tau_n_prime_term_1}\\
        &\mathbb{P}(e^{-\lambda_0\tau_n'/2}Y\int_0^{t_n}P_j(\tau_n'-s)e^{\lambda_0 s}\,ds>\varepsilon,\Omega_0^\infty)\to 0\label{eq:tau_n_prime_term_2}.
    \end{align}

    For term \ref{eq:tau_n_prime_term_1}, we can further be on the set $\{\tau_n'>t_n\}$ (since this is achieved with arbitrarily high probability on $\Omega_0^\infty$ as $n\to\infty$ by condition (1)). Note that $S_j^{[0,t_n]}(\tau_n')\leq M_{t_n}$. Therefore:
    \begin{align*}
        \ref{eq:tau_n_prime_term_1}-\delta\leq \mathbb{P}(e^{-\lambda_0 t_{n,-}^A/2}M_{t_n}>\varepsilon)\leq C n^{-\frac{\lambda_0}{2\lambda_1}}\mathbb{E}[M_{t_n}]=Cn^{-\frac{\lambda_0}{2\lambda_1}+\frac{2\lambda_0\gamma}{\lambda_1-\lambda_0}}\overset{n\to\infty}{\to}0,
    \end{align*}
    for the specified $\gamma$ satisfying (1). Similarly, for such small $\gamma$, \ref{eq:tau_n_prime_term_2} tends to $0$, as desired.

    The proof here follows that of Proposition \ref{prop:weak_topology_convergence} but with small adaptations. In that proof, we fixed $T$, took $t\to\infty$ and then took $T\to\infty$. Here, we consider the sequence $t_n$ instead of $T$ and take $n\to\infty$ so that $\tau_n'\to\infty$ at the same time as $t_n\to\infty$. $\gamma$ is chosen small enough so that $t_n$ grows to $\infty$ at a slow rate. Using the same notation as in the proof of that proposition, we may decompose:
    \begin{align*}
    X_{\tau_n',j}=e^{-\lambda_0 t_n/2}\sum_{i=1}^{Z_0(t_n)}X^{t_n,(i)}_{\tau_n'-t_n,j}+e^{-\lambda_0 \tau_n'/2}S_j^{[0,t_n]}(t)+e^{-\lambda_0 \tau_n'/2}Y\int_0^{t_n} P_j(\tau_n'-s)e^{\lambda_0 s}\,ds,
\end{align*}
By the conditions on $\gamma$, the last two remainder terms tend to $0$ in probability on $\Omega_0^\infty$. At this stage in Proposition \ref{prop:weak_topology_convergence}, we used the $\sigma$-algebra $\mathcal{I}_{t_n}$. Here, we will instead consider the $\sigma$-algebra $\mathcal{I}^\ast_{t_n}$, which is the smallest one containing (1) $\mathcal{I}_{t_n}$ and (2) the history from inception to $\infty$ of all type-1 clones produced during the interval $[0,t_n]$. Note that $\tau_n'$ is $\mathcal{I}^\ast_{t_n}$-measurable. Therefore:
\begin{align*}
    \mathbb{E}[e^{ie^{-\lambda_0 t_n/2}\sum_{i=1}^{Z_0(t_n)}\langle \boldsymbol{\theta},\boldsymbol{X}_{\tau_n'-t_n}^{t_n,(i)}\rangle}|\mathcal{I}^\ast_{t_n}]&=\mathbb{E}[e^{i e^{-\lambda_0 t_n/2} \langle \boldsymbol{\theta}, \boldsymbol{X}_{s-t_n}^{t_n,(1)}\rangle}]|_{s=\tau_n'}^{Z_0(t_n)}\nonumber\\
    &=\left(1-\frac{1}{2}e^{-\lambda_0 T}\boldsymbol{\theta}^\intercal\Sigma_{\tau_n',t_n}\boldsymbol{\theta}+o_{n\to\infty}(e^{-\lambda_0 t_n})\right)^{Z_0(t_n)},
\end{align*}
which on the event $\Omega_0^\infty$, and using the two conditions for the chosen $\gamma$, tends to $e^{-\frac{1}{2}Y\boldsymbol{\theta}^\intercal \Sigma\boldsymbol{\theta}}$ as $n\to\infty$. The rest of the proof follows that of Proposition \ref{prop:weak_topology_convergence}, with obvious adaptations. Once again, we also get stable convergence by the remark after Proposition \ref{prop:weak_topology_convergence} and by using that $\mathcal{I}_t\subseteq \mathcal{I}^\ast_t\subseteq \mathcal{I}_\infty$ (so $\mathcal{I}^\ast_\infty=\mathcal{I}_\infty$).
\end{proof}

We now begin to prove the main theorem.

\CLTDetectionSize*

We first show convergence under the product topology.
\begin{proposition}
    Theorem \ref{thm:clt_detection_size} holds, where weak convergence is taken under the product topology. 
\end{proposition}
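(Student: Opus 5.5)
The plan is to reduce to $\tau_n'$, then convert the scaling. We write
\[\overline{X}_{n,j}=n^{-\frac{\lambda_0}{2\lambda_1}}e^{\lambda_0\tau_n/2}X_{\tau_n,j},\]
and similarly for the $\geq 1$ coordinate. Since finite-dimensional convergence is what is needed, we fix finitely many indices $j_1<\dots<j_m$, possibly together with the $\geq 1$ coordinate, and work with vectors in $\mathbb{R}^{m}$.

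\emph{Step 1 (replace $\tau_n$ by $\tau_n'$).} Choose $\gamma$ and $t_n$ compatible with both Lemma \ref{lem:stopping_time_approximation} and Proposition \ref{prop:gamma_1_tau_n_prime}. Take $\gamma<(\lambda_1-\lambda_0)/(4\lambda_1)$, which is also $<\min\{1/2,(\lambda_1-\lambda_0)/\lambda_1\}$, and $t_n=\frac{2}{\lambda_1-\lambda_0}\ln(1/M_n)$, i.e.\ $\beta=2$. By Lemma \ref{lem:tau_n_tau_n_prime_X}, $X_{\tau_n,j}-X_{\tau_n',j}\to 0$ in probability on $\Omega_0^\infty$ for each fixed $j$. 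For the $\geq 1$ coordinate I would rerun the same three-piece decomposition \eqref{eq:X_t_decomposition} with $\mathbf{1}_{x=j}$ replaced by $\mathbf{1}_{x\geq 1}$. Here the generator term reduces to $-d\mathbf{1}_{x=1}$, so the $\Gamma_{2}$ part is handled exactly as before. This gives $X_{\tau_n,\geq 1}-X_{\tau_n',\geq1}\to0$ in probability. Combining with Proposition \ref{prop:gamma_1_tau_n_prime} and the standard fact that adding an $o_P(1)$ perturbation preserves $\mathcal{I}_\infty$-stable convergence (test against $Q'\mathbf 1_{\Omega_0^\infty}$ with $f$ bounded Lipschitz), we obtain that $(X_{\tau_n,j_k})_k$ converges $\mathcal{I}_\infty$-stably to $(X_{\infty,j_k})_k$ on $\Omega_0^\infty$.

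\emph{Step 2 (random prefactor).} Show that $n^{-\lambda_0/(2\lambda_1)}e^{\lambda_0\tau_n/2}\to W^{-\lambda_0/(2\lambda_1)}$ in probability on $\Omega_0^\infty$. Since $n\le Z(\tau_n)\le n+1$, we have $e^{-\lambda_1\tau_n}n\to W$ almost surely, because $\tau_n\to\infty$ and $e^{-\lambda_1 t}Z(t)\to W>0$ on $\Omega_0^\infty$. Alternatively one can use $\tau_n^{(1)}$ from the proof of Lemma \ref{lem:stopping_time_approximation}. Raising to the power $\lambda_0/(2\lambda_1)$ gives the claim. Crucially, $W$ is $\mathcal{I}_\infty$-measurable: by Lemma \ref{lem:W_form} it is a function of the mutation times and the clone limits $Y_1^{(i)}$.

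\emph{Step 3 (combine).} Stable convergence has a key property (Jacod--Shiryaev, as in Proposition 5.33 of \cite{jacod_limit_2003}): if $V_n\to V$ $\mathcal{G}$-stably and $U_n\to U$ in probability with $U$ $\mathcal{G}$-measurable, then $(V_n,U_n)\to(V,U)$ $\mathcal{G}$-stably. Applying this with $\mathcal{G}=\mathcal{I}_\infty$, the continuous mapping $(v,u)\mapsto uv$ yields $\overline{X}_{n,\cdot}\Rightarrow W^{-\lambda_0/(2\lambda_1)}X_{\infty,\cdot}$ $\mathcal{I}_\infty$-stably in finite dimensions. This covers both points, with Point 2 including the $\geq1$ coordinate. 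The limit's conditional law given $\mathcal{I}_\infty$ is then Gaussian with covariance $W^{-\lambda_0/\lambda_1}Y\sigma_{j,k}$.

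The main obstacle is Step 1 for the $\geq 1$ coordinate and checking that the choices of $\gamma,t_n$ are simultaneously admissible. In particular, Lemma \ref{lem:stopping_time_approximation} with $\beta=2$ must still give $e^{-(\lambda_1-\lambda_0)t_n}/M_n=M_n\to0$, which holds. The bounds in Lemma \ref{lem:tau_n_tau_n_prime_X} used $j$-dependent constants (e.g.\ $E_j^{(i)}$ geometric, $\sup|P_j'|$), and I would verify their analogues for $x\geq1$. For instance, the number of exits from $\{x\ge1\}$ is at most $1$, and $P_{\geq1}'=-dP_1$ is bounded, so these steps should go through more easily.
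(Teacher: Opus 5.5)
Your proposal is correct and matches the paper's argument. You pass from $\tau_n$ to $\tau_n'$ using Lemma \ref{lem:tau_n_tau_n_prime_X} and Proposition \ref{prop:gamma_1_tau_n_prime}, then convert the prefactor via $e^{-\lambda_1\tau_n}n\to W$ (equivalently $\tau_n-\frac{1}{\lambda_1}\ln(n/W)\to 0$) on $\Omega_0^\infty$, using that $W$ is $\mathcal{I}_\infty$-measurable. Your explicit extension of Lemma \ref{lem:tau_n_tau_n_prime_X} to the $\geq 1$ coordinate fills in a step the paper leaves implicit for Point 2. One small addition there: since $P_{\geq 1}(0)=1$, integrating by parts in the $X^{(2)}$ piece produces an extra $R_t$ boundary term, which is handled exactly as in the paper's $j=1$ case.
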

\begin{proof}
    Let $\gamma>0$ be as in Proposition \ref{prop:gamma_1_tau_n_prime}. Then:
    \[(X_{\tau_n,j})_{j=1}^\infty|\Omega_0^\infty=(X_{\tau_n',j})_{j=1}^\infty+(X_{\tau_n,j}-X_{\tau_n',j})_{j=1}^\infty|\Omega_0^\infty\overset{n\to\infty}{\Rightarrow}(X_{\infty,j})_{j=1}^\infty|\Omega_0^\infty\]
    $\mathcal{I}_\infty$-stably, using Lemma \ref{lem:tau_n_tau_n_prime_X} and Proposition \ref{prop:gamma_1_tau_n_prime}. Writing $\tau_n=\frac{1}{\lambda_1}\ln(n/W)+\left(\tau_n-\frac{1}{\lambda_1}\ln(n/W)\right)$, we then obtain:
    \begin{align*}
        &(n/W)^{-\frac{\lambda_0}{2\lambda_1}}\left(S_j(\tau_n)-(av+\mu)Y\int_0^{\tau_n}P_j(\tau_n-s)e^{\lambda_0 s}\,ds\right)_{j=1}^\infty|\Omega_0^\infty\\
        &=e^{\frac{\lambda_0}{2}(\tau_n-\frac{1}{\lambda_1}\ln(n/W))}(X_{\tau_n,j})_{j=1}^\infty|\Omega_0^\infty\overset{n\to\infty}{\Rightarrow}(X_{\infty,j})_{j=1}^\infty|\Omega_0^\infty
    \end{align*}
    $\mathcal{I}_\infty$-stably, using that $|\tau_n-\frac{1}{\lambda_1}\ln(n/W)|\to 0$ almost surely on $\Omega_0^\infty$ as $n\to\infty$. Finally, note that $W$ is $\mathcal{I}_\infty$-measurable so that:
    \begin{align*}
        &W^{-\frac{\lambda_0}{2\lambda_1}}(n/W)^{-\frac{\lambda_0}{2\lambda_1}}\left(S_j(\tau_n)-(av+\mu)Y\int_0^{\tau_n}P_j(\tau_n-s)e^{\lambda_0 s}\,ds\right)_{j=1}^\infty|\Omega_0^\infty\overset{n\to\infty}{\Rightarrow}W^{-\frac{\lambda_0}{2\lambda_1}}(X_{\infty,j})_{j=1}^\infty|\Omega_0^\infty,
    \end{align*}
    as desired.
\end{proof}

We may also extend this convergence to be in $\ell^p$ for all $p$ sufficiently large. First, we note a result that is an extension of Lemma 18 of \cite{ahmed2026}

\begin{lemma}\label{lem:V_finite_moments}
    For $q>1$ fixed, define:
    \[V:=\sum_{i}e^{-q\lambda_0 T_i}.\]
    For all $k$, $\mathbb{E}[V^k]<\infty$.
\end{lemma}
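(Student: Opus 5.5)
The plan is to reduce moments of $V$ to moments of the type-0 population process. The mutation times $T_i$ form a Cox process driven by $(av+\mu)Z_0(s)\,ds$, given the type-0 trajectory. Replication-dependent mutations coincide with type-0 division events, so strictly speaking they are a thinning of those events. Either way, $M(t)$ is dominated by the number of type-0 "events" up to time $t$.

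First, by Abel summation, $V=\int_0^\infty e^{-q\lambda_0 s}\,dM(s)=q\lambda_0\int_0^\infty e^{-q\lambda_0 s}M(s)\,ds$. Then Minkowski's inequality in $L^k$ gives
\begin{align*}
\mathbb{E}[V^k]^{1/k}\leq q\lambda_0\int_0^\infty e^{-q\lambda_0 s}\,\mathbb{E}[M(s)^k]^{1/k}\,ds .
\end{align*}
This reduces the lemma to the bound $\mathbb{E}[M(s)^k]\leq C_k e^{k\lambda_0 s}$ for every integer $k$. With that bound the integrand is $\lesssim e^{-(q-1)\lambda_0 s}$, which is integrable precisely because $q>1$. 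Non-integer $k$ follows from integer $k$ by Jensen's inequality.

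For the moment bound, write $M(s)=R_s+(av+\mu)\int_0^s Z_0(u)\,du$. The integral term is handled by Minkowski once more, using the classical fact that $\mathbb{E}[Z_0(u)^k]\leq C_k e^{k\lambda_0 u}$ for a supercritical linear birth-death process. One way to see this fact is that $e^{-\lambda_0 u}Z_0(u)$ is a martingale converging in every $L^k$, since $Y$ is a deflated exponential with all moments; Doob's inequality then gives $\mathbb{E}[\sup_u (e^{-\lambda_0 u}Z_0(u))^k]<\infty$. Alternatively, one can induct on $k$ using the forward equations for factorial moments. The integral term therefore has $k$-th moment $O(e^{k\lambda_0 s})$.

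For the martingale $R$, I would apply the Burkholder–Davis–Gundy inequality. This gives $\mathbb{E}[|R_s|^k]\lesssim \mathbb{E}[[R]_s^{k/2}]=\mathbb{E}[M(s)^{k/2}]$. The argument then closes by induction on $k$ along powers of two. Letting $m_k(s)=\mathbb{E}[M(s)^k]^{1/k}$, we get
\begin{align*}
m_k(s)\leq C\,m_{k/2}(s)^{1/2}+C_k e^{\lambda_0 s},
\end{align*}
and the inductive hypothesis $m_{k/2}\lesssim e^{\lambda_0 s}$ gives $m_k\lesssim e^{\lambda_0 s/2}+e^{\lambda_0 s}\lesssim e^{\lambda_0 s}$. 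The base case $k=1$ is $\mathbb{E}[M(s)]=(av+\mu)\int_0^s\mathbb{E}[Z_0]\,du\lesssim e^{\lambda_0 s}$.

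The main obstacle I expect is the uniform-in-time $L^k$ control of $Z_0$, that is, the finiteness of all moments of $\sup_u e^{-\lambda_0 u}Z_0(u)$. Everything else is routine BDG and Minkowski bookkeeping. If quoting $L^k$ convergence of the martingale is unsatisfying, I would instead bound $\mathbb{E}[Z_0(u)^k]$ directly. Applying Itô's formula to $Z_0^k$ yields a linear ODE inequality $\frac{d}{du}\mathbb{E}[Z_0^k]\leq k\lambda_0\mathbb{E}[Z_0^k]+C\,\mathbb{E}[Z_0^{k-1}]$, and Grönwall's inequality with induction on $k$ then gives $\mathbb{E}[Z_0(u)^k]\leq C_k e^{k\lambda_0 u}$. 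That bound suffices here, because only the time-pointwise moments are needed inside the Minkowski integrals.
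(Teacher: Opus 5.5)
Your proof is correct, and it is a genuinely different route only in the sense that the paper gives no argument of its own. The paper just asserts that the proof of Lemma 18 in the companion paper \cite{ahmed2026} (the case $q=2$) goes through with $2$ replaced by $q$, so there is no in-paper argument to check yours against step by step. Your route is self-contained and uses only ingredients this paper already has:
\begin{itemize}
\item the Tonelli identity $V=q\lambda_0\int_0^\infty e^{-q\lambda_0 s}M(s)\,ds$, obtained by writing $e^{-q\lambda_0 T_i}=q\lambda_0\int_{T_i}^\infty e^{-q\lambda_0 s}\,ds$, which also avoids any boundary term in the Abel summation;
\item Minkowski's integral inequality in $L^k$;
\item the BDG recursion $m_k\lesssim m_{k/2}^{1/2}+e^{\lambda_0 s}$, which comes from $[R]_s=M(s)$.
\end{itemize}
Compared with the citation, your version shows directly that $q>1$ is exactly the condition needed, through the integrability of $e^{-(q-1)\lambda_0 s}$, and that nothing special to $q=2$ is used. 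The citation is shorter but leaves the reader to check that the $q=2$ argument really adapts.

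Two small points of rigor. First, your claim that $e^{-\lambda_0 u}Z_0(u)$ converges in every $L^k$ ``since $Y$ has all moments'' needs uniform integrability. Here that is immediate: $\mathbb{E}[Y]=1=Z_0(0)$, so Scheff\'e's lemma gives $L^1$ convergence. Hence $e^{-\lambda_0 u}Z_0(u)=\mathbb{E}[Y\mid\mathcal{F}_u]$, and conditional Jensen gives $\mathbb{E}[(e^{-\lambda_0 u}Z_0(u))^k]\le\mathbb{E}[Y^k]$. Your Gr\"onwall alternative also works.

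Second, your Cox-process description is slightly off for replication-independent mutations, since these show up as decrements of $Z_0$ and are not conditionally Poisson given the type-0 path. You never actually use this, though. All you need is that $(av+\mu)\int_0^t Z_0(s)\,ds$ is the $\mathcal{B}_t$-compensator of $M$, which is exactly the paper's martingale lemma for $R_t$.
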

\begin{proof}
    The exact same proof as in Lemma 18 of \cite{ahmed2026} works but replacing $2$ in the exponent with $q$.
\end{proof}

We first need an upper bound on the summation of  $p$th moment of $\Gamma_{1,j}(t)$ over $j$, where $\Gamma_{1,j}$ is the same as $\Gamma_{1,j}$ in \ref{eq:X1_decompose_gamma} (with the $j$-dependence made explicit).
\begin{lemma}\label{lemma:QuadVarBound}
For $p$ sufficiently large:
\begin{align}
    &\sum_{j=1}^\infty \EE\left[|\Gamma_{1,j}(t)|^p\right]\lesssim e^{(p/2)\lambda_0t},\label{eq:g1_bound_t}\\
    &\sum_{j=1}^\infty \EE\left[\sup_{t'\leq t}|\Gamma_{2,j}(t')|^p\right]\lesssim e^{(p/2)\lambda_0t}.\label{eq:g2_bound_t}
\end{align}

In fact for $p$ sufficiently large:
\begin{align}
    &\sup_{t\geq 0}e^{-(p/2)\lambda_0t}\sum_{j=R}^\infty \EE\left[|\Gamma_{1,j}(t)|^p\right]\llas{R}{\infty}1,\label{eq:g1_bound_R}\\
    &\sup_{t\geq 0}e^{-(p/2)\lambda_0t}\sum_{j=R}^\infty \EE\left[\sup_{t'\leq t}|\Gamma_{2,j}(t)|^p\right]\llas{R}{\infty}1.\label{eq:g2_bound_R}
\end{align}
\end{lemma}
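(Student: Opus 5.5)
I would prove the four bounds by conditioning on $\mathcal{B}_\infty$, the $\sigma$-algebra of the type-0 process together with the mutation times. Conditionally on $\mathcal{B}_\infty$ the clones $Z_1^{(i)}$ are independent birth–death processes started at the known times $T_i$. Hence both $\Gamma_{1,j}(t)$ and $\Gamma_{2,j}(t)$ are sums of independent, conditionally mean-zero summands indexed by $\{i:T_i\le t\}$. Rosenthal's (or the Marcinkiewicz–Zygmund) inequality then gives, for $p\ge 2$,
\[
\EE\big[|\Gamma_{1,j}(t)|^p\,\big|\,\mathcal{B}_\infty\big]\lesssim_p \Big(\sum_{T_i\le t}\EE[m_j^{(i)}(t-T_i)^2\mid\mathcal{B}_\infty]\Big)^{p/2}+\sum_{T_i\le t}\EE[|m_j^{(i)}(t-T_i)|^p\mid\mathcal{B}_\infty].
\]
For the single-clone moments I would use the Burkholder–Davis–Gundy inequality together with $[m_j^{(i)}]_s\le E_j^{(i)}(s)$, the number of entrances to and exits from size $j$. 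This yields $\EE|m_j^{(i)}(s)|^p\lesssim \EE[E_j^{(i)}(s)^{p/2}]$.

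The key single-clone input is a summable-in-$j$ bound on $\EE[E_j(\infty)^{p/2}]$. Transience gives $E_j\le 2G_j$, where $G_j$ is the number of visits of the embedded jump chain to $j$. $G_j$ equals the probability of ever hitting $j$ (at most $1$) times a geometric variable whose return probability is $1-c/j$-type. This gives $\EE[G_j^{r}]\lesssim j^{r}\,\mathbb{P}(\text{hit } j)\lesssim j^{r}$, which is not summable. I would therefore weight the bound by occupation instead. $\EE[E_j]$ is comparable to $j\int_0^\infty P_j(s)\,ds$, and together with $\int_0^s P_k=\frac{q_1}{\lambda_1 k}\beta(s)^k$ from the proof of Corollary \ref{cor:sigmajk_large_jk} this is $O(1)$ per $j$. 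The $e^{-\lambda_0 s}$ weight, however, produces $\int e^{-\lambda_0 s}P_j(s)\,ds\asymp j^{-1-\lambda_0/\lambda_1}$ from Proposition 1 of \cite{ahmed2026}. The plan is to show, via a strong-Markov argument at the first hitting time of $j$, that
\[
\int_0^\infty e^{-\lambda_0 s}\,d\,\EE[E_j(s)^{p/2}]\lesssim j^{p/2}\,j^{-\lambda_0/\lambda_1}\,\EE[e^{-\lambda_0 H_j}],
\]
where $H_j$ is the hitting time of $j$, approximately $\lambda_1^{-1}\log j$. Campbell's theorem applied to the second Rosenthal term then gives $e^{\lambda_0 t}$ times a quantity of order $j^{p/2-\lambda_0/\lambda_1}e^{-\lambda_0 \lambda_1^{-1}\log j}$. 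This is the delicate part: summability in $j$ must come from the exponential time-discount converting into a power of $j$. I expect this is why the lemma needs "$p$ sufficiently large" only in combination with the first Rosenthal term, and I would re-balance, if needed, by bounding the $L^p$ norm through interpolation between the $L^2$ bound $\sup_t e^{-\lambda_0 t}\EE[\Gamma_{1,j}(t)^2]\lesssim j^{-1-\lambda_0/\lambda_1}$ (from the variance computations behind Remark \ref{rem:sigma_large_time_unif}) and a crude high-moment bound.

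For the first Rosenthal term, the conditional variance sum is $\sum_{T_i\le t}\EE[E_j^{(i)}(t-T_i)]$. Its $p/2$ moment is controlled by writing it as an integral against the mutation counting measure. I would split that measure into its compensator $(av+\mu)Z_0(s)\,ds$ plus the martingale $R$, and bound $Z_0(s)\le e^{\lambda_0 s}\sup_u e^{-\lambda_0 u}Z_0(u)$. Then I would use Lemma \ref{lem:V_finite_moments}, in the form $\sum_i e^{-q\lambda_0 T_i}$ having all moments, to get
\[
\EE\Big[\Big(\sum_{T_i\le t} \EE[E_j(t-T_i)]\Big)^{p/2}\Big]\lesssim e^{(p/2)\lambda_0 t}\, a_j^{p/2},
\]
with $a_j\asymp\int e^{-\lambda_0 s}\,d\EE[E_j(s)]\asymp j\cdot j^{-1-\lambda_0/\lambda_1}=j^{-\lambda_0/\lambda_1}$. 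Summability of $a_j^{p/2}$ requires $p\lambda_0/(2\lambda_1)>1$, i.e.\ $p>2\lambda_1/\lambda_0$. This is exactly the "$p$ sufficiently large" condition, and it yields \eqref{eq:g1_bound_t}. Since the bound is a convergent series uniformly in $t$ after the factor $e^{-(p/2)\lambda_0 t}$, its tails vanish, which gives \eqref{eq:g1_bound_R}.

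For $\Gamma_{2,j}$, I would use the representation derived in the proof of Lemma \ref{lem:tau_n_tau_n_prime_X}. There $\Gamma_{2,j}(t')$ is $\int_0^{t'}$ of a combination with coefficients $\lesssim j$ of the centred counts $D_k(u)=\sum_{T_i\le u}(\mathbf{1}_{Z_1^{(i)}(u-T_i)=k}-P_k(u-T_i))$ for $k\in\{j-1,j,j+1\}$. The supremum over $t'\le t$ is then bounded by $\int_0^t|\cdot|\,du$. Minkowski's inequality gives $\|\sup_{t'\le t}\Gamma_{2,j}(t')\|_p\lesssim j\int_0^t\|D_k(u)\|_p\,du$. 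A conditional Rosenthal bound applied to the independent Bernoulli summands gives $\|D_k(u)\|_p\lesssim e^{\lambda_0 u/2}b_k^{1/2}$, with $b_k\asymp k^{-1-\lambda_0/\lambda_1}$ from the Campbell integral $\int e^{-\lambda_0 s}P_k(s)\,ds$, plus a lower-order term. Integrating in $u$ gives $e^{\lambda_0 t/2}\, j\cdot j^{-(1+\lambda_0/\lambda_1)/2}$. Raised to the $p$-th power, this is summable only when $p(\lambda_0/\lambda_1-1)/2<-1$ fails badly because of the factor $j$. So the naive bound loses, and the main obstacle is to recover the cancellation in $\mathcal{A}f_j$: the three coefficients $(a+b)(j-1)$, $d(j+1)$ and $-(a+b+d)j$ nearly cancel against the smoothness of $k\mapsto D_k$. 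I would handle this by summing by parts in $k$, writing $\mathcal{A}f_j$ as a discrete second difference plus a drift term of size $O(1)$ times a first difference. I would then estimate differences $D_{k+1}-D_k$, whose conditional variance involves $\int e^{-\lambda_0 s}(P_{k+1}-P_k)^2$-type and diagonal terms. If that fails, the fallback is to avoid $\Gamma_{2,j}$ altogether: bound $\Gamma_{2,j}=e^{\lambda_0 t/2}X^{(1)}_{t,j}-\Gamma_{1,j}$ directly, controlling $\sup_{t'}$ of the centred counts via Rosenthal on the $D_j$ process and a chaining/dyadic-time argument. Once the per-$j$ bound is $\lesssim e^{p\lambda_0 t/2}c_j$ with $\sum c_j<\infty$, \eqref{eq:g2_bound_t} and \eqref{eq:g2_bound_R} follow as before.
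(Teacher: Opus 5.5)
Your treatment of $\Gamma_{1,j}$ is organized much like the paper's. You use conditional Rosenthal followed by per-clone BDG, where the paper uses BDG on $\Gamma_{1,j}$ followed by Rosenthal on $[\Gamma_{1,j}]_t=\sum_{T_i\le t}E_{i,j}(t-T_i)$. The compensator/$R$ split of $\sum_{T_i\le t}\bar e_j(t-T_i)$, where $\bar e_j(s)=\EE[E_j(s)]$, together with $\sup_u e^{-\lambda_0 u}Z_0(u)$ and Lemma \ref{lem:V_finite_moments}, is the same. However, your single-clone input is wrong, and the gap it creates is never closed.

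The visits to $j$ are not geometric with return probability $1-c/j$. The jump chain of a linear birth--death process moves up and down with the constant probabilities $\frac{a+b}{a+b+d}$ and $\frac{d}{a+b+d}$. So the return probability to $j$ is at most $\frac{2d}{a+b+d}<1$ uniformly in $j$, and $E_j(\infty)$ is dominated by a geometric variable not depending on $j$ (Lemma 8 of \cite{ahmed2026}). Your claim also contradicts your own correct observation $\EE[E_j(\infty)]=O(1)$.

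The spurious $j^{p/2}$ leaves your second Rosenthal term non-summable. The interpolation fallback does not rescue it, because it uses a false $L^2$ bound: $\EE[\Gamma_{1,j}(t)^2]=\EE[[\Gamma_{1,j}]_t]=(av+\mu)\int_0^t\bar e_j(t-s)e^{\lambda_0 s}\,ds$ is of order $e^{\lambda_0 t}j^{-\lambda_0/\lambda_1}$. That is a factor $j$ larger than you state; the order $j^{-1-\lambda_0/\lambda_1}$ in Remark \ref{rem:sigma_large_time_unif} is that of $X_{t,j}$, where $\Gamma_{1,j}$ and $\Gamma_{2,j}$ nearly cancel.

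Use instead the correct input $\EE[E_j(s)^q]\le C_q\,\mathbb{P}(\zeta_j\le s)\le C_q\,\bar e_j(s)$, with $\zeta_j$ the hitting time of $j$. The second Rosenthal term is then $\lesssim\int_0^t\bar e_j(t-s)e^{\lambda_0 s}\,ds$, which is still only $O(e^{\lambda_0 t}j^{-\lambda_0/\lambda_1})$ for each $j$. Summability must be paid for with the surplus factor $e^{-(p/2-1)\lambda_0 t}$. The paper does this through $\sum_j\bar e_j(s)\lesssim e^{\lambda_1 s}$ (the expected number of events in one clone), so the $j$-sum is $\lesssim e^{\lambda_1 t}\le e^{(p/2)\lambda_0 t}$ once $p\ge 2\lambda_1/\lambda_0$. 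Thus ``$p$ large'' is needed here too, and the paper uses the same device for the lower-order martingale part of the compensator split. Likewise, \eqref{eq:g1_bound_R} needs an explicit tail estimate such as $\sum_{j\ge R}\bar e_j(t)\le L(R)e^{ct}$ with $L(R)\to 0$ and $c<(p/2)\lambda_0$. Appealing to ``tails of a series converging uniformly in $t$'' is not enough.

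For $\Gamma_{2,j}$ you end up with no bound. The Minkowski-in-time estimate $j\int_0^t\|D_k(u)\|_p\,du$ loses a factor $\sqrt j$, because it ignores that $\mathbf{1}_{Z=j}$ decorrelates on the holding-time scale $1/j$. Your proposed repair by summation by parts in $k$ targets a cancellation among $(a+b)(j-1)$, $d(j+1)$, $-(a+b+d)j$ that does not exist. For a fixed clone and time the events $\{Z=j-1\},\{Z=j\},\{Z=j+1\}$ are disjoint, so $D_{k+1}-D_k$ has variance of the same order as $D_k$.

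The missing idea is to keep the factor $j$ attached to the per-clone time integral. The occupation functional $H_{i,j}(u)=j\int_0^u\mathbf{1}_{Z_1^{(i)}(s)=j}\,ds$ is non-decreasing. The number of visits to $j$ is geometric with $j$-independent parameter, and each visit lasts an $\mathrm{Exp}((a+b+d)j)$ time. Hence $\EE[H_{i,j}(u)^q]\le C_q\,\mathbb{P}(\zeta_j\le u)$ uniformly in $j$. Then $\Gamma_{2,j}=(a+b)\widetilde D_{j-1}+d\widetilde D_{j+1}-(a+b+d)\widetilde D_j$, with $\widetilde D_k(t)=\sum_{T_i\le t}\bigl(H_{i,k}(t-T_i)-\bar h_k(t-T_i)\bigr)$ and $\bar h_k=\EE[H_{i,k}]$. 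Each $\widetilde D_k$ separately satisfies the required estimate, with no cancellation between the three.

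Since $\widetilde D_k$ is not a martingale, the supremum over $t'\le t$ is handled by a maximal Rosenthal inequality for sums of conditionally independent monotone processes (Lemma \ref{lem:rosenthal_maximal}). After that, the $\Gamma_{1,j}$ argument runs with $\bar h_j$ in place of $\bar e_j$. Your other fallback, $\Gamma_{2,j}=e^{\lambda_0 t/2}X^{(1)}_{t,j}-\Gamma_{1,j}$ plus an unspecified chaining argument for the centred counts, does not supply the missing estimate.
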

\begin{proof}
    See Appendix Section \ref{sec:quad_var_bound}
\end{proof}

\begin{proposition}
    Theorem \ref{thm:clt_detection_size} holds, where weak convergence is taken under the $\ell_p$-topology, for $p$ sufficiently large.
\end{proposition}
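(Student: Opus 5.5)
Since the product-topology convergence has already been established, I only need tightness of $(\overline{X}_{n,j})_{j\ge1}$ in $\ell^p$ for $p$ large. Convergence then follows from Prokhorov's theorem, because finite-dimensional projections are continuous on $\ell^p$. Stable convergence transfers exactly as in the proof of Theorem \ref{thm:clt}. On $\Omega_0^\infty$, $\overline{X}_{n,j}=n^{-\lambda_0/(2\lambda_1)}e^{\lambda_0\tau_n/2}X_{\tau_n,j}$, and by Lemma \ref{lem:tau_n_in_range} the prefactor is bounded by $e^{\lambda_0 A/2}$ on an event of probability at least $1-\varepsilon$, uniformly in $n$. It therefore suffices to show tightness of $(X_{\tau_n,j})_j$ restricted to $\{\tau_n\in I_n^A\}$.

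As before, I would use the compact sets $K_{(r_k),M}=\{x:\Vert x\Vert_p^p\le M,\ \sum_{j\ge r_k}|x_j|^p<1/k\ \forall k\}$. The key uniform estimate is
\begin{align*}
\sup_n \mathbb{E}\Big[\sup_{t\in I_n^A}e^{-p\lambda_0 t/2}\sum_{j\ge R}|e^{\lambda_0 t/2}X_{t,j}|^p\Big]\llas{R}{\infty}1,
\end{align*}
together with finiteness of the same quantity at $R=1$. Once this holds, Markov's inequality and a union bound over $k$, with $r_k$ chosen so that the tail sum at $R=r_k$ is at most $\varepsilon e^{-k}/k$, give $\mathbb{P}(X_{\tau_n}\notin K)\lesssim\varepsilon$ uniformly in $n$.

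To prove the estimate, I would split $e^{\lambda_0 t/2}X_{t,j}$ as in \eqref{eq:X_t_decomposition}, namely into $\Gamma_{1,j}(t)+\Gamma_{2,j}(t)+\int_0^tP_j(t-s)\,dR_s+(av+\mu)\int_0^tP_j(t-s)(Z_0(s)-e^{\lambda_0 s}Y)\,ds$, and use $|a+b+c+d|^p\le4^{p-1}(\dots)$.

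For the $\Gamma_{2,j}$ term, \eqref{eq:g2_bound_R} already contains the supremum over $t'\le t$, and I would apply it at $t=t_{n,+}^A$. Because $e^{-p\lambda_0 t/2}\le e^{p\lambda_0A}e^{-p\lambda_0 t_{n,+}^A/2}$ on $I_n^A$, this handles the term. For $\Gamma_{1,j}$, which is a martingale, Doob's $L^p$ inequality gives $\mathbb{E}\sup_{t\le t_{n,+}^A}|\Gamma_{1,j}|^p\le C_p\mathbb{E}|\Gamma_{1,j}(t_{n,+}^A)|^p$, and then \eqref{eq:g1_bound_R} applies.

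For the two remaining terms, the $j$-dependence enters only through $P_j$. Since $\sum_j P_j(u)^p\le\sum_j P_j(u)\le1$, and since $\sum_{j\ge R}P_j(u)^p\le \sup_{j\ge R}P_j(u)^{p-1}\lesssim R^{-(p-1)}$ (using $P_j\le 1/j$), I would bound $|\int_0^tP_j(t-s)\,dR_s|^p$ via integration by parts. This gives $P_j(0)R_t+\int_0^t P_j'(t-s)R_s\,ds$, where $\sum_j|P_j'|$ is bounded using the Kolmogorov forward equation, $|P_j'|\le C(jP_{j-1}+jP_j+jP_{j+1})$.

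Doing the same with the factor $Z_0(s)-e^{\lambda_0 s}Y$, which has $\mathbb{E}|\cdot|^p\lesssim e^{p\lambda_0 s/2}$ by BDG applied to the martingale $L$, together with Hölder in $s$, produces a bound of order $e^{p\lambda_0 t/2}$ with a tail factor vanishing in $R$. The supremum over $t\in I_n^A$ is handled by Doob's inequality for $R$ and $L$, because the window has fixed length $2A$.

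I expect the main obstacle to be exactly this supremum over the random-time window, together with obtaining $R$-tail decay that is uniform in $t$ and carries the correct $e^{p\lambda_0t/2}$ scaling for the deterministic-kernel terms. The $j$-weights in $P_j'$ grow with $j$, which is presumably why $p$ must be taken large: the tails $\sum_{j\ge R}$ must be summable after raising to the $p$-th power, mirroring the role of $p$ in Lemma \ref{lemma:QuadVarBound}.

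For Point 2, I would add the coordinate $\overline{X}_{n,\ge1}$. It is a single real coordinate, so tightness only requires $\sup_n\mathbb{P}(|\overline{X}_{n,\ge1}|>M)\to0$, which follows from its product-topology convergence, obtained via the remark after Theorem \ref{thm:clt}.
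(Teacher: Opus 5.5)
Your plan is essentially the paper's proof. Both restrict to $\{\tau_n\in I_n^A\}$, use the same precompact sets $K_{(r_k),M}$, and split $e^{\lambda_0 t/2}X_{t,j}$ into $\Gamma_{1,j}+\Gamma_{2,j}$ plus the $dR$ and $Z_0-e^{\lambda_0 s}Y$ pieces. Both control $\Gamma_{1,j},\Gamma_{2,j}$ by Doob and Lemma~\ref{lemma:QuadVarBound}, and treat the $dR$ piece by integration by parts.

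The one structural difference is that you estimate the two deterministic-kernel pieces in $\ell^p$. The paper bounds them in $\ell^1$, using $\sum_j P_j\le C$ and $\sum_j|P_j'|\le C$. Your choice helps with the tails: $P_j\le 1/j$ gives $\sum_{j\ge R}P_j(u)^p\lesssim R^{-(p-1)}$ uniformly in $u$. By contrast, the paper's $\ell^1$ tails $\sum_{j\ge R}P_j(u)=\mathbb{P}(Z_1^{(1)}(u)\ge R)$ are not uniformly small in $u$, so they need a dominated-convergence step in the time variable that the paper leaves implicit. You also correctly keep the boundary term $\mathbf{1}_{j=1}R_t$ from the integration by parts, which the paper's tightness argument drops.

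One step is mis-justified as written. Summing the forward-equation bound $|P_j'|\le C(jP_{j-1}+jP_j+jP_{j+1})$ over $j$ gives only $\sum_j|P_j'(u)|\lesssim\mathbb{E}[Z_1^{(1)}(u)]\asymp e^{\lambda_1 u}$. The triangle inequality throws away the cancellation between flow into and out of size $j$. Used as an $\ell^1$ bound inside $\int_0^t(\cdot)\,|R_s|\,ds$, this would produce $e^{\lambda_1 t}\gg e^{\lambda_0 t/2}$.

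The uniform bound is nevertheless true, but it comes from the explicit geometric law $P_j=(1-\alpha)(1-\beta)\beta^{j-1}$ together with $\beta'=\frac{\lambda_1}{q_1}(1-\alpha)(1-\beta)$. Differentiating in $u$ gives $\sum_j|P_j'(u)|\le C$ and $\sup_u|P_j'(u)|\lesssim 1/j$. Hence $\sum_{j\ge R}|P_j'(u)|^p\lesssim R^{-(p-1)}$ uniformly in $u$, exactly parallel to your $P_j$ estimate.

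Within your $\ell^p$ framework the crude bound can also be rescued, but you would have to make the argument. It gives $|P_j'|\le C$, so $\Vert(P_j'(u))_j\Vert_{\ell^p}\lesssim e^{\lambda_1 u/p}$, which is integrable against $e^{\lambda_0 s/2}$ once $p>2\lambda_1/\lambda_0$. The $R$-tails are then handled by dominated convergence in $u$ rather than uniformly. Note that this is not where the paper's ``$p$ sufficiently large'' comes from; that requirement comes from Lemma~\ref{lemma:QuadVarBound}. With either repair the proposal is complete.
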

\begin{proof}
    It sufices to show that for any $\varepsilon>0$, there exists sequence $(r_k)_k$ and positive $M$ such that:
    \begin{align*}
        \mathbb{P}(X_{\tau_n}\not\in K_{(r_k)_k,M},\Omega_0^\infty)\leq \varepsilon.
    \end{align*}
    for all $n\geq 1$. Here:
    \[K_{(r_k)_k,M}:=\{x\in \ell^p:\Vert x\Vert_p\leq M,\forall k\in\mathbb{N},\sum_{j=r_k}^\infty |x_j|^p\leq 1/k\}\]
    To show this, let $\delta=\varepsilon/2$ and observe that there is $A$ large enough such that:
    \begin{align*}
        \mathbb{P}(X_{\tau_n}\not\in K_{(r_k)_k,M},\Omega_0^\infty)-\delta\leq \mathbb{P}(X_{\tau_n}\not\in K_{(r_k)_k,M},\tau_n\in I_n^A).
    \end{align*}
    We may further decompose
    \begin{align}
        \mathbb{P}(X_{\tau_n}\not\in K_{(r_k)_k,M},\tau_n\in I_n^A)&\leq \mathbb{P}(\sup_{t\in I_n^A}\Vert X_t\Vert_p\geq M)\label{eq:tight_taun_term1}\\
        &+\sum_{k=1}^\infty \mathbb{P}\left(\sum_{j=r_k}^\infty |X_{\tau_n,j}|^p\geq 1/k,\tau_n\in I_n^A\right)\label{eq:tight_taun_term2}
    \end{align}
    First we control \ref{eq:tight_taun_term1}. To do so, we use the decomposition of $X_t$ considered in \ref{eq:X_t_decomposition} to get:
    \begin{align}
        \ref{eq:tight_taun_term1}&\leq \mathbb{P}(\sup_{t\in I_n^A}\Vert X_t^{(1)}\Vert_p\geq M/3)\label{eq:tight_taun_term1_1}\\
        &+ \mathbb{P}(\sup_{t\in I_n^A}\Vert X_t^{(2)}\Vert_1\geq M/3)\label{eq:tight_taun_term1_2}\\
        &+ \mathbb{P}(\sup_{t\in I_n^A}\Vert X_t^{(3)}\Vert_1\geq M/3)\label{eq:tight_taun_term1_3}
    \end{align}
    To control \ref{eq:tight_taun_term1_1}, we use the decomposition \ref{eq:X1_decompose_gamma}:
    \begin{align*}
        \ref{eq:tight_taun_term1_1}&\leq \mathbb{P}(e^{-\frac{p}{2}\lambda_0t_{n,-}^A}\sup_{t\in I_n^A}\sum_{j=1}^\infty |\Gamma_{1,j}(t)|^p\geq M^p/2^p) +\mathbb{P}(e^{-\frac{\lambda_0}{2}t_{n,-}^A}\sup_{t\in I_n^A}\sum_{j=1}^\infty |\Gamma_{2,j}(t)|^p\geq M^p/2^p).
    \end{align*}
    $\sum_{j=1}^\infty|\Gamma_{1,j}(t)|^p$ is a $\mathcal{I}_t$-submartingale, so that by Doob's submartingale inequality and Lemma \ref{lemma:QuadVarBound} (specifically, bound \ref{eq:g1_bound_t}):
    \begin{align}
        &\mathbb{P}(e^{-\frac{p}{2}\lambda_0t_{n,-}^A}\sup_{t\in I_n^A}\sum_{j=1}^\infty |\Gamma_{1,j}(t)|^p\geq M^p/2^p)\leq \frac{C}{M^p}n^{-\frac{p}{2}\frac{\lambda_0}{\lambda_1}}\sum_{j=1}^\infty\mathbb{E}[|\Gamma_{1,j}(t_{n,+}^A)|^p]\\
        &\lesssim 1/M^p\nonumber
    \end{align}
    uniformly in time.

    For the $\Gamma_{2,j}$ term, we use Markov's inequality and Lemma \ref{lemma:QuadVarBound} (bound \ref{eq:g2_bound_t}). Now for \ref{eq:tight_taun_term1_2}, using integration by parts:
    \begin{align*}
        X_{t,j}^{(2)}=\int_0^t P_{j}'(t-s)R_s\,ds.
    \end{align*}
    Notice that for all $s\geq 0$, $\sum_{j=1}^\infty |P_j'(s)|\leq C<\infty$. Therefore:
    \begin{align*}
        \ref{eq:tight_taun_term1_2}&\leq \mathbb{P}(e^{-\frac{\lambda_0}{2}t_{n,-}^A}C\sup_{t\in I_n^A}\int_0^t |R_s|\,ds\geq M)\\
        &\leq \frac{1}{M}C_2n^{-\frac{\lambda_0}{2\lambda_1}}\int_0^{t_{n,+}^A}\sqrt{\mathbb{E}[|R_s|^2]}\,ds\\
        &\leq \frac{1}{M}C_3n^{-\frac{\lambda_0}{2\lambda_1}}n^{\frac{\lambda_0}{2\lambda_1}}=\frac{C_3}{M}\overset{M\to\infty}{\to}0.
    \end{align*}
    For \ref{eq:tight_taun_term1_3}, we may quite similarly control it using that $\sum_{j=1}^\infty P_j(s)\leq C$:
    \begin{align*}
        \ref{eq:tight_taun_term1_3}&\leq \mathbb{P}(e^{-\frac{\lambda_0}{2}t_{n,-}^A}C\sup_{t\in I_n^A} \int_0^t |Z_0(s)-Ye^{\lambda_0 s}|\,ds\geq M)\\
        &\leq \frac{1}{M}n^{-\frac{\lambda_02}{2\lambda_1}}C_2\int_0^{t_{n,+}^A}\sqrt{\mathbb{E}[|Z_0(s)-Ye^{\lambda_0s}|^2]}\,ds\\
        &\leq \frac{1}{M}C_3\overset{M\to\infty}{\to}0.
    \end{align*}

    Finally, we control \ref{eq:tight_taun_term2}
    \begin{align}
        \ref{eq:tight_taun_term2}&\leq \sum_{k=1}^\infty\mathbb{P}(\sup_{t\in I_n^A}\Vert (X_{t,j}^{(1)})_{j=r_k}^\infty\Vert_p\geq 1/k^{1/p})\label{eq:tight_taun_term2_1}\\
        &+ \sum_{k=1}^\infty\mathbb{P}(\sup_{t\in I_n^A}\Vert (X_{t,j}^{(2)})_{j=r_k}^\infty\Vert_1\geq 1/k^{1/p})\label{eq:tight_taun_term2_2}\\
        &+ \sum_{k=1}^\infty\mathbb{P}(\sup_{t\in I_n^A}\Vert (X_{t,j}^{(3)})_{j=r_k}^\infty\Vert_1\geq 1/k^{1/p})\label{eq:tight_taun_term2_3}.
    \end{align}
    These terms are dealt with in an analogous manner as terms \ref{eq:tight_taun_term1_1}, \ref{eq:tight_taun_term1_2}, \ref{eq:tight_taun_term1_3}, but replacing bounds \ref{eq:g1_bound_t} and \ref{eq:g2_bound_t} with bounds \ref{eq:g1_bound_R} and \ref{eq:g2_bound_R}, respectively.
\end{proof}

\subsection{Asymptotic variance for the class of estimators $\{\widehat{b}_{\leq j}(t)\}_{j\in\mathbb{N}}$}\label{app:asymp_var_estimators}

Here we prove Proposition \ref{prop:asymp_var_estimators}.

\begin{proof}
    Remark that $F_{\leq j}(\ell)$ is holomorphic over the region $\ell\in \{x+iy:x>0\}:=C_{>0}$. Furthermore, $F_{\leq j}(\ell)$ converges to $1$ as $j\to\infty$ uniformly over small disks $D\subseteq C_{>0}$ centered about any point in $C_{>0}$. Hence, derivatives converge to that of the limiting function:
\begin{align*}
    \frac{d}{d\ell}\sum_{k=1}^j F_k(\ell)|_{\ell=b}\overset{j\to\infty}{\to}\frac{d}{d\ell}[1]|_{\ell=b}=0.
\end{align*}

As $\sigma_{i_1,i_2}$ is absolutely summable and $\mu_{\leq j}\to \mu_{\geq 1}$ as $j\to \infty$, the numerator also tends to $0$. We show that the denominator converges to $0$ quickly relative to the numerator, in which case $\sigma_{b,\leq j}^2\to \infty$ as $j\to\infty$. First we investigate the numerator and the rate at which each term approaches its limit as $j\to\infty$. Note:
\begin{align*}
    \mu_{\geq1 }-\mu_{\leq j}=\mu_{\geq j+1}=\Theta_{j\to\infty}\left(j^{-\lambda_0/\lambda_1}\right).
\end{align*}

Write $\delta_j=\mu_{>j}/\mu_{\geq 1}$ and $S_{J,I}=\sum_{i_1\in J,i_2\in I}\sigma_{i_1,i_2}$, where $J,I\in \mathcal{P}(\mathbb{N})$. The numerator of $\sigma_{b,\leq j}^2$ can be written as:
\begin{align*}
    &S_{\leq j,\leq j}-2(1-\delta_j)S_{\leq j,<\infty}+(1-\delta_j)^2S_{<\infty,<\infty}\\
    &=S_{\leq j,\leq j}\delta_j^2-2(1-\delta_j)(S_{\leq j,\infty}-S_{\leq j,\leq j})+(1-\delta_j)^2(S_{<\infty,<\infty}-S_{\leq j,\leq j}).
\end{align*}

Writing $S_{\leq j,<\infty}-S_{\leq j,\leq j}=S_{\leq j,>j}$ and $S_{<\infty,<\infty}-S_{\leq j,\leq j}=2S_{\leq j,>j}+S_{>j,>j}$, we obtain:
\begin{align*}
    &=S_{\leq j,\leq j}\delta_j^2-2S_{\leq j,>j}(1-\delta_j)\delta_j+(1-\delta_j)^2 S_{>j,>j}.
\end{align*}

As $\sum_{i_1,i_2=1}^\infty \sigma_{i_1,i_2}$ converges absolutely, $S_{\leq j,\leq j}\to S_{<\infty,\infty}$ as $j\to\infty$ and $S_{\leq j,>j}=o_{j\to \infty}(1)$. Therefore:
\begin{align*}
    S_{\leq j,\leq j}\delta_j^2=o_{j\to\infty}(j^{-\lambda_0/\lambda_1})\text{ and }S_{\leq j,>j}\delta_j=o_{j\to\infty}(j^{-\lambda_0/\lambda_1}).
\end{align*}

The third term requires more delicate control of $S_{>j,>j}$. Note by the asymptotics of $\sigma_{j,k}$ and integral comparisons, there exists constants $C_1,C_2>0$ such that for $j$ large enough:
\[C_1\int_{j}^\infty\int_j^\infty \frac{1}{xy(x+y)^{\lambda_0/\lambda_1}}\,dy\,dx+C_2\int_j^\infty\frac{1}{x^{1+\lambda_0/\lambda_1}}\,dx-S_{>j,>j}=\mathcal{O}_{j\to\infty}(j^{-1-\lambda_0/\lambda_1}).\]
So, if we show that:
\begin{align}
    &\int_j^\infty \int_j^\infty\frac{1}{xy(x+y)^{\lambda_0/\lambda_1}}\,dy\,dx=\Theta(j^{-\lambda_0/\lambda_1}), \text{ and}\label{eq:asymp_integral_1}\\
    &\int_{j}^\infty \frac{1}{x^{1+\lambda_0/\lambda_1}}=\Theta(j^{-\lambda_0/\lambda_1}),\label{eq:asymp_integral_2}
\end{align}
then so to will $S_{>j,>j}=\Theta(j^{-\lambda_0/\lambda_1})$. If this is so, the numerator of the variance is $\Theta(j^{-\lambda_0/\lambda_1})$. \ref{eq:asymp_integral_2} is trivial by computing the integral. Now, by using the change of variables $y=xt$ :
\begin{align*}
    \ref{eq:asymp_integral_1}=\int_{j}^\infty x^{-1-\lambda_0/\lambda_1}\int_{j/x}^\infty \frac{1}{t(1+t)^{\lambda_0/\lambda_1}}\,dt\,dx.
\end{align*}
Splitting the range of integration for the inner integral between $j/x$ to $1$ and $1$ to $\infty$ yields:
\begin{align*}
    =\Theta_{j\to\infty}\left(\int_j^\infty x^{-1-\lambda_0/\lambda_1}(\log(x/j)+C)\,dx\right)=\Theta_{j\to\infty}(j^{-\lambda_0/\lambda_1}),
\end{align*}
as desired.

For the denominator, we exploit the holomorphicity of $F_{\leq j}(\ell)$. Fix $b\in\mathbb{R}_{>0}$. Fix any small $\varepsilon>0$. By extending the proof of Proposition 1 of \cite{ahmed2026}, there exists a disk in $C_{>0}$, $D_{\varepsilon}$,  around $b$ sufficiently small so that:
\[\mu_{\geq j}(\ell)=(av+\mu)\frac{q_1(\ell)\Gamma(\lambda_0/\lambda_1(\ell)+1)}{\lambda_0j^{\lambda_0/\lambda_1(\ell)}}(1-p_1(\ell))^{-\lambda_0/\lambda_1(\ell)}+\mathcal{O}_{j\to\infty}(j^{-1-\lambda_0/\lambda_1+\varepsilon}),\]
with the error $\mathcal{O}$ term uniform for $\ell\in D_{\varepsilon}$. Denote the error term $R_j(\ell)$. Notice that using the Cauchy integral formula inequalities $|R_j'(b)|\leq C\max_{\gamma\in D_{\varepsilon}}|R_j(\gamma)|=\mathcal{O}_{j\to\infty}(j^{-1-\lambda_0/\lambda_1+\varepsilon})$. So therefore, taking derivatives:
\[\mu_{\geq j}'(b)=\Theta(j^{-\lambda_0/\lambda_1}\log j),\]
implying:
\[F'_{\geq j}(b)=\Theta(j^{-\lambda_0/\lambda_1}\log j).\]
So, we obtain:
\begin{align*}
    \sigma_{b,\leq j}^2=\Theta(j^{\lambda_0/\lambda_1}(\log j)^{-2}).
\end{align*}

\end{proof}

\printbibliography

\appendix

\section{Appendix}

\subsection{Simulation Details}

A modification of the Gillespie algorithm is used to simulate traces of the birth-death process. The modification enables tracking the ancestral relationships of the whole population. The default parameter values used are described in Table \ref{table:default_params}. Note these are the same as in \cite{ahmed2026}.

\begin{table}[H]
  \centering
  \begin{tabular}{|c|p{2.5cm}|}
\hline
\textbf{Parameter} & \textbf{Default Value} \\
\hline
$a$ & 0.1\\
$d$ & 0.001\\
$v$ & 0.01\\
$\mu$ & 0\\
$b$ & 0.04\\
\hline
\end{tabular}
\caption{Default simulation parameter values}\label{table:default_params}
\end{table}

For both Figures \ref{fig:b_1_consistency} and \ref{fig:b_j_deviation}, the same underlying simulation data are used. 1000 runs are performed and stopped at detection sizes $n\in \{10^4,10^5,10^6,10^6,4\cdot 10^7\}$ (with runs going extinct being ignored). 

\subsection{Uniform fourth moment control: Proof of \ref{lem:control_fourth_moment}}\label{sec:control_fourth_moment}
\begin{proof}
    We first decompose $X_{t,j}$ into three pieces:
    \begin{align}
        e^{\lambda_0t/2}X_{t,j}&=\sum_{T_i\le t}(\mathbf{1}_{Z_1^{(i)}(t-T_i)=j}-P_j(t-T_i))\label{eq:fourth_moment_term_1}\\
        &+\int_0^t P_j(t-s)\,dR_s\label{eq:fourth_moment_term_2}\\
        &+(av+\mu)\int_0^t P_j(t-s)(Z_0(s)-Ye^{\lambda_0s})\,ds.\label{eq:fourth_moment_term_3}
    \end{align}
    It suffices to show the fourth moment for each is $\mathcal{O}(e^{2\lambda_0 t})$. The first term is controlled by Rosenthal's inequality. Using that $\mathbb{E}[(\mathbf{1}_{Z_1^{(i)}(t-T_i)}-P_j(t-T_i))^p|\mathcal{B}_\infty]=0$, we then obtain:
    \begin{align*}
        \mathbb{E}\left[\left(\sum_{T_i\leq t}(\mathbf{1}_{Z_1^{(i)}(t-T_i)}-P_j(t-T_i))\right)^4\right]&\leq C\mathbb{E}\left[(\sum_{T_i\leq t}1^2)^2+\sum_{T_i\leq t}1^4\right]\\
        &\leq 2\mathbb{E}[M_t^2]=\mathcal{O}_{t\to\infty}(e^{2\lambda_0 t}).
    \end{align*}

    The last two terms are controlled via the BDG inequalities. For term \ref{eq:fourth_moment_term_2}, we first use integration by parts to get:
    \begin{align*}
        \ref{eq:fourth_moment_term_2}=\int_0^t P_j'(t-s)R_s\,ds.
    \end{align*}
    Note that $P_j'$ is uniformly bounded, say by $K$. Therefore, by the Minkowsky integral inequality along with the BDG inequalities (using \ref{eq:R_var}), we obtain:
    \begin{align*}
        &\left(\mathbb{E}\left[\left(\int_0^t P_j'(t-s)R_s\,ds\right)^4\right]\right)^{1/4}\\
        &\leq K\int_0^t\mathbb{E}\left[R_s^4\right]^{1/4}\,ds\leq K\int_0^t\mathbb{E}\left[M_s^2\right]^{1/4}\,ds=\mathcal{O}_{t\to\infty}(e^{\frac{1}{2}\lambda_0 t/2}).
    \end{align*}
    Thus:
    \begin{align*}
        \mathbb{E}\left[\ref{eq:fourth_moment_term_2}^4\right]=\mathcal{O}_{t\to\infty}(e^{2\lambda_0 t}).
    \end{align*}
    Remark that $\mathbb{E}\left[(e^{-\lambda_0 s}Z_0(s)-Y)^4\right]=\mathcal{O}_{s\to\infty}(e^{-2\lambda_0 s})$. Therefore, by the Minkowsky triangle inequality, we immediately get:
    \begin{align*}
        \mathbb{E}\left[\ref{eq:fourth_moment_term_3}^4\right]=\mathcal{O}_{t\to\infty}\left(e^{2\lambda_0 t}\right).
    \end{align*}

    The lemma is then proved.

\end{proof}

\subsection{A Rosenthal-maximal inequality for increasing processes}
Here, we prove a Rosenthal-like inequality that will prove useful:
\begin{lemma}\label{lem:rosenthal_maximal}
    Let $\{V_i(\cdot)\}_{1\leq i\leq N}$ be independent, non-decreasing, non-negative cadlag processes. Fix any $T>0$ and $p\geq 2$. Let $V_i(T)\in L^p$ for $1\leq i\leq N$. Then:
    \begin{align*}
        \mathbb{E}\left[\sup_{t\leq T}\left|\sum_{i=1}^N V_i(t)-\mathbb{E}[V_i(t)]\right|^p\right]\leq C_p\mathbb{E}\left[\sum_{i=1}^N \mathbb{E}[V_i(T)^p]+\left(\sum_{i=1}^N \mathbb{E}[V_i(T)^2]\right)^{p/2}\right].
    \end{align*}
\end{lemma}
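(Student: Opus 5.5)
The plan is to split the supremum into a martingale part and a deterministic compensator part, exploiting monotonicity. Write $U(t)=\sum_{i=1}^N (V_i(t)-\mathbb{E}[V_i(t)])$. Since each $V_i$ is non-decreasing, both $A(t):=\sum_i V_i(t)$ and $a(t):=\sum_i\mathbb{E}[V_i(t)]$ are non-decreasing. This is the key structural fact: it lets us trade the supremum over $t\in[0,T]$ for values on a finite grid, plus an oscillation term that is controlled by the endpoint quantities.

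\textbf{Step 1 (discretisation).} Fix a partition $0=s_0<s_1<\cdots<s_m=T$. For $t\in[s_{k},s_{k+1}]$, monotonicity gives
\[
U(s_k)-\big(a(s_{k+1})-a(s_k)\big)\;\le\; U(t)\;\le\; U(s_{k+1})+\big(a(s_{k+1})-a(s_k)\big).
\]
Hence $\sup_{t\le T}|U(t)|\le \max_k|U(s_k)|+\max_k(a(s_{k+1})-a(s_k))$. Since $a$ is a deterministic non-decreasing function, we can choose the partition to make the mesh of $a$ small, except at possible jump points of $a$. To avoid that issue, I would instead pass to a limit: first prove the bound for $\max_k|U(s_k)|$ over arbitrary finite sets uniformly in the set. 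Then use right-continuity of $U$, which follows from the cadlag paths, and monotone convergence along dyadic refinements to conclude for the full supremum.

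\textbf{Step 2 (maximal inequality over a finite grid).} For a fixed finite grid, the process $k\mapsto U(s_k)$ is not a martingale. However, for fixed $k$, $U(s_k)$ is a sum of independent centred variables. The idea is to use a symmetrisation/Lévy–Ottaviani-type argument. Alternatively, and more cleanly, write $V_i(t)=\mathbb{E}[V_i(T)\mid \mathcal{G}^i_t]-D_i(t)$, where $D_i(t):=\mathbb{E}[V_i(T)-V_i(t)\mid\mathcal{G}^i_t]\ge 0$ and $\mathcal{G}^i$ is the natural filtration of $V_i$. Then $U=\mathcal{M}-\big(\mathcal{D}-\mathbb{E}\mathcal{D}\big)$, where $\mathcal{M}(t)=\sum_i(\mathbb{E}[V_i(T)\mid\mathcal{G}^i_t]-\mathbb{E}V_i(T))$ is a martingale in the product filtration, by independence. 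For $\mathcal{M}$, apply Doob's $L^p$ inequality to reduce to $\mathbb{E}|\mathcal{M}(T)|^p=\mathbb{E}|\sum_i(V_i(T)-\mathbb{E}V_i(T))|^p$. Classical Rosenthal's inequality bounds this by $C_p(\sum_i\mathbb{E}V_i(T)^p+(\sum_i\mathbb{E}V_i(T)^2)^{p/2})$, after using $\mathbb{E}|X-\mathbb{E}X|^q\le 2^q\mathbb{E}|X|^q$ for nonnegative $X$.

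\textbf{Step 3 (the compensator part — main obstacle).} The term $\sup_t|\sum_i (D_i(t)-\mathbb{E}D_i(t))|$ is the hard part, because $D_i$ need not be monotone. I would avoid it by bounding crudely. Since $0\le D_i(t)\le \mathbb{E}[V_i(T)\mid\mathcal{G}^i_t]=:N_i(t)$, each $D_i$ is dominated by a nonnegative martingale. So
\[
\sup_t\Big|\sum_i(D_i-\mathbb{E}D_i)\Big|\le \sup_t\sum_i N_i(t)+\sum_i\mathbb{E}V_i(T).
\]
This loses centering, however, and would give an unwanted $(\sum\mathbb{E}V_i(T))^p$ term. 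So I expect the domination route to fail, and I would fall back on the Step 1 discretisation combined with Step 2 applied only at grid points. That gives $\mathbb{E}\max_k|U(s_k)|^p\le C_p\,\mathbb{E}|\mathcal{M}(T)|^p+\mathbb{E}\max_k|\mathcal{D}(s_k)-\mathbb{E}\mathcal{D}(s_k)|^p$. The remaining term still needs the same control. A cleaner alternative I would try first is the Lévy–Ottaviani inequality for the sums $\sum_{i\le n}$ in the index $i$. This does not directly handle a supremum in $t$, though. The more promising tool is the bracketing estimate from Step 1: choose grid points $s_k$ where $a$ increases by at most $\eta$, union-bound over grid points using the fixed-time Rosenthal bound, and send $\eta\to0$. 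The delicate step will be making the union bound's dependence on the number of grid points disappear. I expect this requires using the monotonicity of each $V_i$ more strongly, via chaining across dyadic levels of the increments of $a$, and this is where I anticipate most of the work.
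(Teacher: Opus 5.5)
\textbf{There is a genuine gap: the central step is left open.} Your Steps 1 and 2 are sound as far as they go. The bracketing inequality for the monotone $A$ and $a$ is correct, and the reduction to finite grids via right-continuity of $U$ is legitimate. The martingale part $\mathcal{M}$ is correctly handled by Doob's inequality plus the classical Rosenthal inequality. But the argument stops exactly where the content of the lemma lies.

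Your decomposition does not actually reduce the problem. Since $D_i=N_i-V_i$, one has $\mathcal{D}(t)-\mathbb{E}\mathcal{D}(t)=\mathcal{M}(t)-U(t)$. So an $L^p$ bound on $\sup_t|\mathcal{D}-\mathbb{E}\mathcal{D}|$ is equivalent to the statement you are trying to prove. None of your fall-backs closes it:
\begin{itemize}
\item Domination loses the centring, as you note.
\item A union bound over grid points costs a factor of the number of points.
\item Naive dyadic chaining fails. At each level the fixed-time Rosenthal bound contains $\sum_k\sum_i\mathbb{E}[(\Delta_kV_i)^p]$. This need not decrease with the level; it stays bounded below whenever the $V_i$ jump. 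So the levels do not sum.
\item L\'evy--Ottaviani arguments in $t$ are unavailable, since the $V_i$ need not have independent increments.
\end{itemize}

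\textbf{The missing idea.} You need a device that turns the supremum over $t$ into a maximum of partial sums of a martingale. This is what the paper does, in four moves.
\begin{enumerate}
\item \emph{Symmetrize.} Replace $\mathbb{E}[V_i(t)]$ by $\mathbb{E}[V_i'(t)\mid V]$ for an independent copy $V_i'$, apply Jensen, and insert independent Rademacher signs. This reduces the left side to $2^p\,\mathbb{E}[\sup_{t\le T}|\sum_i\varepsilon_iV_i(t)|^p]$.
\item \emph{Randomize time with independent uniforms $U_i$.} Because $0\le V_i(t)\le V_i(T)$ and $V_i$ is non-decreasing, $V_i(t)=\mathbb{E}[V_i(T)\mathbf{1}\{U_iV_i(T)\le V_i(t)\}\mid \varepsilon,V]$. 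By Jensen again the supremum can be moved inside the conditional expectation. Each summand is then a one-jump process: equal to $0$ before $\theta_i=\inf\{t: U_iV_i(T)\le V_i(t)\}$ and to $\varepsilon_iV_i(T)$ after.
\item \emph{Reorder.} Order the indices by $\theta_i$ via a permutation $\pi$. The supremum over $t$ is then at most $\max_k|\sum_{l\le k}\varepsilon_{\pi(l)}V_{\pi(l)}(T)|$. Conditionally on $(V,U)$ this is a martingale in $k$, because the signs are independent of the ordering.
\item \emph{Conclude.} Doob's inequality reduces everything to $\mathbb{E}|\sum_i\varepsilon_iV_i(T)|^p$, to which the ordinary Rosenthal inequality applies.
\end{enumerate}
Monotonicity and non-negativity enter exactly in the uniform representation of step 2. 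The sign symmetrization is what makes the reordered partial sums a martingale. Neither ingredient is present in your plan.
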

\begin{proof}
    Insert further independent processes $\{V_i'(\cdot)\}_{1\leq i\leq n}$ where for each $1\leq i\leq n$, $V_i'$ is distributed as $V_i$. We also insert i.i.d. Rademacher r.v's $\{\varepsilon_i\}_{1\leq i\leq n}$ Let $\mathcal{F}$ denote the $\sigma$-algebra generated by  $\{V_i(\cdot)\}_{1\leq i\leq N}$. By Jensen's and symmetrization:
    \begin{align*}
        \mathbb{E}\left[\sup_{t\leq T}\left|\sum_{i=1}^N V_i(t)-\mathbb{E}[V_i(t)]\right|^p\right]&=\mathbb{E}\left[\sup_{t\leq T}\left|\sum_{i=1}^N V_i(t)-\mathbb{E}[V_i'(t)|\mathcal{F}]\right|^p\right]\\
        &\leq \mathbb{E}\left[\sup_{t\leq T}\left|\sum_{i=1}^N V_i(t)-V_i'(t)\right|^p\right]\\
        &\leq 2^p\mathbb{E}\left[\sup_{t\leq T}\left|\sum_{i=1}^N \varepsilon_i V_i(t)\right|^p\right].
    \end{align*}
    Now, add in i.i.d. uniform r.v.'s over $[0,1]$ $\{U_i\}_{1\leq i\leq n}$ and let $\mathcal{E}$ denote the $\sigma$-algebra generated by $\{\varepsilon_i\}_{1\leq i\leq n}$. Then $V_i(t)=\mathbb{E}[V_i(T)\mathbf{1}_{U_iV_i(T)\leq V_i(t)}|\mathcal{E}\otimes \mathcal{F}]$ by the non-decreasing and non-negativity of the $V_i$ processes. Therefore, once again using Jensen's:
    \begin{align*}
        \mathbb{E}\left[\sup_{t\leq T}\left|\sum_{i=1}^N \varepsilon_i V_i(t)\right|^p\right]\leq \mathbb{E}\left[\sup_{t\leq T}\left|\sum_{i=1}^N \varepsilon_i V_i(T)\mathbf{1}_{U_i V_i(T)\leq V_i(t)}\right|^p\right].
    \end{align*}

    Let $\pi:\{1,\dots,N\}\to \{1,\dots,N\}$ be permutation respecting the first $t$ such that $U_iV_i(T)\leq V_i(t)$. In particular $\pi(1)\in \argmin_{i\in \{1,\dots,N\}}\{\min\{t\geq 0:U_iV_i(T)\leq V_i(t)\}\}$ and $\pi(j+1)\in \argmin_{i\in \{1,\dots,N\}\backslash \pi(\{1,\dots,j\})}\{\min\{t\geq 0: U_iV_i(T)\leq V_i(t)\}\}$. Define the $\sigma$-algebra $\mathcal{U}$ generated by $\{U_i\}_{1\leq i\leq n}$ and the $\sigma$-filtration $\mathcal{E}_k=\sigma\{\varepsilon_{\pi(i)}\}_{1\leq i\leq k}$. Note two important things:
    \begin{align*}
        &\sup_{t\leq T}\left|\sum_{i=1}^N \varepsilon_i V_i(T)\mathbf{1}_{U_i V_i(T)\leq V_i(t)}\right|^p=\max_{1\leq k\leq N}\left|\sum_{i=1}^k \varepsilon_{\pi(i)}V_{\pi(i)}(T)\right|^p\\
        &\sum_{i=1}^k\varepsilon_{\pi(i)}V_{\pi(i)}(T)\text{ is a }\mathcal{F}\otimes \mathcal{U}\otimes \mathcal{E}_k\text{ martingale}.
    \end{align*}
    Thus, by Doob's maximal inequality:
    \begin{align*}
        \mathbb{E}\left[\sup_{t\leq T}\left|\sum_{i=1}^N \varepsilon_i V_i(T)\mathbf{1}_{U_i V_i(T)\leq V_i(t)}\right|^p\right]\leq C_p\mathbb{E}\left[\left|\sum_{i=1}^N \varepsilon_i V_i(T)\right|^p\right].
    \end{align*}
    Applying the standard Rosenthal inequality to the last sum gives the desired result.
\end{proof}

\subsection{Proof of Lemma \ref{lemma:QuadVarBound}.}\label{sec:quad_var_bound}

\begin{proof}
We prove the bound \ref{eq:g1_bound_t} first and describe the adaptations needed to show the three other bounds in the end. By the Burkholder-Davis inequality, we get:
\[
\mathbb{E}\left[|\Gamma_{1,j}(t)|^p\right]\leq C_p  \mathbb{E}\left[ \left(\left[\Gamma_{1,j}\right]_{t}\right)^{p/2}\right].
\]
Next note that for $t>0$ we can write,
\[
[\Gamma_{1,j}]_t=\sum_{T_i\leq t} E_{i,j}(t-T_i),
\]
where $E_{i,j}(t)$ is the number of times the $i$th clones enters and exits size $j$ by time $t$. By Lemma 8 in Ahmed et al. \cite{ahmed2026}, $E_{i,j}(\infty)$ is stochastically bounded by a geometric random variable $G$ not depending on $j$. Furthermore, we may bound $E_{i,j}(t)\leq E_{i,j}(\infty)\mathbf{1}_{\zeta_{i,j}\leq s}$, where $\zeta_{i,j}$ is the first time that $Z^{(i)}_1$ reaches size $j$. Combining these factoids, for any $q>0$:
\begin{align}
    \mathbb{E}[E_{i,j}(s)^q]\leq C_q\mathbb{P}(\zeta_{i,j}\leq s)=C_q\mathbb{P}(E_{i,j}(s)\geq 1)\leq C_q\mathbb{E}[E_{i,j}(s)].\label{eq:E_i_j_higher_by_lower}
\end{align}
Denote $\overline{e}_j(s)=\mathbb{E}[E_{i,j}(s)]$. A corollary of \ref{eq:E_i_j_higher_by_lower} and Jensen's inequality is that for $q\geq 1$:
\begin{align}
    \overline{e}_j(s)^q\leq C_q \overline{e}_j(s)\label{eq:E_i_j_higher_by_lower_mean}
\end{align}
We will use these factoids to bound moments and powers of $E_{i,j}$ and $\overline{e}_j$, respectively, throughout the following proof. Trivially:
\begin{align}
    \mathbb{E}[[\Gamma_{1,j}]_t^{p/2}]\leq C_p \left(\mathbb{E}\left[\left|\sum_{T_i\leq t}(E_{i,j}(t-T_i)-\overline{e}_{j}(t-T_i))\right|^{p/2}\right]+\mathbb{E}\left[\left|\sum_{T_i\leq t}\overline{e}_{j}(t-T_i)\right|^{p/2}\right]\right).\label{eq:control_quad_vary}
\end{align}
The first term above can be bounded using Rosenthal's inequality. Since $\{T_i\}$ are $\mathcal{B}_\infty$-measurable and conditional on $\mathcal{B}_\infty$, $\{E_{i,j}(t-T_i);i\geq 1\}$ are independent random variables:
\begin{align}
    &\mathbb{E}\left[\left|\sum_{T_i\leq t}(E_{i,j}(t-T_i)-\overline{e}_{j}(t-T_i))\right|^{p/2}\right]\nonumber\\
    &\leq C\mathbb{E}\left[\sum_{T_i\leq t}\mathbb{E}[|E_{i,j}(s)-\overline{e}_{j}(s)|^{p/2}]|_{s=t-T_i}+\left(\sum_{T_i\leq t}\mathbb{E}[|E_{i,j}(s)-\overline{e}_{j}(s)|^2]|_{s=t-T_i}\right)^{p/4}\right].\label{eq:control_rosenthal_two}
\end{align}
Using \ref{eq:E_i_j_higher_by_lower} and \ref{eq:E_i_j_higher_by_lower_mean}, the first term above is bounded by:
\begin{align*}
    \mathbb{E}\left[\sum_{T_i\leq t}\mathbb{E}[|E_{i,j}(s)-\overline{e}_{j}(s)|^{p/2}]|_{s=t-T_i}\right]\lesssim \int_0^t \overline{e}_j(t-s)e^{\lambda_0 s}\,ds
\end{align*}

uniformly in $j$. Note that $\sum_{j}\overline{e}_j(s)$ is dominated by the mean number of birth and death events that occur within a single type-1 lineage by time $s$, which is $\mathcal{O}_{s\to\infty}(e^{\lambda_1 s})$. Therefore:
\begin{align*}
    \sum_{j=1}^\infty\int_0^t \overline{e}_j(t-s)e^{\lambda_0 s}\,ds\lesssim e^{\lambda_1 t}.
\end{align*}
Next, note that the second terms in \ref{eq:control_quad_vary} and \ref{eq:control_rosenthal_two} are themselves bounded by terms of the form
\begin{align}
    \mathbb{E}\left[\left|\sum_{T_i\leq t}\overline{e}_{j}(t-T_i)\right|^{q}\right],\label{eq:main_control_piece}
\end{align}
where $q=p/4$ or $q=p/2$, respectively. Firstly, by adding and subtracting $\int_0^t \overline{e}_j(t-s)(av+\mu)Z_0(s)\,ds$ and applying Burkholder-Davis:
\begin{align}
    \mathbb{E}\left[\left|\sum_{T_i\leq t}\overline{e}_{j}(t-T_i)\right|^{q}\right]\leq C_q\left(\mathbb{E}\left[\left|\sum_{T_i\leq t}\overline{e}_{j}(t-T_i)\right|^{q/2}\right]+\mathbb{E}\left[\left(\int_0^t \overline{e}_j(t-s)(av+\mu)Z_0(s)\,ds\right)^{q}\right]\right).\label{eq:first_part_control_piece}
\end{align}
In this way, to control \ref{eq:main_control_piece} for $q=p/4$ or $q=p/2$ it suffices to control \ref{eq:main_control_piece} for $q'=p/8$ or $q'=p/4$ and to control the second term in \ref{eq:first_part_control_piece}. Now onto the first term of \ref{eq:first_part_control_piece}, observe that by H\"older's inequality followed by Cauchy-Schwarz:
\begin{align}
    \mathbb{E}\left[\left|\sum_{T_i\leq t}\overline{e}_{j}(t-T_i)\right|^{q'}\right]&\leq C_1\mathbb{E}\left[\left(\sum_{T_i\leq t}\overline{e}_j(t-T_i)^{2q'}e^{2q'\lambda_0 T_i}\right)^{1/2}\left(\sum_{T_i\leq t}e^{-\frac{2q'}{2q'-1}\lambda_0 T_i}\right)^{(2q'-1)/2}\right]\nonumber\\
    &\leq C_1\mathbb{E}\left[\left(\sum_{T_i\leq t}\overline{e}_j(t-T_i)^{2q'}e^{2q'\lambda_0 T_i}\right)\right]^{1/2}\label{eq:former_term}\\
    &\cdot\mathbb{E}\left[\left(\sum_{T_i\leq t}e^{-\frac{2q'}{2q'-1}\lambda_0 T_i}\right)^{2q'-1}\right]^{1/2}.\label{eq:latter_term}
\end{align}
By Lemma \ref{lem:V_finite_moments}, term \ref{eq:latter_term} above is finite and uniformly bounded over all $t\geq 0$. We can use Campbell's Theorem for the former term \ref{eq:former_term}:
\begin{align*}
    \mathbb{E}\left[\left(\sum_{T_i\leq t}\overline{e}_j(t-T_i)^{2q'}e^{2q'\lambda_0 t}\right)\right]&\leq (av+\mu)\int_0^t \overline{e}_j(t-s)^{2q'}e^{(2q'+1)\lambda_0 s}\,ds\\
    &\lesssim te^{(2q'+1)\lambda_0 t}\overline{e}_j(t)^{2q'}.
\end{align*}
with constant uniform in $j$. As long as $q'\geq 1$, we thus have:
\begin{align}
    \sum_{j=1}^\infty \mathbb{E}\left[\left|\sum_{T_i\leq t}\overline{e}_{j}(t-T_i)\right|^{q'}\right]^{1/2}\lesssim \sqrt{t}e^{(q'+1/2)\lambda_0 t}\sum_{j=1}^\infty\overline{e}_j(t)\lesssim \sqrt{t}e^{((q'+1/2)\lambda_0+\lambda_1) t}.\label{eq:e_sum_bound}
\end{align}
We can also bound the second term in \ref{eq:first_part_control_piece} by:
\begin{align*}
    \mathbb{E}\left[\left(\int_0^t \overline{e}_j(t-s)(av+\mu)Z_0(s)\,ds\right)^{q}\right]\leq \left(\int_0^t \overline{e}_j(t-s)e^{\lambda_0 s}\,ds\right)^q(av+\mu)^q\mathbb{E}[\sup_{s\geq 0}(e^{-\lambda_0 s}Z_0(s))^q],
\end{align*}
where the expectation of the supremum is finite by Doob's maximal martingale inequality. Therefore, it remains to control:
\begin{align}
    \int_0^t \overline{e}_j(t-s)e^{\lambda_0 s}\,ds.\label{eq:final_expression_to_control}
\end{align}
The rate at which clones exit/enter size $j$ during time $[v,v+dt]$ is $P_{j-1}(v)(a+b)(j-1)dt+P_{j+1}(v)(j+1)d\cdot dt+(a+b+d)P_j(v)jdt+o_{dt\to 0^+}(dt)$. We can then bound expression \ref{eq:final_expression_to_control} by:
\begin{align*}
&\int_0^t e^{\lambda_0 s}\overline{e}_j(t-s)ds\\
&=
(av+\mu)\int_0^t\int_s^t ((a+b)P_{j-1}(v-s)(j-1)+dP_{j+1}(v-s)(j+1)+(a+b+d)P_j(v-s)j)\,dv e^{\lambda_0 s}\,ds
\end{align*}
Performing the change of variables $r=v-s$:
    \begin{align*}
        &=(av+\mu)\int_0^t\int_0^{t-s} ((a+b)P_{j-1}(r)(j-1)+dP_{j+1}(r)(j+1)+(a+b+d)P_j(r)j)\,dr e^{\lambda_0 s}\,ds\\
        &\leq\frac{(av+\mu)}{\lambda_0}\int_0^t((a+b)P_{j-1}(r)(j-1)+dP_{j+1}(r)(j+1)+(a+b+d)P_j(r)j)e^{\lambda_0(t-r)}\,dr\\
       &=\frac{a+b}{\lambda_0}(j-1)\mathbb{E}[S_{j-1}(t)]+\frac{d}{\lambda_0}(j+1)\mathbb{E}[S_{j+1}(t)]+\frac{a+b+d}{\lambda_0}j\mathbb{E}[S_{j}(t)]\leq Cj^{-\frac{\lambda_0}{\lambda_1}}e^{\lambda_0 t}.
    \end{align*}
The last inequality comes from Proposition 1 of Ahmed et al \cite{ahmed2026}. As long as $q\lambda_0/\lambda_1>1$, we then obtain:
\begin{align*}
    \sum_{j=1}^\infty\mathbb{E}\left[\left(\int_0^t \overline{e}_j(t-s)(av+\mu)Z_0(s)\,ds\right)^{q}\right]\lesssim e^{q\lambda_0t}.
\end{align*}
This proves the claim for $\Gamma_{1,j}$. For bound \ref{eq:g2_bound_t}, write:
\begin{align*}
    &H_{i,j}(u)=j\int_0^u \mathbf{1}_{Z_1^{(i)}(s)=j}\,ds,\,\overline{h}_j(u)=\mathbb{E}[H_{i,j}(u)],\\
    &D_j(t)=\sum_{T_i\leq t}(H_{i,j}(t-T_i)-\overline{h}_j(t-T_i)),\\
    &\Gamma_{2,j}(t)=(a+b)D_{j-1}(u)+dD_{j+1}(t)-(a+b+d)D_j(t).
\end{align*}
$H_{i,j}$ is meant to take the role of $E_{i,j}$, $\overline{h}_j$ is meant to take the role of $\overline{e}_j$, and so forth. It suffices to show:
\begin{align*}
    \sum_{j=1}^\infty\mathbb{E}[\sup_{t'\leq t}|D_{j+i}(t')|^{p}]\lesssim e^{(p/2)\lambda_0 t}
\end{align*}
for $i\in \{-1,0,1\}$. This bound follows from very similar argumentation as for $\Gamma_{1,j}$ with the following differences: (1) the very first application of the Burkholder-Davis inequality is skipped and (2) the first application of the Rosenthal inequality is replaced by Lemma \ref{lem:rosenthal_maximal}.

For the tail bounds \ref{eq:g1_bound_R} and \ref{eq:g2_bound_R}, it suffices to show that the bound on \ref{eq:e_sum_bound} can afford a decay in $R$ when replacing the sum from $j=1$ to $\infty$ with a sum from $j=R$ to $\infty$, for which it suffices to show:
\begin{align*}
    \sum_{j=R}^\infty \overline{e}_j(t)\leq CL(R)e^{ct},
\end{align*}
where $L(R)\to 0^+$ as $R\to\infty$. Let $E^T(t)$ denote the total number of events that have occurred to a type-1 clone (started at time $0$) by time $t$. Then:
\begin{align*}
    \overline{e}_j(t)\leq \mathbb{E}[E^T(t)\mathbf{1}_{Z_1^{(1)}(t)\geq j}]\leq \sqrt{\mathbb{E}[E^T(t)^2]\frac{\mathbb{E}[Z_1^{(1)}(t)^3]}{j^3}}\leq Cj^{-3/2}e^{\lambda_0(5/2)t}.
\end{align*}
Hence, letting $L(R)=R^{-1/2}$ and $c=5/2$ concludes the result. 
\end{proof}

\end{document}